\definecolor{IndigoBlue}{RGB}{4,35,96}
\theoremstyle{plain}
\newtheorem{thm}{Theorem}[section]
\newtheorem{lem}[thm]{Lemma}
\newtheorem{prop}[thm]{Proposition}
\newtheorem{cor}[thm]{Corollary}
\newtheorem{thm*}{Theorem}
\newtheoremstyle{note}%
  {}{}
  {}{}
  {\itshape}{.}
  { }{}
\theoremstyle{note}
\newtheorem{defn}[thm]{Definition}
\newtheorem{rmk}[thm]{Remark}
\newtheorem{eg}[thm]{Example}
\newcommand\simrightarrow{\stackrel{\textstyle\sim}{\smash{\longrightarrow}\rule{0pt}{0.3ex}}} 
\newcommand{\on}[1]{\operatorname{#1}}
\newcommand{\st}{\,\big\vert\,}
\newcommand{\Gr}{\mathrm{Gr}}
\newcommand{\TGr}[2]{T^{*}\Gr(#1,#2)}
\newcommand{\Fl}{\mathrm{Fl}}
\newcommand{\Hom}{\on{Hom}}
\newcommand{\Ext}{\on{Ext}}
\newcommand{\Sym}{\on{Sym}}
\newcommand{\End}{\on{End}}
\newcommand{\extp}{\@ifnextchar^\@extp{\@extp^{\,}}}
\def\@extp^#1{\mathop{\bigwedge\nolimits^{\!#1}}}
\newcommand{\bC}{\mathbb{C}}
\newcommand{\bR}{\mathbb{R}} 
\newcommand{\bP}{\mathbb{P}}
\newcommand{\be}{\mathbb{e}} 
\newcommand{\bbf}{\mathbb{f}} 
\newcommand{\bS}{\mathbb{S}} 
\newcommand{\bT}{\mathbb{T}} 
\newcommand{\bW}{\mathbb{W}} 
\newcommand{\GL}{\mathrm{GL}}
\newcommand{\gl}{\mathfrak{gl}} 
\newcommand{\fsl}{\mathfrak{sl}}
\newcommand{\fg}{\mathfrak{g}} 
\newcommand{\fZ}{\mathfrak{Z}} 
\newcommand{\dI}{\dot{I}}
\newcommand{\dJ}{\dot{J}}
\newcommand{\dR}{\dot{R}}
\newcommand{\ddI}{\ddot{I}}
\newcommand{\ddJ}{\ddot{J}}
\newcommand{\ddR}{\ddot{R}}
\newcommand{\cO}{\mathcal{O}} 
\newcommand{\cW}{\mathcal{W}} 
\newcommand{\cT}{\mathcal{T}}
\newcommand{\cF}{\mathcal{F}} 
\newcommand{\cV}{\mathcal{V}}
\newcommand{\cK}{\mathcal{K}}
\newcommand{\cC}{\mathcal{C}}
\newcommand{\cA}{\mathcal{A}}
\newcommand{\cG}{\mathcal{G}}
\newcommand{\la}{\lambda}
\newcommand{\te}{\theta}
\newcommand{\Db}{\mathnormal{D}^{\mathrm{b}}} 
\newcommand{\MF}{\mathrm{MF}} 
\title{Window equivalences via categorical $\fsl(2)$ actions} 
\author{Wei Tseu}
\date{19 June 2025}
\begin{document}

\begin{abstract}
We identify two distinct approaches to the derived equivalence for the stratified Mukai flop of cotangent bundles of Grassmannians---one induced by the geometric categorical $\fsl(2)$ action, and the other through the magic window category of graded matrix factorizations on the gauged Landau--Ginzburg model---via the Kn{\"o}rrer periodicity. 
\end{abstract}

\maketitle

\section{Introduction}
Let $\Gr(k,\bC^{N})$ and $\Gr(\bC^{N},k)$ be the Grassmannians of $k$-dimensional subspaces and $k$-dimensional quotients of $\bC^{N}$ respectively. 
As two symplectic resolutions of a nilpotent orbit closure $\mathfrak{N} \subset \End(\bC^{N})$ (see (\ref{eq:nil})), the cotangent bundles $\TGr{k}{\bC^{N}}$ and $\TGr{\bC^{N}}{k}$ are related by the \textit{stratified Mukai flop}. 
That is to say, by blowing up the zero sections of the two cotangent bundles, one obtains a common roof 
\begin{equation*}
    \TGr{k}{\bC^{N}} \longleftarrow  \fZ^{(0)} \longrightarrow  \TGr{\bC^{N}}{k},
\end{equation*}
where the exceptional divisor is identified with the incidence variety in the product $\Gr(k,\bC^{N})\times \Gr(\bC^{N},k)$. 
This irreducible variety $\fZ^{(0)}$ is in fact a natural correspondence; it contains the largest common open subset of $\TGr{k}{\bC^{N}}$ and $\TGr{\bC^{N}}{k}$ over the singularity $\mathfrak{N}$. 
However, it turns out \cite{Nami2}
that $\fZ^{(0)}$ is not the right place to induce a natural equivalence of the bounded derived categories of coherent sheaves 
\begin{equation}\label{eq:flopeq}
    \Db(\TGr{k}{\bC^{N}}) \simrightarrow \Db(\TGr{\bC^{N}}{k}).
\end{equation}
The correspondence $\fZ^{(0)}$ yet also appears in the fibre product of the blow-downs of the zero sections 
\begin{equation*}
    \TGr{k}{\bC^{N}} \longrightarrow \mathfrak{N}  \longleftarrow \TGr{\bC^{N}}{k},
\end{equation*}
which consists of $k+1$ equidimensional components $\fZ^{(0)},\cdots,\fZ^{(i)}, \cdots, \fZ^{(k)}$.
Here each component $\fZ^{(i)}$ is characterized by the rank of its cotangent vectors in $\mathfrak{N}$ as well as the codimension of the incidence locus in its zero section. 
For example, the last one $\fZ^{(k)} \cong \Gr(k,\bC^{N}) \times \Gr(\bC^{N},k)$ is the `deepest' component of zero cotangent vectors.  
We refer to \cite[\S 2.2]{Cautis} for a detailed description of these correspondences. 

In a series of papers \cite{CKL-Duke,CKL-sl2,Cautis},  Cautis, Kamnitzer and Licata developed the theory of geometric categorical $\fsl(2)$ actions and applied it to the above stratified Mukai flop to obtain a natural equivalence (\ref{eq:flopeq}).
We will briefly recall the construction of this categorical action in \S\ref{sec:sl2}. 
Roughly speaking, the equivalence is given by the convolution $\bT = \on{Conv}(\Theta)$ of a `Rickard complex', which is of the form
\[
\Theta = \left\{ \Theta^{(k)}[-k]\longrightarrow \cdots \longrightarrow \Theta^{(i)}[-i] \longrightarrow \cdots \longrightarrow \Theta^{(0)} \right\},
\]
where each $\Theta^{(i)}$ is an integral functor induced by a sheaf supported on $\fZ^{(i)}$, with $\Theta^{(0)}$ induced by $\cO_{\fZ^{(0)}}$ in particular.
The differentials in this complex are essentially defined by counits of the adjunctions arising from the  categorical $\fsl(2)$ action.
The entire complex categorifies the (Lefschetz) decomposition of the reflection element of $\mathrm{SL}(2)$, acting on the weight spaces of an irreducible representation of $\fsl(2)$ (see \cite[\S 1]{CKL-sl2}). 

This paper aims to interpret the above Rickard complex from a different geometric perspective. 
We will demonstrate how the convolution perfectly corrects $\cO_{\fZ^{(0)}}$ by mutating it into the right kernel to induce an equivalence (\ref{eq:flopeq}). 
The meaning of the term `correction' will become clear as we proceed.

Another way to understand the stratified Mukai flop is to view the cotangent bundles of Grassmannians as symplectic quotients. 
In \S\ref{sec:Nak}, we recall the definition of Nakajima quiver varieties and realize $\TGr{k}{\bC^{N}}$ and $\TGr{\bC^{N}}{k}$ as hyperk\"{a}hler quotients $\mu^{-1}(0)\sslash_{\te_{\pm}} G$.
Here $\mu:X\to \fg^{\vee}$ is the moment map associated to a symplectic representation
\[
X = \Hom(V,\bC^{N}) \oplus \Hom(\bC^{N},V)
\]
of $G = \GL(V)$, where $V$ is a $k$-dimensional vector space and $\te_{\pm}$ are the two respective stability conditions for taking the GIT quotients. 
As the moment map defines a section to the dual Lie algebra bundle $\fg^{\vee}$ over $X$, there is a superpotential function 
\[
w: X \oplus \fg \longrightarrow \bC
\]
defined on the Landau--Ginzburg model $X \oplus \fg$ via the natural pairing (\S\ref{sec:mf}). 
Then, the Kn{\"o}rrer periodicity (Theorem~\ref{thm:KP}) establishes an equivalence of triangulated categories 
\[
\Psi: \Db(\mu^{-1}(0)\sslash_{\te_{\pm}} G) \simrightarrow \MF_{G\times \bC^{\times}}(X^{\te_{\pm}\text{-ss}} \times \fg,w)
\]
to the derived category of graded matrix factorizations\footnote{For readers not familiar with matrix factorizations, it may be helpful to think of them as equivariant coherent sheaves over the singular locus $w^{-1}(0)$ modulo all perfect complexes.} on the gauged LG models.  

The magic window theory (\S\ref{sec:magic}) of Halpern-Leistner and Sam \cite{HLS} tells us that  for every choice of the window parameter $\delta$, there is a magic window subcategory (Definition~\ref{def:winmf})
\[
\cW_{\delta} \subset \MF_{G\times \bC^{\times}}(X\oplus \fg,w),
\]
such that the inclusions $\iota_{\pm}: X^{\te_{\pm}\text{-ss}} \times \fg \hookrightarrow X\oplus \fg$ induce equivalences
\[
\bW_{\delta}: \MF_{G\times \bC^{\times}}(X^{\te_{+}\text{-ss}} \times \fg,w) \xrightarrow{(\iota_{+}^{*})^{-1}} \cW_{\delta} \xrightarrow{\,\,\,\,\,\,\iota_{-}^{*} \,\,\,\,\,\,} \MF_{G\times \bC^{\times}}(X^{\te_{-}\text{-ss}}  \times \fg,w). 
\]
In the current example, there are $\mathbb{Z}$-choices of $\delta$ up to equivalences, and the corresponding magic windows are also indexed by half-open intervals $[n,n+N)$, $n \in \mathbb{Z}$; see Example~\ref{eg:window}.   
This means the magic window subcategory $\cW_{\delta} = \cW_{[n,n+N)}$ is generated by matrix factorizations whose components are
equivariant vector bundles of the form $V_{\la}\otimes \cO_{X\oplus \fg}$, where $V_{\la}$ is the irreducible representation of $G\cong \GL(k)$ of highest weight $\la = (\la_{i})_{i=1}^{k}$  satisfying $\la_{i}\in [n,n+N)$.

The main result (Theorem~\ref{thm:main}) of this paper identifies the twisted equivalence $\Psi \bT \Psi^{-1}$ with one of the window equivalences $\bW_{\delta}$. 

\begin{thm*}
    There is a window equivalence $\bW_{\delta}$ such that the following diagram of triangulated category equivalences commutes
    \begin{equation*}
        \begin{tikzcd}
            \MF_{G \times \bC^{\times}}(X^{\te_{+}\text{-ss}} \times \fg, w) \arrow[r, "\bW_{\delta}"] & 
            \MF_{G \times \bC^{\times}}(X^{\te_{-}\text{-ss}} \times \fg, w)  \\
            \Db(\TGr{k}{\bC^{N}}) \arrow[r, "\bT"] \arrow[u, "\Psi"] & 
            \Db(\TGr{\bC^{N}}{k}) \arrow[u, "\Psi"].
        \end{tikzcd}
    \end{equation*}
\end{thm*}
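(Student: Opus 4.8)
The plan is to recognize $\Psi\,\bT\,\Psi^{-1}$ and $\bW_{\delta}$ as two presentations of the \emph{same} iterated mutation through the Kempf--Ness strata of the gauged Landau--Ginzburg model $[X\oplus\fg/G]$. On one side, the magic window theory of \cite{HLS}, applied to the single wall $\te=0$ of the GIT picture for the $G=\GL(V)$-action, presents $\bW_{\delta}$ as an alternating composition of grade-restriction (``mutation'') functors, one for each Kempf--Ness stratum of $X\oplus\fg$ along that wall; these strata are naturally indexed by the rank $i$ ($0\le i\le k$) of the endomorphism they carry, the $i=0$ one giving the ``identity'' mutation and the $i\ge 1$ ones, supported in the $\te_{+}$-unstable locus, giving the genuine corrections. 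On the other side, the Rickard complex $\Theta$ and its convolution $\bT=\on{Conv}(\Theta)$ present the $\fsl(2)$-equivalence as an iterated cone on the kernels $\Theta^{(i)}$ on $\fZ^{(i)}$, with $\Theta^{(0)}=\cO_{\fZ^{(0)}}$ and the higher ones supported over the deeper strata of $\mathfrak{N}$. I would match these term by term after transport through $\Psi$.

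The first step is the geometric dictionary. A point of the $i$-th Kempf--Ness stratum carries precisely a pair of (generalized) flags of $\bC^{N}$ together with a compatible rank-$i$ endomorphism --- the data defining $\fZ^{(i)}$ --- so under the Kn{\"o}rrer periodicity of Theorem~\ref{thm:KP}, applied transversally to that stratum, the ``Koszul'' kernel of the $i$-th mutation functor is matched with $\cO_{\fZ^{(i)}}$ twisted by the Schur bundle of tautological bundles occurring in $\Theta^{(i)}$, while the equivariant weight carried by the stratum becomes the homological shift $[-i]$. The connecting maps are canonical on both sides: in $\Theta$ they are the counits of the $\mathcal{E}^{(j)}\!\dashv\mathcal{F}^{(j)}$-adjunctions of the geometric categorical $\fsl(2)$ action, and in the window presentation they are the units and counits of the (restriction, pushforward-from-stratum) adjunctions; these agree because the $\mathcal{E}$- and $\mathcal{F}$-correspondences are the ambient-stack avatars of the Kempf--Ness loci. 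Granting this, the convolution of the transported complex is an object of $\MF_{G\times\bC^{\times}}(X\oplus\fg,w)$ whose restriction $\iota_{-}^{*}$ is $\Psi\,\bT\,\Psi^{-1}$ by construction, and whose restriction $\iota_{+}^{*}$ is the identity, as only the $i=0$ term survives there and $\cO_{\fZ^{(0)}}$ transports to the diagonal on the $\te_{+}$-semistable locus --- the Kn{\"o}rrer-periodic incarnation of the fact that $\fZ^{(0)}$ carries the largest common open subset over $\mathfrak{N}$.

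It remains to check that this transported convolution lies in $\cW_{\delta}$ for an honest $\delta$, which is the weight estimate: its terms are built from Schur functors of the tautological bundles of rank at most $N$ on the lifts $\widetilde{\fZ}^{(i)}$, and the Lefschetz structure of the $\fsl(2)$-representation categorified by $\Theta$ confines all occurring $\GL(V)$-weights $\la=(\la_{i})$ --- once the shifts $[-i]$ are accounted for --- to a single half-open interval $[n,n+N)$ of width $N$, the dimension of the framing space; this is the promised matching of $\delta$ with one of the windows of Example~\ref{eg:window}. With window membership in hand, the transported convolution must coincide with the lift $(\iota_{+}^{*}|_{\cW_{\delta}})^{-1}$, by uniqueness of the window object restricting to a prescribed one on the $\te_{+}$-side; hence $\iota_{-}^{*}$ of it, namely $\Psi\,\bT\,\Psi^{-1}$, is $\bW_{\delta}$, which is the asserted commutativity.

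The main obstacle is the dictionary of the second paragraph: proving that Kn{\"o}rrer periodicity intertwines the sheaf-level convolution $\on{Conv}(\Theta)$ with the matrix-factorization-level composition of mutation functors --- a base-change compatibility for matrix factorizations along the Kempf--Ness strata --- while bookkeeping the several equivariant gradings through it. A more computational alternative avoids kernels: the Schur bundles $\bS^{\mu}$ of the tautological sub-bundle of $\Gr(k,\bC^{N})$, pulled back from the zero section for $\mu$ in the $k\times(N-k)$ box, form a generating collection matched by $\Psi$ with the window generators $V_{\la}\otimes\cO$, so one may instead evaluate $\bT$ on each such bundle directly from the Rickard complex --- using the known action of the divided powers $\mathcal{E}^{(j)},\mathcal{F}^{(j)}$ on Schur bundles --- and check the answer is the corresponding Schur bundle of $\Gr(\bC^{N},k)$; agreement on a generating collection then upgrades to agreement of equivalences. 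Either way the weight/Lefschetz computation is the step demanding real care.
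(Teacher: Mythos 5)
Your overall strategy is the one the paper follows: extend the convolved Rickard kernel to $(X^{+}_{k}\times\gl_{k})\times(X_{k}\oplus\gl_{k})$, check that it restricts to the diagonal over the $\te_{+}$-semistable locus and that its essential image lies in a window subcategory, and conclude by uniqueness of the window lift. The identification of the Rickard differentials with the counits of Kempf--Ness mutations is also the paper's central mechanism (Proposition~\ref{prop:main}). However, both load-bearing steps of your argument have genuine gaps as written. First, the window-membership step: you propose a weight/Lefschetz estimate confining "all occurring weights" of the transported complex to a width-$N$ interval. This cannot work term by term, because the individual kernels $\Psi\Theta^{\{i\}}\Psi^{-1}$ (in particular $\cO_{\fZ^{(0)}}$ itself) do \emph{not} satisfy the window condition --- that failure is precisely why the higher terms of the Rickard complex are needed. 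Only the convolution lands in the window, and proving that requires showing that the mutation functors applied to the terms produce copies of $p_{*}\cO_{\ddI_{k-i}}$ indexed by $H^{*}(\Gr(i-l,i))$ which cancel pairwise in the Postnikov system, leaving exactly the counit triangle. The paper then converts the resulting width-$k$ grade restriction on the \emph{kernel} (Corollary~\ref{cor:resoln}) into the width-$N$ window for the \emph{image} via a Lascoux resolution of the pushforward; these are two different windows and your proposal conflates them.

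Second, your dictionary identifying the $\be,\bbf$-adjunction counits with the stratum-restriction counits is asserted ("ambient-stack avatars of the Kempf--Ness loci") rather than proved. The paper's substitute is a rigidity argument: the space of equivariant degree-matching maps $d_{i}$ between consecutive kernels is one-dimensional (Remark~\ref{rmk:d!}), so the transported Rickard differential and the mutation counit must agree up to scalar --- and this computation is run not on the Kempf--Ness stratification of $X\oplus\fg$, as you propose, but on that of the auxiliary quasi-symmetric kernel space $\ddR_{k}$ with respect to the second $\GL(V_{k}^{\prime})$-factor, which is where the kernels actually live. Your "computational alternative" also fails: by Corollary~\ref{cor:inv} the Schur bundles preserved by $\bT^{\prime}$ have weights only in $[-1,N-2k]$, a proper subset of Kapranov's collection for $k\geq2$, so $\bT$ does not act diagonally on your proposed generating collection, and in any case agreement of two functors on the objects of a generating set does not by itself give an isomorphism of functors.
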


Note that the ambient space $X\oplus \fg$ is much simpler than the hyperk\"{a}hler quotients, so it is reasonable to translate $\bT$  by the Kn{\"o}rrer periodicity $\Psi$ and compare it to the window equivalences over the LG models. 

In \S\ref{sec:Hecke}, we recall the definition of Hecke correspondences,  which are the spaces supporting the kernels of the defining integral functors $\be, \bbf$ of the categorical $\fsl(2)$ action. 
We also write down the LG models and the Kn{\"o}rrer periodicity for these Hecke correspondences.
After stating two corollaries of the main theorem in \S\ref{sec:main}, we find the matrix factorization kernels that induce the corresponding twisted functors $\Psi \be \Psi^{-1}$, $\Psi \bbf \Psi^{-1}$ and $\Psi \Theta \Psi^{-1}$ in the next subsection \S\ref{sec:kernel}. 

The remaining three subsections are devoted to the proof of the main theorem. 
In \S\ref{sec:1}, we first extend the kernels of $\Psi \Theta \Psi^{-1}$ naturally
from $(X^{\te_{+}\text{-ss}}\times \fg) \times (X^{\te_{-}\text{-ss}}\times \fg)$ to 
$(X^{\te_{+}\text{-ss}}\times \fg) \times (X \oplus \fg)$
along the inclusion $\on{id}\times \iota_{-}$.
Next, we verify that the two conditions that uniquely characterize a kernel inducing the window equivalence $\bW_{\delta}$ are satisfied by the convolution of this complex of extended kernels. 
Relying on Corollary~\ref{cor:resoln} from the end, this concludes the proof of the main theorem.  
The corollary then follows from the grade restriction rule for the convolution kernel, which is discussed in the last two subsections. 
We recall the setup of grade restriction windows in \S\ref{sec:2} and verify the grade restriction rule for our kernels in \S\ref{sec:3}.
The highlight is Proposition~\ref{prop:main}, where we demonstrate how the convolution of the Rickard complex iteratively eliminates the parts that are outside of the grade restriction window via a Gram--Schmidt type algorithm, and ultimately results in an object inside the window. 

The takeaway is that the kernel $\cO_{\fZ^{(0)}}$, translated by the Kn{\"o}rrer periodicity into the matrix factorization context, satisfies the expected window condition only after being corrected by the Rickard complex.

\subsection*{Acknowledgments}
I am deeply indebted to Travis Schedler and Ed Segal; without their help and support, I would not have survived this project. 
Thanks also go to Richard Thomas for his comments on an earlier draft and encouragement, and Daniel Halpern-Leistner and Yukinobu Toda for kindly answering my questions via email.
This work was supported by the Engineering and Physical Sciences Research Council [EP/S021590/1]; The EPSRC Centre for Doctoral Training in Geometry and Number Theory (The London School of Geometry and Number Theory), University College London. 

\section{Preliminaries}

All functors are derived and all sheaves are considered equivariant in this paper. 

\subsection{Nakajima quiver varieties}\label{sec:Nak}
The quiver of interest in this paper consists of a single vertex and no edges. 
For $k,N\in\mathbb{N}$, consider the following space of double framed quiver representations
\begin{equation*}
    X_{k} = \Hom(V_{k},\bC^{N})\oplus \Hom(\bC^{N}, V_{k}),
\end{equation*}
where $V_{k}$ is a vector space of dimension $k$ and $\bC^{N}$ is the fixed framing.
The space $X_{k}$ is a symplectic representation of the gauge group 
\begin{equation*}
    G_{k} = \GL(V_{k})
\end{equation*}
for the action $g\cdot (a,b) = (ag^{-1}, gb)$ where $g\in G_{k}$ and $(a,b)\in X_{k}$.
For a stability parameter $\theta \in \mathbb{Z}$, the \textit{Nakajima quiver variety} \cite{Nak94} is defined as the GIT quotient 
\begin{equation*}
    \mathfrak{M}_{\theta}(k,N) = \mu^{-1}(0) \sslash_{\chi_{\theta}} G_{k},
\end{equation*}
where $\mu: X_{k} \to \mathfrak{gl}_{k}^{\vee}$ is the moment map and $\chi_{\theta}$ is the character $\det(g)^{-\theta}$ of $G_{k}$. 

Assume $k\leq N$ and take $\te_{\pm} = \pm 1$, then the semi-stable loci are described as
\begin{equation*}
    X^{\pm}_{k} = X_{k}^{\te_{\pm}\text{-ss}} = 
    \left\{ 
    (a,b)\in X_{k} \st \on{rk}(a)=k \,\,  (\text{resp.\ }\on{rk}(b)=k) 
    \right\}. 
\end{equation*}
The moment map condition $\mu(a,b) = ba =0$ further implies $a(V_{k}) \subseteq \ker(b)$. 
So, the quiver varieties $\mathfrak{M}_{\theta_{\pm}}(k,N)$ are isomorphic to the cotangent bundles $\TGr{k}{\bC^{N}}$ and $\TGr{\bC^{N}}{k}$ respectively. 
In fact, they are resolutions of the categorical quotient $\mathfrak{M}_{0}(k,N)$, which is identified with the nilpotent orbit closure
\begin{equation} \label{eq:nil}
    \mathfrak{N} = \left\{
    f=ab \in \End(\bC^{N}) \st f^{2} =0,\quad  \on{rk}(f)\leq k     \right\}.
\end{equation}

Quiver varieties with reflecting stability parameters (and dimension vectors) are also related by the Lusztig--Maffei--Nakajima~(LMN) isomorphism. 
In the current example, it is simply the canonical isomorphism
\begin{equation}\label{eq:LMN}
    \TGr{N-k}{\bC^{N}} = \mathfrak{M}_{\theta_{+}}(N-k,N) \simrightarrow \mathfrak{M}_{\theta_{-}}(k,N) =  \TGr{\bC^{N}}{k}.
\end{equation}
In this isomorphism, the embedding $a:V_{N-k} \hookrightarrow \bC^{N}$ of a point $(a,b)$ from the left-hand side gives rise to a quotient $b^{\prime}: \bC^{N}\twoheadrightarrow V_{k}$ as its cokernel.
Then, because of the moment map condition, the map $b: \bC^{N}\to V_{N-k}$ naturally factors through this quotient $V_{k}$. 
Now, if we compose the universal map $V_{k}\to V_{N-k}$ of this factorization with $a$, we obtain a map $a^{\prime}: V_{k}\to \bC^{N}$. 
The pair $(a^{\prime},b^{\prime})$ represents the image of $(a,b)$ under the isomorphism (\ref{eq:LMN}). 

\subsection{Magic window categories}\label{sec:magic}
The derived category of a GIT quotient has a nice combinatorial description via the magic window theory \cite{HLS}.  
We refer to \cite[pp.\ 238--239]{BFK} for a historical account of the idea of `windows'.
Let $G$ be a complex reductive group with Lie algebra $\fg$, and $T$ a maximal torus of $G$. 
For a dominant weight $\la\in \mathbb{X}^{*}(T)^{+}$, there is an irreducible representation $V_{\la}$ of $G$ of highest weight $\la$.

Given a symplectic representation $X$ of $G$,  denote its set of $T$-weights by $\{v_{i}\}_{i\in I}$ (counted with multiplicity) and define the zonotope
\begin{equation*}
    {\overline{\Sigma}}_{X} = \left\{
    \sum_{i\in I} c_{i} v_{i} \st c_{i}\in [-1,0] 
    \right\} \subset \mathbb{X}^{*}(T)_{\bR}. 
\end{equation*}
Following \cite[\S 2]{HLS}, we assume the Weyl-invariants $\mathbb{X}^{*}(T)^{W}_{\bR}$ are not contained in any linear hyperplane that is parallel to a codimension one face of $ {\overline{\Sigma}}_{X\oplus \fg}$ (here $\fg$ is the adjoint representation of $G$), and ${\overline{\Sigma}}_{X\oplus \fg}$ linearly spans $\mathbb{X}^{*}(T)_{\bR}$. 
An element $\delta \in \mathbb{X}^{*}(T)^{W}_{\bR}$ is called \textit{generic} if the boundary of the shifted window 
\begin{equation*}
    \delta + \frac{1}{2}  {\overline{\Sigma}}_{X}
\end{equation*}
does not intersect the weight lattice $\mathbb{X}^{*}(T)$. 

\begin{rmk}
    The $G$-space $X\oplus \fg$ is quasi-symmetric in the sense of \cite[\S 2]{HLS}, and the magic window associated to it is actually a polytope $\overline{\nabla}$ (\cite[\S 2.2]{HLS}) defined by the weights of $X\oplus \fg$ (rather than of $X$). 
    However, in this case, the polytope $\overline{\nabla}_{X\oplus \fg}$ is equal to the zonotope $\overline{\Sigma}_{X}$ associated to $X$; see \cite[Remark 6.13]{HLS}. 
    As we will only consider quotients of spaces of the form $X\oplus \fg$, we also refer to $\overline{\Sigma}_{X}$ the magic window. 
\end{rmk}

\begin{defn}[\cite{HLS}]
    For a generic window $\delta + (1/2)  {\overline{\Sigma}}_{X}$, define the \textit{window subcategory} $\mathnormal{W}_{\delta}$ as the full subcategory of $\Db_{G}(X\oplus \fg )$ split-generated by $G$-equivariant vector bundles $V_{\la} \otimes \cO_{X\oplus \fg}$ for $\la\in \delta + (1/2)  {\overline{\Sigma}}_{X}$.
\end{defn}

\begin{eg}\label{eg:window}
    Consider the symplectic representation from \S\ref{sec:Nak}
    \begin{equation*}
        X_{k} = \Hom(V_{k}, \bC^{N}) \oplus \Hom(\bC^{N}, V_{k}).
    \end{equation*}
    It has weights $\{ \pm N\cdot e_{i}\}_{i=1}^{k}$ in terms of the standard basis $\{e_{i}\}_{i=1}^{k}$ of $V_{k}$. 
    In the space $\mathbb{X}^{*}(T)_{\bR} \cong \bR^{k}$, the original window is identified with the region 
    \begin{equation*}
        \frac{1}{2} {\overline{\Sigma}}_{X_{k}} = \left\{
        (x_{1},\cdots, x_{k})\in \bR^{k} \st -\frac{N}{2} \leq x_{i} \leq \frac{N}{2}, \forall i 
        \right\}. 
    \end{equation*}
    Shift it along the line $\mathbb{X}^{*}(T)_{\bR}^{W} = \{x_{1}=\cdots = x_{k}\} \cong \bR$ by a generic window parameter $\delta\in \bR$, then any dominant weight in this shifted window $\delta + (1/2){\overline{\Sigma}}_{X_{k}} $ is represented by a non-increasing sequence $\la_{1}\geq \cdots \geq\la_{k}$ of integers where each
    \begin{equation*}
        \la_{i} \in \left[ \lceil \delta - (N/2) \rceil, \lceil \delta + (N/2) \rceil \right). 
    \end{equation*}
    In this case, we also call this width $N$ half-open interval a \textit{window}. 
\end{eg}

A GIT quotient is called \textit{generic} if the semi-stable and stable loci coincide.

\begin{thm}[\cite{HLS}]
    For a generic window parameter $\delta$ and a generic stability condition $\theta$, the inclusion $\iota: (X\oplus\fg)^{\theta\text{-ss}}\hookrightarrow X\oplus \fg$ of the semi-stable locus induces an equivalence
    \begin{equation*}
        \iota^{*}: \mathnormal{W}_{\delta} \simrightarrow \Db((X\oplus\fg)^{\theta\text{-ss}}/G). 
    \end{equation*}
\end{thm}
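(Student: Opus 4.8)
The plan is to realise the magic window subcategory $\mathnormal{W}_{\delta}$ as a \emph{grade restriction window} for the Kempf--Ness stratification of the $\theta$-unstable locus of $X\oplus\fg$, and then invoke the general GIT window theorem of Halpern--Leistner recalled in \cite{HLS}. Write $Y=X\oplus\fg$ and let $Y^{u}=Y\setminus Y^{\theta\text{-ss}}$; since $\theta$ is generic, $Y^{u}$ carries a $G$-equivariant Kempf--Ness stratification into locally closed pieces $S_{\alpha}$, each of the form $S_{\alpha}=G\cdot Y^{\lambda_{\alpha}\text{-ss}}_{\lambda_{\alpha}\geq 0}$ for a one-parameter subgroup $\lambda_{\alpha}$ (up to conjugacy, a dominant cocharacter of $T$) with centre-of-mass fixed locus $Z_{\alpha}\subset Y^{\lambda_{\alpha}}$. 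Given an assignment of integers $w=(w_{\alpha})$, one forms the full subcategory
\[
\mathnormal{G}_{w}=\left\{F\in\Db_{G}(Y)\st\text{the $\lambda_{\alpha}$-weights of }F|_{Z_{\alpha}}\text{ lie in }[\,w_{\alpha},\,w_{\alpha}+\eta_{\alpha}\,)\text{ for every }\alpha\right\},
\]
where $\eta_{\alpha}$ is the weight of $\lambda_{\alpha}$ on $\det\!\bigl(N^{\vee}_{S_{\alpha}/Y}|_{Z_{\alpha}}\bigr)$. Halpern--Leistner's theorem is that $\mathnormal{G}_{w}$ is the heart of a semiorthogonal decomposition of $\Db_{G}(Y)$ whose remaining pieces are supported on the strata, and that restriction along $\iota$ induces an equivalence $\mathnormal{G}_{w}\simrightarrow\Db(Y^{\theta\text{-ss}}/G)$. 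It therefore suffices to find $w=w(\delta)$ with $\mathnormal{G}_{w(\delta)}=\mathnormal{W}_{\delta}$.

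First I would make the stratification explicit. In the case at hand (Example~\ref{eg:window}) --- the general quasi-symmetric case runs the same way, the strata being indexed by a finite arrangement of rational hyperplanes in $\mathbb{X}^{*}(T)_{\bR}$ --- $X=X_{k}$ does not involve the $\fg=\gl_{k}$ factor, and for $\theta=\theta_{+}$ the points destabilised by a dominant $\lambda$ are exactly those $(a,b)$ whose $a$ drops rank along the flag cut out by $\lambda$; up to conjugacy the relevant $\lambda_{\alpha}$ are the cocharacters $\lambda_{(r)}=(1^{r},0^{k-r})$ for $1\leq r\leq k$, with the $\theta_{-}$-case governed symmetrically by the rank of $b$. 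The one genuine computation is of $\eta_{(r)}$: one reads off the $\lambda_{(r)}$-weight decomposition of $Y=X_{k}\oplus\gl_{k}$, and because the weights of $X_{k}$ are $\{\pm N e_{i}\}$ --- symmetric about the origin, i.e.\ $X_{k}\oplus\fg$ is quasi-symmetric --- the positive and negative $\lambda_{(r)}$-eigenspaces of $X_{k}$ balance and the $\gl_{k}$-contribution to $\eta_{(r)}$ cancels (its $\pm1$ weight spaces both have multiplicity $r(k-r)$), leaving $\eta_{(r)}=rN$, which is precisely the width of the zonotope $\overline{\Sigma}_{X_{k}}$ in the $\lambda_{(r)}$-direction. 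Tracking the additive constants, the intervals $[w_{(r)},w_{(r)}+rN)$ assemble over $1\leq r\leq k$ into the shifted polytope $\delta+\tfrac{1}{2}\overline{\Sigma}_{X_{k}}$ once $\delta$ is matched to the $w_{(r)}$; this is the combinatorial heart of the proof, and it is what lets the same polytope serve both chambers $\theta_{\pm}$.

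Granting that identification, the equivalence follows by two formal moves. For $\mathnormal{W}_{\delta}\subseteq\mathnormal{G}_{w(\delta)}$ it is enough to test the generators $V_{\la}\otimes\cO_{Y}$ with $\la\in\delta+\tfrac{1}{2}\overline{\Sigma}_{X_{k}}$: the $\lambda_{(r)}$-weights of $(V_{\la}\otimes\cO_{Y})|_{Z_{(r)}}$ are the numbers $\langle\lambda_{(r)},\mu\rangle$ over the weights $\mu$ of $V_{\la}$, which lie in the interval $[\la_{k-r+1}+\dots+\la_{k},\ \la_{1}+\dots+\la_{r}]$ of length $<rN$, and the box condition $\la_{i}\in[\lceil\delta-N/2\rceil,\lceil\delta+N/2\rceil)$ places this interval inside $[w_{(r)},w_{(r)}+rN)$ --- here genericity of $\delta$ is exactly what guarantees that no weight lies on the boundary, so the half-open convention is harmless. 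For essential surjectivity, the restrictions $\iota^{*}(V_{\la}\otimes\cO_{Y})$ for $\la$ in the window are pullbacks of the tautological $\GL_{k}$-bundles on the quotient, and a Kapranov-type exceptional collection (equivalently a resolution of the diagonal) shows that enough of these already split-generate $\Db(Y^{\theta\text{-ss}}/G)$; since they lie in the image of $\mathnormal{W}_{\delta}$ and $\mathnormal{W}_{\delta}\subseteq\mathnormal{G}_{w(\delta)}$ with $\mathnormal{G}_{w(\delta)}\simrightarrow\Db(Y^{\theta\text{-ss}}/G)$ an equivalence, the restriction functor on $\mathnormal{W}_{\delta}$ is fully faithful and essentially surjective, hence an equivalence.

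The main obstacle is the middle step: proving that the collection of \emph{local} grade restriction windows, one for each Kempf--Ness stratum, is packaged simultaneously by the single \emph{global} polytope $\delta+\tfrac{1}{2}\overline{\Sigma}_{X_{k}}$, and that the very same polytope does the job for both stability chambers. This is precisely where quasi-symmetry of $X_{k}\oplus\fg$ is used, and it is the content that makes the window ``magic''; everything around it is a formal consequence of the general window theorem of \cite{HLS}.
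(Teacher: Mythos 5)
This theorem is quoted from \cite{HLS}; the paper offers no proof of it, so there is no internal argument to compare against and the relevant benchmark is the original proof of Halpern-Leistner--Sam. Your sketch reconstructs that proof's strategy faithfully: identify the magic window with a simultaneous grade restriction window for the Kempf--Ness stratification, use quasi-symmetry of $X\oplus\fg$ so that a single shifted polytope satisfies the local window condition on every stratum (and for both chambers), and note that the width $\eta_{(r)}=rN$ matches the width of $\tfrac{1}{2}\overline{\Sigma}_{X}$ in the $\lambda_{(r)}$-direction. Two points are thinner than in \cite{HLS}. First, the combinatorial lemma must be checked against \emph{all} destabilizing cocharacters arising from the wall-and-chamber structure, not only the coordinate ones $(1^{r},0^{k-r})$; in the running example the zonotope happens to be a box so these suffice, but the general quasi-symmetric statement needs the full support-function comparison between $\overline{\Sigma}$ and the conormal weights, which is exactly where genericity of $\delta$ and quasi-symmetry are consumed. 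Second, essential surjectivity in \cite{HLS} is not obtained from a Kapranov-type resolution of the diagonal but from the fact that the bundles $V_{\la}\otimes\cO$ over \emph{all} dominant $\la$ generate $\Db_{G}(X\oplus\fg)$, combined with the semiorthogonal decomposition to push generation down to the window; your appeal to an exceptional collection would only cover special examples. Neither point is a wrong turn---the architecture of the argument is the correct one.
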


\begin{cor}[\cite{HLS}]
    For a window subcategory $\mathnormal{W}_{\delta}$ and two generic stability conditions $\te_{1}, \te_{2}$, there is a categorical GIT wall-crossing equivalence 
    \begin{equation*}
        \Db((X\oplus\fg)^{\theta_{1}\text{-ss}}/G) 
        \xlongrightarrow{(\iota_{1}^{*})^{-1}} 
        \mathnormal{W}_{\delta}
        \xlongrightarrow{\,\,\,\,\,\, \iota_{2}^{*}\,\,\,\,\,\,} 
        \Db((X\oplus\fg)^{\theta_{2}\text{-ss}}/G). 
    \end{equation*}
\end{cor}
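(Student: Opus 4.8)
The plan is to deduce this directly from the preceding theorem, applied once for each of the two stability conditions. The key observation — and the only thing that needs checking — is that the window subcategory $\mathnormal{W}_{\delta}\subset\Db_{G}(X\oplus\fg)$ is intrinsic to the ambient quotient data: it is the full subcategory split-generated by the $V_{\la}\otimes\cO_{X\oplus\fg}$ with $\la\in\delta+(1/2)\overline{\Sigma}_{X}$, and this description refers only to the parameter $\delta$ and the zonotope $\overline{\Sigma}_{X}$, never to a stability condition. Likewise, genericity of $\delta$ — that the boundary of $\delta+(1/2)\overline{\Sigma}_{X}$ misses $\mathbb{X}^{*}(T)$ — is a condition on $\delta$ alone, unrelated to $\te_{1}$ or $\te_{2}$. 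So one and the same category $\mathnormal{W}_{\delta}$ will serve both quotients simultaneously.

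Concretely, first I would apply the preceding theorem with $\theta=\te_{1}$, giving an equivalence $\iota_{1}^{*}\colon\mathnormal{W}_{\delta}\simrightarrow\Db((X\oplus\fg)^{\te_{1}\text{-ss}}/G)$ induced by restriction along $\iota_{1}\colon(X\oplus\fg)^{\te_{1}\text{-ss}}\hookrightarrow X\oplus\fg$, and fix a quasi-inverse $(\iota_{1}^{*})^{-1}$. Then I would apply the theorem again with $\theta=\te_{2}$ to get $\iota_{2}^{*}\colon\mathnormal{W}_{\delta}\simrightarrow\Db((X\oplus\fg)^{\te_{2}\text{-ss}}/G)$. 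The composite $\iota_{2}^{*}\circ(\iota_{1}^{*})^{-1}$ is then an equivalence of triangulated categories $\Db((X\oplus\fg)^{\te_{1}\text{-ss}}/G)\simrightarrow\Db((X\oplus\fg)^{\te_{2}\text{-ss}}/G)$, which is exactly the wall-crossing functor asserted. When $\te_{1}$ and $\te_{2}$ lie in adjacent chambers of the stability space separated by a single wall, the two quotients are the birational models meeting along that wall — the situation the terminology refers to — but the construction makes no use of this; any two generic stability conditions work.

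Because the corollary is a formal two-fold application of the theorem, there is essentially no obstacle at this level. The genuine content lives in the theorem it invokes, and were I to prove that from scratch the hard part would be the grade-restriction (``window'') estimate: one stratifies $X\oplus\fg$ by the Kempf--Ness/Hesselink strata of the $\theta$-unstable locus and must show — using that $X\oplus\fg$ is quasi-symmetric as a $G$-representation, so that the Halpern-Leistner/Sam polytope $\overline{\nabla}_{X\oplus\fg}$ equals $\overline{\Sigma}_{X}$, together with genericity of $\delta$ — that every generator $V_{\la}\otimes\cO_{X\oplus\fg}$ with $\la\in\delta+(1/2)\overline{\Sigma}_{X}$ has weights lying in the admissible range along each stratum. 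That makes $\iota^{*}$ fully faithful on $\mathnormal{W}_{\delta}$, while essential surjectivity follows because the window split-generates the quotient category. The combinatorial input there is precisely the description of which dominant $\la$ lie in the window, as in Example~\ref{eg:window}; the corollary then just bundles two copies of this statement into one equivalence.
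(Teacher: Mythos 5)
Your proposal is correct and matches the paper's (implicit) argument: the corollary is stated without proof precisely because it is the two-fold application of the preceding theorem, composing $\iota_{2}^{*}$ with a quasi-inverse of $\iota_{1}^{*}$, and your observation that $\mathnormal{W}_{\delta}$ is defined independently of any stability condition is exactly the point that makes this composition well-defined.
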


\subsection{Matrix factorizations}\label{sec:mf}
We first recall a result that relates two kinds of derived categories of coherent sheaves, respectively over the singular locus $\mu^{-1}(0)$ and the smooth space of representations of a quiver with potential.  
This is an equivalence of triangulated categories studied mathematically by \cite{Orlov04,Orlov06, Ed, Ship, Isik}, and it is known as the (derived) Kn{\"o}rrer periodicity \cite{Hira}. 

Suppose $X$ is a smooth algebraic variety equipped with a reductive group $G$-action. 
Let $E$ be a $G$-equivariant vector bundle on $X$ and $s$ a $G$-equivariant regular section of $E$. 
Define the \textit{superpotential}  function $w$ on the total space of the dual bundle $E^{\vee}$ by the natural pairing
\begin{equation*}
    w: \on{Tot}(E^{\vee}) \longrightarrow \bC, \quad (x, v) \longmapsto \langle s(x), v \rangle, \quad x\in X, v\in E^{\vee}_{x}. 
\end{equation*}
The function $w$ is $G$-invariant, yet of weight two with respect to the squared dilation of $\bC^{\times}$ on the fibres of $\on{Tot}(E^{\vee})$. 
The pair $(\on{Tot}(E^{\vee}),w)$ is known as a \textit{gauged Landau--Ginzburg model}~(LG model). 

A \textit{graded matrix factorization} on $(\on{Tot}(E^{\vee}) ,w)$ is a pair of $(G\times \bC^{\times})$-equivariant coherent sheaves $F_{0},F_{1}$ (called \textit{components}) over $\on{Tot}(E^{\vee})$, together with two $G$-equivariant and $\bC^{\times}$-weight one maps 
\begin{equation*}
    \begin{tikzcd}        
    F_{0} \arrow[r, shift left, "d_{0}"] & F_{1} \arrow[l, shift left, "d_{1}"],
    \end{tikzcd}
\end{equation*}
such that $d_{1}d_{0} = w \cdot \on{id} = d_{0} d_{1}$. 
By adopting the (cohomological) `internal' degree shift $\langle \cdot \rangle$ on the $\mathbb{Z}$-grading, the maps are denoted equivariantly
\[
F_{0} \xlongrightarrow{d_{0}} F_{1} \langle 1 \rangle \xlongrightarrow{d_{1}} F_{0} \langle 2 \rangle. 
\]
This matrix factorization is also equivalent to a  $(G\times \bC^{\times})$-equivariant coherent sheaf $F=F_{0}\oplus F_{1}$, together with a $G$-equivariant endomorphism 
\begin{equation*}
    d_{F}= \begin{pmatrix}
        0 & d_{1} \\
        d_{0} & 0
    \end{pmatrix}: F \longrightarrow F\langle 1\rangle 
\end{equation*}
such that $d_{F}^{2}=w$; see \cite[\S 2]{Ed}. 
The pair $(F,d_{F})$ is then called a curved dg sheaf.   

We may also think of matrix factorizations as twisted $\mathbb{Z}/2$-graded complexes and consider homotopy classes of cochain maps between them. 
That is to say, for two matrix factorizations $F, F^{\prime}$, we define a differential graded structure on the following complex $\Hom^{\bullet}(F,F^{\prime})$ of $G$-equivariant maps of various $\bC^{\times}$-weights
\begin{align*}
    \Hom^{2n}(F,F^{\prime})& = \Hom(F_{0},F^{\prime}_{0}\langle 2n \rangle)\oplus \Hom(F_{1},F^{\prime}_{1}\langle 2n \rangle), \\
    \Hom^{2n+1}(F,F^{\prime})& = \Hom(F_{0},F^{\prime}_{1}\langle 2n+1 \rangle)\oplus \Hom(F_{1},F^{\prime}_{0}\langle 2n+1 \rangle),
\end{align*}
by the differential $d(f) = d_{F^{\prime}}\circ f - (-1)^{\on{deg}(f)}f \circ d_{F}$. 
By taking the 0-th cohomology group $H^{0}(\Hom^{\bullet}(F,F^{\prime}))$ as the set of morphisms, one obtains the homotopy category $K_{G\times \bC^{\times}}(\on{Tot}(E^{\vee}), w)$ of matrix factorizations. 
This is a triangulated category with shift 
\[
F[1] =  \left\{ F_{1} \langle 1 \rangle \xlongrightarrow{-d_{1}} F_{0} \langle 2 \rangle \xlongrightarrow{-d_{0}} F_{1} \langle 3 \rangle \right\},
\]
and a natural construction of cones
\begin{equation*}
    \on{Cone}\left( \phi: F \to F^{\prime} \right) = \left\{ F[1] \oplus F^{\prime}, 
    \begin{pmatrix}
        -d_{1} & 0 \\ \phi_{1} & d_{0}^{\prime}
    \end{pmatrix}, 
    \begin{pmatrix}
        -d_{0} & 0 \\ \phi_{0} & d_{1}^{\prime}
    \end{pmatrix}\right\}. 
\end{equation*}
Since a matrix factorization is a complex twisted by a usually nonzero function $w$, we no longer have the notion of its cohomology. 
Instead, a matrix factorization is called \textit{acyclic} if it is in the smallest thick subcategory of $K_{G\times \bC^{\times}}(\on{Tot}(E^{\vee}), w)$ that contains all totalizations 
of exact sequences of matrix factorizations.  
By taking the Verdier quotient of the homotopy category by the thick subcategory of acyclic matrix factorizations, one reaches the \textit{derived category of graded matrix factorizations}
\begin{equation*}
    \MF_{G\times \bC^{\times}}(\on{Tot}(E^{\vee}), w). 
\end{equation*} 
Two matrix factorizations that become isomorphic in the derived category are called \textit{quasi-isomorphic}.
Like in the derived category of coherent sheaves, functors such as the  tensor product,  pullback, and  pushforward are also defined for matrix factorizations. 
We refer to \cite{BFK-2} or \cite{Hira-2} for a formal introduction.

As $s$ is regular, we can resolve the skyscraper sheaf $\cO_{s^{-1}(0)}$ by the Koszul complex $(\extp^{\bullet}E^{\vee}, s \righthalfcup)$ over $X$. 
Pull back this complex to $\on{Tot}(E^{\vee})$ along the projection, we obtain a distinguished matrix factorization
\begin{equation}\label{eq:Kmf}
    \begin{tikzcd}        
     \extp^{\bullet} E^{\vee}: \extp^{\text{top}} E^{\vee} \arrow[r, shift left, "s \righthalfcup"] & \cdots \arrow[l, shift left, "\wedge t"] \arrow[r, shift left, "s \righthalfcup"] & E^{\vee} \arrow[l, shift left, "\wedge t"] \arrow[r, shift left, "s\righthalfcup"] & \cO_{\on{Tot}(E^{\vee})}  \arrow[l, shift left, "\wedge t"],
    \end{tikzcd} 
\end{equation}
where $t$ is the tautological section of the bundle $E^{\vee}$ on $\on{Tot}(E^{\vee})$. 
Here $s$ is equivariant while $t$ is of weight two, so $E^{\vee}$ should be shifted by $\langle -1\rangle$ to make sure the differentials in both directions are of weight one. 
Forgetting the arrows of $s \righthalfcup$ in (\ref{eq:Kmf}) gives a resolution of the zero section $\cO_{X}$, up to tensoring with $\det(E^{\vee})[-\on{rk}(E)]$. 
In other words, the sheaves $\cO_{s^{-1}(0)}$ and $\cO_{X}\otimes \det(E^{\vee})[-\on{rk}(E)]$ are quasi-isomorphic as matrix factorizations; they have the common Koszul resolution (\ref{eq:Kmf}). 

\begin{thm}[Kn{\"o}rrer periodicity]\label{thm:KP}
    The Koszul factorization (\ref{eq:Kmf}) induces an equivalence of triangulated categories  
    \begin{equation*}
        \extp^{\bullet} E^{\vee} \otimes_{\cO_{s^{-1}(0)}} (-): \Db_{G}(s^{-1}(0)) \simrightarrow \MF_{G\times\bC^{\times}}(\on{Tot}(E^{\vee}), w). 
    \end{equation*}
\end{thm}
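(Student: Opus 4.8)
The plan is to realize the Kn\"orrer periodicity equivalence as an instance of the general framework identifying matrix factorization categories with derived categories of zero loci, which goes back to Isik and Shipman (and Orlov, and \cite{Ed}). First I would set up the ambient geometry carefully: over $X$ we have the total space $p:\on{Tot}(E^\vee)\to X$, and on it the tautological section $t\in\Gamma(\on{Tot}(E^\vee),p^*E^\vee)$. The key point is that $s^{-1}(0)\subset X$ is cut out by the regular section $s$ of $E$, so its structure sheaf is resolved by the Koszul complex $(\extp^\bullet E^\vee, s\righthalfcup)$; meanwhile on $\on{Tot}(E^\vee)$ the superpotential $w=\langle p^*s,t\rangle$ is exactly the contraction pairing. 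One then observes that the pullback $p^*(\extp^\bullet E^\vee)$, equipped with the \emph{two} differentials $s\righthalfcup$ and $\wedge t$, is precisely the matrix factorization (\ref{eq:Kmf}) of $w$, because $(s\righthalfcup)(\wedge t)+(\wedge t)(s\righthalfcup) = \langle s,t\rangle\cdot\on{id}=w\cdot\on{id}$ by the standard Koszul identity. This is the object that will implement the equivalence as a Fourier--Mukai-type kernel.

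Next I would define the functor in both directions and check they are mutually inverse. In one direction, given $\mathcal{G}\in\Db_G(s^{-1}(0))$, we pull it back to $\on{Tot}(E^\vee)$ (or rather along the zero section inclusion, thinking of $s^{-1}(0)$ inside $\on{Tot}(E^\vee)$ via $X$) and tensor with the Koszul factorization over $\cO_{s^{-1}(0)}$; since $\extp^\bullet E^\vee$ is a locally free resolution of $\cO_{s^{-1}(0)}$ as an $\cO_{\on{Tot}(E^\vee)}$-module, this tensor product is automatically derived and lands in $\MF_{G\times\bC^\times}(\on{Tot}(E^\vee),w)$. In the other direction one restricts a matrix factorization to the zero fibre $w^{-1}(0)$ and takes cohomology, or equivalently applies $p_*$ after suitable truncation — the cleanest route is to cite the already-established general equivalence from \cite{Isik, Ship, Ed} directly, and only verify that our Koszul object is the one implementing it. The two composites are identity because $\extp^\bullet E^\vee\otimes_{\cO_{s^{-1}(0)}}\extp^\bullet E^\vee\simeq \cO_{s^{-1}(0)}$ in the appropriate derived sense (diagonal argument), and because the pushforward of the Koszul factorization along $p$ recovers $\cO_{s^{-1}(0)}$.

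The main obstacle I expect is not the construction but the bookkeeping of the two gradings — the $G$-equivariance and the $\bC^\times$-weight (internal cohomological degree) — so that the differentials $s\righthalfcup$ and $\wedge t$ both have $\bC^\times$-weight one. As the excerpt already notes, $s$ is equivariant of $\bC^\times$-weight zero while $t$ has weight two, so one must insert the shift $E^\vee\langle-1\rangle$ in the $j$-th Koszul term to make $\extp^j E^\vee\langle -j\rangle$ carry the right weight; getting this shift consistent with the $\langle 2\rangle$-periodicity $d_1d_0=d_0d_1=w\cdot\on{id}$ and with the claimed quasi-isomorphism $\cO_{s^{-1}(0)}\simeq\cO_X\otimes\det(E^\vee)[-\on{rk}E]$ requires care. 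Since the regularity of $s$ is given, flatness/Tor-vanishing issues do not arise, so the essential content is: (i) identify (\ref{eq:Kmf}) as a genuine matrix factorization of $w$ via the Koszul identity, (ii) invoke the Isik--Shipman--type theorem that tensoring with such a Koszul factorization is an equivalence, and (iii) track the gradings. I would present the argument by reducing to the known statement rather than reproving it from scratch, emphasizing step (i) and the equivariant shift conventions as the only genuinely new bookkeeping.
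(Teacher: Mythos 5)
Your proposal is correct and matches what the paper actually does: Theorem~\ref{thm:KP} is not reproved but reduced to the established equivalence of Isik--Shipman (see Remark~\ref{rmk:kno}, which realizes the functor as $i_{*}p^{*}$ via linear Koszul duality and cites \cite{Isik} and \cite[\S 2.3]{Toda}), with the only genuine checks being the Koszul anticommutator identity making (\ref{eq:Kmf}) a factorization of $w$ and the $\langle -1\rangle$ weight shift on $E^{\vee}$. Your emphasis on exactly these two points is the same route the paper takes.
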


\begin{rmk}\label{rmk:kno}
    This tensor product is taken over dg schemes (the derived fibre $s^{-1}(0)$), transforming graded dg modules over the dg algebra 
    \[
    \Sym\left( \underline{0} \longrightarrow E^{\vee} 
    \xrightarrow{-s^{\vee}} \cO_{X} \right)
    \]
    into that of $\Sym( \cO_{X} \xrightarrow{s} \underline{E} 
    )$ via Mirkovi\'{c}--Riche's linear Koszul duality. 
    See \cite{Isik} for the full statement in terms of the singularity categories and \cite[\S 2.3]{Toda} for the curved dg version. 
    In practice, the Kn{\"o}rrer periodicity is given by the  composition $i_{*}\circ p^{*}$ of the direct and inverse image functors along the bundle projection $p$ and the obvious embedding $i$ as shown in the diagram (see, e.g., \cite{Hira})
    \begin{equation*}
    \begin{tikzcd}
        \MF_{G\times \bC^{\times}}(\on{Tot}(E^{\vee})|_{s^{-1}(0)}, 0) \arrow[r,"i_{*}"] & \MF_{G\times\bC^{\times}}(\on{Tot}(E^{\vee}), w) \\
        \MF_{G\times \bC^{\times}}(s^{-1}(0), 0) = \Db_{G}(s^{-1}(0)) \arrow[u,"p^{*}"].
    \end{tikzcd}
    \end{equation*}
    In the bottom line, a bounded complex of coherent sheaves is viewed as a matrix factorization of zero potential, with its original cohomological degree transferred into the internal degree (i.e.\ the $\mathbb{Z}$-grading). 
\end{rmk}

The following lemma tells the functoriality of the direct and inverse image functors with respect to the Kn{\"o}rrer periodicity. 
It will only be used later.

\begin{lem}[{\cite[\S 2.4]{Toda}}]\label{lem:fun}
    Let $(X_{i}, G_{i}, E_{i}, s_{i},w_{i}), i=1,2$ be two tuples of the above geometric data of an LG model. 
    Suppose there is a commutative diagram of equivariant morphisms
    \begin{equation*}
        \begin{tikzcd}
            E_{1} \arrow[r, "F"] \arrow[rd, "p"] & f^{*}E_{2} \arrow[r, "f"] \arrow[d, "p"] & E_{2} \arrow[d,shift left=1, "p"] \\
            & X_{1} \arrow[r, "f"] \arrow[lu, bend left, "s_{1}"]  & X_{2}  \arrow[u, shift left=1, "s_{2}"]
        \end{tikzcd}
    \end{equation*}
     such that $f$ is equivariant with respect to an algebraic group morphism $G_{1}\to G_{2}$, and $F$ is equivariant for the $G_{1}$-equivariant structure on $f^{*}E_{2}$ induced by $f$. 
     Consider the corresponding morphisms between the LG models 
     \begin{equation*}
         (\on{Tot}(E_{1}^{\vee}), w_{1}) \xlongleftarrow{F^{\on{t}}} (\on{Tot}(f^{*}E_{2}^{\vee}), f^{*}w_{2}) \xlongrightarrow{f} (\on{Tot}(E_{2}^{\vee}), w_{2}).
     \end{equation*}
     Assume $f, F^{\on{t}}$ are proper and the stack morphism $f: [s_{1}^{-1}(0)/G_{1}] \to [s_{2}^{-1}(0)/G_{2}]$ is quasi-smooth, then the functors $f_{*}, f^{*}$ have the following functoriality 
     \begin{equation*}
         \Psi_{2} \circ f_{*} = f_{*}(F^{\on{t}})^{*} \circ \Psi_{1}, \quad \Psi_{1}\circ f^{*} = (F^{\on{t}})_{!}f^{*} \circ \Psi_{2}
     \end{equation*}
     with respect to the Kn{\"o}rrer periodicity $\Psi_{i}: \Db_{G_{i}}(s_{i}^{-1}(0))\simrightarrow \MF_{G_{i}\times \bC^{\times}}(\on{Tot}(E_{i}^{\vee}), w_{i})$. 
\end{lem}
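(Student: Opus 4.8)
The plan is to reduce both identities to base-change isomorphisms for ordinary pullback and pushforward, using the concrete description of Knörrer periodicity recalled in Remark~\ref{rmk:kno}: writing $Z_{i}:=\on{Tot}(E_{i}^{\vee})|_{s_{i}^{-1}(0)}$, each $\Psi_{i}$ is the composite $i_{i,*}\,p_{i}^{*}$, where $p_{i}\colon Z_{i}\to s_{i}^{-1}(0)$ is the (flat) bundle projection and $i_{i}\colon Z_{i}\hookrightarrow\on{Tot}(E_{i}^{\vee})$ the closed embedding, viewed as a morphism of LG models $(Z_{i},0)\to(\on{Tot}(E_{i}^{\vee}),w_{i})$. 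Write $W=\on{Tot}(f^{*}E_{2}^{\vee})\cong X_{1}\times_{X_{2}}\on{Tot}(E_{2}^{\vee})$, so that $f\colon W\to\on{Tot}(E_{2}^{\vee})$ is the second projection and $F^{\on{t}}\colon W\to\on{Tot}(E_{1}^{\vee})$ the linear bundle map over $X_{1}$ dual to $F$; put $Z_{W}=W|_{s_{1}^{-1}(0)}$, with projection $p_{W}\colon Z_{W}\to s_{1}^{-1}(0)$ and embedding $i_{W}\colon Z_{W}\hookrightarrow W$. Commutativity of the given diagram gives $F\circ s_{1}=f^{*}s_{2}$, hence $(F^{\on{t}})^{*}w_{1}=f^{*}w_{2}$; moreover $f$ carries $s_{1}^{-1}(0)$ into $s_{2}^{-1}(0)$, and $f^{*}w_{2}$ restricts to $0$ on $Z_{W}$ (because $s_{1}$, and therefore $f^{*}s_{2}=F\circ s_{1}$, is nullhomotopic on the derived zero locus $s_{1}^{-1}(0)$). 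Thus $(Z_{W},0)$ is a legitimate intermediate LG model and $F^{\on{t}},f$ restrict to $F^{\on{t}}\colon Z_{W}\to Z_{1}$ and $\bar f\colon Z_{W}\to Z_{2}$.

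The geometric input is that the squares
\begin{equation*}
    \begin{tikzcd}
        Z_{W} \arrow[r, "F^{\on{t}}"] \arrow[d, hook, "i_{W}"'] & Z_{1} \arrow[d, hook, "i_{1}"] \\
        W \arrow[r, "F^{\on{t}}"] & \on{Tot}(E_{1}^{\vee})
    \end{tikzcd}
    \qquad
    \begin{tikzcd}
        Z_{W} \arrow[r, "\bar f"] \arrow[d, "p_{W}"'] & Z_{2} \arrow[d, "p_{2}"] \\
        s_{1}^{-1}(0) \arrow[r, "f"] & s_{2}^{-1}(0)
    \end{tikzcd}
\end{equation*}
are Cartesian and Tor-independent: unwinding the derived fibre products over $X_{1}$ and $X_{2}$ identifies $Z_{W}=(F^{\on{t}})^{-1}(Z_{1})=W\times_{X_{1}}s_{1}^{-1}(0)=s_{1}^{-1}(0)\times_{s_{2}^{-1}(0)}Z_{2}$, and all the base-change isomorphisms reduce to flat ones because $W\to X_{1}$ and the projections $p_{i}$ are flat. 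Granting this, the first identity is a short chase. Base change for the left square gives $(F^{\on{t}})^{*}i_{1,*}\cong i_{W,*}(F^{\on{t}})^{*}$, and since $p_{1}\circ F^{\on{t}}=p_{W}$ on $Z_{W}$ we obtain $(F^{\on{t}})^{*}\Psi_{1}=(F^{\on{t}})^{*}i_{1,*}p_{1}^{*}\cong i_{W,*}p_{W}^{*}$. Pushing forward along $f$ and using $f\circ i_{W}=i_{2}\circ\bar f$ gives $f_{*}(F^{\on{t}})^{*}\Psi_{1}\cong i_{2,*}\bar f_{*}p_{W}^{*}$, and base change for the right square turns $\bar f_{*}p_{W}^{*}$ into $p_{2}^{*}f_{*}$, so $f_{*}(F^{\on{t}})^{*}\Psi_{1}\cong i_{2,*}p_{2}^{*}f_{*}=\Psi_{2}f_{*}$. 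The internal $\bC^{\times}$-degrees match throughout because the Koszul twist built into each $\Psi_{i}$ via~\eqref{eq:Kmf} is transported unchanged across every square.

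The second identity I would deduce from the first by passing to left adjoints: the $\Psi_{i}$ are equivalences, $f^{*}$ is left adjoint to $f_{*}$ both for coherent sheaves and for matrix factorizations, and $(F^{\on{t}})_{!}$ is the left adjoint of $(F^{\on{t}})^{*}$ (which exists since $F^{\on{t}}$ is proper and smooth). Taking left adjoints of $\Psi_{2}f_{*}=f_{*}(F^{\on{t}})^{*}\Psi_{1}$ then yields $f^{*}\Psi_{2}^{-1}=\Psi_{1}^{-1}(F^{\on{t}})_{!}f^{*}$, i.e.\ $\Psi_{1}f^{*}=(F^{\on{t}})_{!}f^{*}\Psi_{2}$; alternatively one reruns the base-change computation, using the projection formula in place of $i_{\bullet,*}p_{\bullet}^{*}$. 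Here the quasi-smoothness of $f\colon[s_{1}^{-1}(0)/G_{1}]\to[s_{2}^{-1}(0)/G_{2}]$ enters, ensuring that $f^{*}$ preserves the bounded categories $\Db_{G_{\bullet}}(s_{\bullet}^{-1}(0))$ (and properness of $f$ that $f_{*}$ does). I expect the only genuinely delicate part to be carrying this out rigorously in the curved dg / singularity-category setting---correctly identifying the derived zero loci, verifying the Tor-independence of the two squares, and tracking the potentials and internal gradings at each stage---so that the base-change and adjunction isomorphisms invoked really are available; once that machinery is set up the functoriality is formal, and this is essentially the argument of \cite[\S 2.4]{Toda}.
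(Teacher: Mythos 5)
The paper does not prove this lemma; it is imported verbatim from Toda (\S 2.4 of the cited work), so there is no internal proof to compare against. Your argument is the standard one and is essentially what Toda does: describe each $\Psi_{i}$ as $i_{*}p^{*}$, check that the two squares are derived Cartesian (both reduce to $W\times_{X_{1}}s_{1}^{-1}(0)$, resp.\ $s_{1}^{-1}(0)\times_{X_{2}}\on{Tot}(E_{2}^{\vee})$), and apply proper/flat base change; the second identity by passing to left adjoints is also fine. One small correction: a proper linear bundle map $F^{\on{t}}$ is fibrewise injective, hence a regular closed immersion rather than a smooth morphism, so the left adjoint $(F^{\on{t}})_{!}=(F^{\on{t}})_{*}(-\otimes\omega_{F^{\on{t}}}[\dim F^{\on{t}}])$ of $(F^{\on{t}})^{*}$ exists because $F^{\on{t}}$ is proper and Gorenstein (the same formula the paper uses later for the flat surjections $\Pi_{n}^{\on{t}}$), not because it is smooth.
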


Now we further assume $X$ is a symplectic representation of $G$ and consider the LG model 
\[
w: X\oplus \fg \longrightarrow \bC,
\]
where $w$ is defined from the moment map section $\mu: X \to \fg^{\vee}$ via natural pairing.

\begin{defn}[{\cite[Lemma 2.3]{PT}}] \label{def:winmf}
    For a generic window $\delta+ (1/2)  {\overline{\Sigma}}_{X}$, define the \textit{window subcategory} 
    \[
    \cW_{\delta} \subset \MF_{G\times \bC^{\times}}(X\oplus \fg, w)
    \] 
    as the full subcategory of matrix factorizations whose components are 
    direct sums of 
    equivariant vector bundles $V_{\la} \otimes\cO_{X\oplus \fg}$ for $\la\in \delta+(1/2) {\overline{\Sigma}}_{X}$. 
\end{defn}

\begin{thm}[\cite{HLS}]\label{thm:winmf}
    For a generic window parameter $\delta$ and a generic stability condition $\theta$, the restriction functor $\iota^{*}$ defines an equivalence
    \begin{equation*}
        \iota^{*}: \cW_{\delta} \simrightarrow \MF_{G\times\bC^{\times}}((X\oplus\fg)^{\theta\text{-ss}}, w).
    \end{equation*}
    As a result, given two generic stability conditions $\theta_{1},\theta_{2}$, there is a window equivalence
    \begin{equation*}
        \bW_{\delta}: \MF_{G\times\bC^{\times}}((X\oplus\fg)^{\theta_{1}\text{-ss}}, w)
        \xlongrightarrow{(\iota_{1}^{*})^{-1}} 
        \cW_{\delta} 
        \xlongrightarrow{\,\,\,\,\,\,\iota_{2}^{*}\,\,\,\,\,\,} 
        \MF_{G\times\bC^{\times}}((X\oplus\fg)^{\theta_{2}\text{-ss}},w).
    \end{equation*}    
\end{thm}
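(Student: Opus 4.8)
\emph{Proof strategy.} This is the matrix-factorization incarnation of the Halpern--Leistner--Sam magic window theorem, the coherent version of which was recalled above in \S\ref{sec:magic}; the plan is to carry that proof over to the $\MF$ setting, the point being that every functor used in it is $\cO$-linear and compatible with the superpotential, hence has a matrix-factorization avatar with the same formal properties. First I would fix the generic stability condition $\theta$ and the associated Kempf--Ness (Hesselink--Kirwan--Ness) stratification of $X\oplus\fg$, with unstable strata $S_{\alpha}$ flowing to centres $Z_{\alpha}$ under one-parameter subgroups $\lambda_{\alpha}$, ordered so that $\overline{S_{\alpha}}$ is contained in the union of $S_{\alpha}$ with the deeper strata. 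For each stratum one has, on the homotopy category of matrix factorizations, the standard package familiar from variation of GIT for matrix factorizations (\S\ref{sec:mf} and its references, together with Lemma~\ref{lem:fun}): restriction to the open complement $(X\oplus\fg)\setminus\overline{S_{\alpha}}$; a semiorthogonal-type decomposition whose ``extra'' summands are matrix factorizations supported on $Z_{\alpha}$ with the restricted superpotential $w|_{Z_{\alpha}}$; and the $\lambda_{\alpha}$-weight truncation (``baric'') functors.

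The argument then splits into two halves. For \emph{full faithfulness} of $\iota^{*}|_{\cW_{\delta}}$, I would, for $F,F'\in\cW_{\delta}$, identify the cone of the restriction map on the $\Hom^{\bullet}$-complexes with an object assembled from the local contributions along the strata $S_{\alpha}$, and show that each of these contributions vanishes because the $\lambda_{\alpha}$-weights appearing in objects of $\cW_{\delta}$ all lie in the range imposed by genericity of $\delta$. This is the step that uses quasi-symmetry of $X\oplus\fg$, via the identity $\overline{\Sigma}_{X\oplus\fg}=\overline{\Sigma}_{X}$ from the Remark above: it is exactly what lets the single interval $\delta+\tfrac12\overline{\Sigma}_{X}$ function simultaneously as a grade-restriction window for \emph{every} stratum. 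For \emph{essential surjectivity}, given $\bar F\in\MF_{G\times\bC^{\times}}((X\oplus\fg)^{\theta\text{-ss}},w)$ I would first extend it to some $\tilde F\in\MF_{G\times\bC^{\times}}(X\oplus\fg,w)$ by the standard extension of matrix factorizations along an open immersion, and then run the grade-restriction algorithm: processing the strata one at a time in the partial order, replace $\tilde F$ by an iterated cone with matrix factorizations pushed forward from the $Z_{\alpha}$ (equipped with $w|_{Z_{\alpha}}$) and suitably weight-truncated, each step killing the $\lambda_{\alpha}$-weights outside the window while leaving the restriction to $(X\oplus\fg)^{\theta\text{-ss}}$ unchanged. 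After finitely many steps the result lies in $\cW_{\delta}$ and restricts to $\bar F$. The second assertion is then purely formal: $\bW_{\delta}:=\iota_{2}^{*}\circ(\iota_{1}^{*})^{-1}$.

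The main obstacle, as usual with magic windows, is the uniformity of the window: one has to verify that for this particular representation, and for \emph{every} Kempf--Ness stratum in \emph{every} GIT chamber that occurs, the shift by $\tfrac12\overline{\Sigma}_{X}$ is precisely the half-sum of the positive $\lambda_{\alpha}$-weights on the conormal directions to $S_{\alpha}$, so that one generic $\delta$ gives a valid grade-restriction window everywhere at once. This is the combinatorial content of the quasi-symmetric hypothesis of \cite{HLS}, and is what makes the window ``magic''; the remainder is the routine but genuinely lengthy bookkeeping needed to push the vanishing and cone-building arguments through the extra $\bC^{\times}$-grading and the $\MF$-theoretic notion of acyclicity. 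One could alternatively hope to deduce the theorem from its coherent counterpart via Kn\"orrer periodicity (Theorem~\ref{thm:KP}), but Kn\"orrer identifies $\MF_{G\times\bC^{\times}}(X\oplus\fg,w)$ with $\Db_{G}(\mu^{-1}(0))$ rather than with $\Db_{G}(X\oplus\fg)$, so one would still have to check by hand that it carries $\cW_{\delta}$ onto the window subcategory inside $\Db_{G}(\mu^{-1}(0))$, which seems no easier.
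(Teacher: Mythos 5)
The paper offers no proof of this statement: it is imported verbatim from Halpern-Leistner--Sam \cite{HLS} (their magic window theorem in the Landau--Ginzburg setting), and the only related machinery the author develops himself is the grade-restriction setup of \S\ref{sec:2}--\S\ref{sec:3}, which is applied to the auxiliary quasi-symmetric model $\ddR_{k}$ for a different purpose. So there is nothing in the paper to compare your argument against line by line; what can be said is that your sketch is a faithful outline of the strategy of the cited source and of the VGIT-for-factorizations literature it rests on (Kempf--Ness stratification, semiorthogonal decompositions with pieces supported on the fixed loci $Z_{\alpha}$ with potential $w|_{Z_{\alpha}}$, $\lambda_{\alpha}$-weight truncation, and the quasi-symmetry identity $\overline{\nabla}_{X\oplus\fg}=\overline{\Sigma}_{X}$ making a single window valid for every stratum in every chamber). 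Your closing remark about Kn\"orrer periodicity not shortcutting the proof is also correct.

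Two caveats if you intend to upgrade the sketch to a proof. First, the genuinely hard half in \cite{HLS} is not full faithfulness but generation: showing that the objects $V_{\la}\otimes\cO$ with $\la\in\delta+\tfrac12\overline{\Sigma}_{X}$ split-generate everything satisfying the grade restriction rule is a combinatorial argument about the zonotope (and is where quasi-symmetry is used most heavily), not merely the ``strip strata one at a time'' algorithm, which by itself only produces \emph{some} object of the grade-restriction window restricting to $\bar F$. Second, your phrase ``the shift by $\tfrac12\overline{\Sigma}_{X}$ is precisely the half-sum of the positive $\lambda_{\alpha}$-weights on the conormal directions'' is not the precise condition; what one needs is that the $\lambda_{\alpha}$-projection of the shifted window lands inside the half-open interval of length $\eta_{\alpha}=\on{wt}_{\lambda_{\alpha}}\det N^{\vee}_{S_{\alpha}}$ for each $\alpha$, with genericity of $\delta$ ensuring no lattice point sits on the boundary. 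These are exactly the points the citation to \cite{HLS} is carrying.
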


For a generic stability condition $\theta$, the critical locus of $w$ has the same intersection with   $(X\oplus\fg)^{\theta\text{-ss}}$ and $X^{\theta\text{-ss}}\times \fg$ \cite[Lemma 5.4]{HLS}. 
Consequently, the derived categories of matrix factorizations over these two loci are equivalent \cite[Lemma 5.5]{HLS}
\begin{equation}\label{eq:critloci}
    \MF_{G\times\bC^{\times}}((X\oplus\fg)^{\theta\text{-ss}}, w) \simeq \MF_{G\times\bC^{\times}}(X^{\theta\text{-ss}}\times \fg, w). 
\end{equation}
Now, if we apply the Kn{\"o}rrer periodicity over the  $\te$-semi-stable loci
\begin{equation*}
    \Psi: \Db(\mu^{-1}(0)^{\theta\text{-ss}}/G) \simrightarrow \mathrm{MF}_{G\times\bC^{\times}}(X^{\theta\text{-ss}}\times \fg, w),
\end{equation*}
and combine it with the identification (\ref{eq:critloci}) as well as the equivalence $\bW_{\delta}$, we obtain a window equivalence for the (smooth) hyperk\"ahler quotient $\mu^{-1}(0)^{\theta\text{-ss}}/G$. 

\begin{cor}[\cite{HLS}]
    For generic parameters $\delta, \theta_{1}, \theta_{2}$ as before, there is a window equivalence
    \begin{equation*}
        \Psi^{-1} \bW_{\delta} \Psi: \Db(\mu^{-1}(0)^{\theta_{1}\text{-ss}}/G) \simrightarrow \cW_{\delta} \simrightarrow \Db(\mu^{-1}(0)^{\theta_{2}\text{-ss}}/G). 
    \end{equation*}    
\end{cor}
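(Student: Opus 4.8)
The plan is to obtain the asserted functor by splicing together the three equivalences recalled above, so that the argument is pure bookkeeping of functors; indeed the paragraph preceding the statement already outlines this composition.

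First I would run the Kn{\"o}rrer periodicity of Theorem~\ref{thm:KP} over each semi-stable locus. The relevant geometric data is the smooth $G$-variety $X^{\theta_{i}\text{-ss}}$ (an open subset of $X$), the trivial equivariant bundle $E = X^{\theta_{i}\text{-ss}}\times\fg^{\vee}$, and the section $s = \mu|_{X^{\theta_{i}\text{-ss}}}$, so that $\on{Tot}(E^{\vee}) = X^{\theta_{i}\text{-ss}}\times\fg$ and $s^{-1}(0) = \mu^{-1}(0)^{\theta_{i}\text{-ss}}$. This section is regular: $\mu^{-1}(0)^{\theta_{i}\text{-ss}}$ is cut out in the expected codimension $\dim\fg$, being a $G$-torsor over the symplectic resolution $\mathfrak{M}_{\theta_{i}}(k,N)$. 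Hence Theorem~\ref{thm:KP} provides equivalences
\begin{equation*}
    \Psi_{i}: \Db_{G}\bigl(\mu^{-1}(0)^{\theta_{i}\text{-ss}}\bigr) \simrightarrow \MF_{G\times\bC^{\times}}\bigl(X^{\theta_{i}\text{-ss}}\times\fg,\, w\bigr),\qquad i=1,2,
\end{equation*}
and, since $\theta_{1},\theta_{2}$ are generic, the GIT quotients are geometric with free $G$-action on the semi-stable loci, so $\Db_{G}(\mu^{-1}(0)^{\theta_{i}\text{-ss}}) = \Db(\mu^{-1}(0)^{\theta_{i}\text{-ss}}/G)$ — the source and target appearing in the statement.

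Next I would reconcile the two kinds of ambient space: $\Psi_{i}$ produces matrix factorizations over $X^{\theta_{i}\text{-ss}}\times\fg$, whereas the window equivalence of Theorem~\ref{thm:winmf} is stated over $(X\oplus\fg)^{\theta_{i}\text{-ss}}$. These are matched by (\ref{eq:critloci}): for generic $\theta$ the critical loci of $w$ inside the two open sets coincide, whence an equivalence of the associated matrix factorization categories (\cite[Lemmas 5.4, 5.5]{HLS}). Let $\widehat\Psi_{i}$ denote $\Psi_{i}$ post-composed with this identification. Theorem~\ref{thm:winmf} realises the magic window $\cW_{\delta}$ as equivalent, via the restriction functor $\iota_{i}^{*}$, to $\MF_{G\times\bC^{\times}}((X\oplus\fg)^{\theta_{i}\text{-ss}},w)$ for $i=1,2$, with $\bW_{\delta} = \iota_{2}^{*}\circ(\iota_{1}^{*})^{-1}$. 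Therefore the composite $\widehat\Psi_{2}^{-1}\circ\bW_{\delta}\circ\widehat\Psi_{1}$, namely
\begin{equation*}
    \Db(\mu^{-1}(0)^{\theta_{1}\text{-ss}}/G)\ \simrightarrow\ \cW_{\delta}\ \simrightarrow\ \Db(\mu^{-1}(0)^{\theta_{2}\text{-ss}}/G)
\end{equation*}
with first arrow $(\iota_{1}^{*})^{-1}\circ\widehat\Psi_{1}$ and second arrow $\widehat\Psi_{2}^{-1}\circ\iota_{2}^{*}$, is an equivalence factoring through $\cW_{\delta}$; abbreviating $\widehat\Psi_{i}$ to $\Psi$ recovers the stated $\Psi^{-1}\bW_{\delta}\Psi$. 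Every arrow here is one of the equivalences already in hand, so I do not expect a genuine obstacle; the only points calling for (routine) verification are the regularity of $\mu$ on the open loci, so that Kn{\"o}rrer periodicity applies there, and the coincidence of critical loci behind (\ref{eq:critloci}) — both supplied by \cite{HLS}.
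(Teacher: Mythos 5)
Your proposal is correct and follows essentially the same route the paper takes in the paragraph preceding the corollary: Kn\"orrer periodicity over each semi-stable locus, the identification (\ref{eq:critloci}) of matrix factorization categories via the coincidence of critical loci, and Theorem~\ref{thm:winmf}. Your added checks (regularity of $\mu$ on the semi-stable loci and the distinction between the two $\Psi$'s that the paper's notation elides) are sound and only make the sketch more complete.
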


\subsection{Lascoux resolutions}
Let $G$ be a reductive group acting on a vector space $T$, and $U\subset T$ a subspace preserved by a parabolic subgroup $P$ of $G$.
For a representation $V$ of $P$, consider the vector bundle $\cV:= G \times_{P} V$ over $G/P$ and denote its pullback to the total spaces $U\times_{P} G \subset T\times G/P$ by the same character. 
Consider 
\begin{equation*}
    \pi: U\times_{P}G \longrightarrow T, \quad (u,g)\longmapsto gu, 
\end{equation*}
which factors through 
\begin{equation*}
	\begin{tikzcd}
		U\times_{P}G \arrow[r, hook, "i"] & T \times G/P \arrow[r, "q"] & T,
	\end{tikzcd}
\end{equation*}
where $i(u,g) = (gu, gP)$ and $q$ is the projection. 
The following theorem provides a Lascoux type resolution of the direct image $\pi_{*}\cV$ (see also \cite[\S 6.1]{Wey}). 

\begin{thm}[{\cite[Theorem A.16]{DS}}]\label{thm:torsion}
    Suppose $\pi_{*}\cV$ is a sheaf concentrated in a single degree, then it has a $G$-equivariant resolution $\cF^{\bullet}$ where
    \begin{equation*}
        \cF^{-n} = \bigoplus_{r \geq n} R^{r -n}q_{*} \left(\cV \otimes \extp^{r} (T/U)^{\vee} \right).
    \end{equation*}
\end{thm}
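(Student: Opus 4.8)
The plan is to run the standard geometric technique for collapsing bundles (Kempf's method, in the form systematized by Weyman \cite{Wey}). First I would realize the correspondence $Z := U\times_{P} G$, embedded in $T\times G/P$ via $(u,g)\mapsto(gu,gP)$, as the zero scheme of a regular $G$-equivariant section. Over $G/P$ the trivial bundle $\underline{T} = T\otimes\cO_{G/P}$ contains the tautological subbundle $\mathcal{S} = G\times_{P} U$ (fibre $gU$ over $gP$), with quotient $\mathcal{Q} = G\times_{P}(T/U)$. Let $p\colon T\times G/P\to G/P$ and $q\colon T\times G/P\to T$ be the projections. Composing the tautological section of $p^{*}\underline{T}$ with the surjection $p^{*}\underline{T}\twoheadrightarrow p^{*}\mathcal{Q}$ gives a $G$-equivariant section $\sigma$ of $p^{*}\mathcal{Q}$ whose zero locus is exactly $Z$. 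Since $Z$ is smooth of codimension $\on{rk}\mathcal{Q} = \dim(T/U)$ in the smooth ambient space, $\sigma$ is a regular section, so the Koszul complex $(\extp^{\bullet}(p^{*}\mathcal{Q})^{\vee},\, \sigma\righthalfcup)$ is a $G$-equivariant locally free resolution of $\iota_{*}\cO_{Z}$, where $\iota\colon Z\hookrightarrow T\times G/P$ is the inclusion.

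Next I would tensor this Koszul resolution with the locally free sheaf $p^{*}\cV$; by the projection formula this produces a $G$-equivariant resolution of $\iota_{*}\cV$ whose $r$-th term is $p^{*}\!\bigl(\cV\otimes\extp^{r}\mathcal{Q}^{\vee}\bigr)$ in cohomological degree $-r$, and one identifies $\cV\otimes\extp^{r}\mathcal{Q}^{\vee} = G\times_{P}\bigl(V\otimes\extp^{r}(T/U)^{\vee}\bigr)$. Writing $\pi = q\circ\iota$ with $q$ proper, we get $R\pi_{*}\cV = Rq_{*}\bigl(\extp^{\bullet}(p^{*}\mathcal{Q})^{\vee}\otimes p^{*}\cV\bigr)$ in the derived category. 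Because $q$ is the base change of $G/P\to\on{pt}$ along $T\to\on{pt}$, flat base change gives $Rq_{*}p^{*}\mathcal{E} = R\Gamma(G/P,\mathcal{E})\otimes_{\bC}\cO_{T}$ for any coherent $\mathcal{E}$ on $G/P$. Feeding the Koszul resolution into the hypercohomology spectral sequence (equivalently, replacing each term by a Cartan--Eilenberg/Čech resolution and taking the total complex of $q_{*}$), I obtain, by the bookkeeping that is the core of the geometric technique, a bounded complex $\cF^{\bullet}$ of $G$-equivariant $\cO_{T}$-modules computing $R\pi_{*}\cV$, with
\[
\cF^{-n} = \bigoplus_{r\geq n} H^{r-n}\bigl(G/P,\ \cV\otimes\extp^{r}(T/U)^{\vee}\bigr)\otimes_{\bC}\cO_{T},
\]
the constraint $r\geq n$ coming from the vanishing of sheaf cohomology in negative degrees, and each summand being a free $\cO_{T}$-module twisted by the finite-dimensional $G$-representation $H^{r-n}(G/P,-)$.

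Finally I would invoke the hypothesis: $R\pi_{*}\cV$ is a sheaf concentrated in a single cohomological degree, which after the customary shift we may take to be $0$. This is exactly what forces the cohomology of $\cF^{\bullet}$ to be concentrated in degree $0$ with value $\pi_{*}\cV$, so that $\cF^{\bullet}$ is the asserted $G$-equivariant resolution. The main obstacle, and the only step requiring genuine care, is this homological bookkeeping: checking that the hypercohomology spectral sequence can be assembled into an honest complex with precisely the stated terms (rather than a complex merely quasi-isomorphic to $\pi_{*}\cV$, or one carrying spurious positive-degree terms that the single-degree hypothesis must then cancel), and verifying that $G$-equivariance is preserved throughout the Cartan--Eilenberg construction. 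Everything else — the regularity of $\sigma$, the projection formula, and cohomology-and-base-change — is routine, and one may alternatively cite \cite[\S 5]{Wey} once these inputs are in place.
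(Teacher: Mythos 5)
The paper does not prove this statement: it is quoted verbatim from the literature (Theorem A.16 of the cited reference, with a pointer to \cite[\S 5--6]{Wey}), so there is no internal argument to compare against. Your proposal is exactly the standard Kempf--Weyman ``geometric technique'' proof of that result, and it is correct in outline: realizing $U\times_{P}G$ as the zero locus of the regular section of $p^{*}\mathcal{Q}$, tensoring the Koszul resolution with $p^{*}\cV$, pushing forward along the affine projection $q$, and using flat base change. The one point you flag -- producing an honest complex with precisely the stated terms rather than just a spectral sequence -- is resolved in the standard way by working with graded free modules over $\Sym(T^{\vee})$: any bounded complex of finitely generated graded free modules admits a minimal model, and since the Koszul differential $\sigma\righthalfcup$ vanishes on the fibre $\{0\}\times G/P$, restricting to the origin kills the differential and identifies the terms of the minimal complex with $\bigoplus_{r\geq n}H^{r-n}(G/P,\cV\otimes\extp^{r}(T/U)^{\vee})$; reductivity of $G$ lets one carry out the minimization equivariantly. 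The single-degree hypothesis then guarantees the minimal complex has cohomology only in degree $0$, i.e.\ is a resolution.
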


Note that when $U\subset T$ are vector bundles over a scheme and $\pi$ is constant over that base, we can also apply this theorem relatively.

\section{The categorical action}
We continue with the notations introduced in \S\ref{sec:Nak} and Example~\ref{eg:window} and assume $2k \leq N$.  
Let $V_{k}$ be the rank $k$ tautological vector bundle on various quotients of $X_{k}$. 
A prime mark $V_{k}^{\prime}$ is used when the locus is defined by the stability condition $\theta_{-}$, for example, over the quotient $[X^{-}_{k}\times \gl_{k}/G_{k}]$ or $\TGr{\bC^{N}}{k}$.
Given a dominant weight $\la = (\la_{i})_{i=1}^{k}$, there is a vector bundle $\bS^{\la}V_{k}$ given by the Schur functor construction (see, e.g., \cite{Wey}).
In particular, the construction coincides with the $G_{k}$-equivariant vector bundle $V_{\la}\otimes \cO_{X_{k} \oplus \gl_{k}}$ over the affine space $X_{k}\oplus \gl_{k}$. 

The multiplicative  group $\bC^{\times}$ acts by squared dilation on $\gl_{k}$ and trivially on $X_{k}$. 
The Kn{\"o}rrer periodicity reads as
\begin{equation*}
    \Psi_{k}: \Db(\mu^{-1}(0)^{\theta_{\pm}\text{-ss}}/G_{k}) \simrightarrow \MF_{G_{k}\times \bC^{\times}}(X_{k}^{\pm}\times \gl_{k},w_{k}). 
\end{equation*}

\subsection{Hecke correspondences}\label{sec:Hecke}
For $0\leq m \leq n \leq N$, let
\begin{equation*}
    Z_{m,n} \subset \Hom(V_{m}, V_{n}) \oplus \Hom(V_{n}, \bC^{N}) \oplus \Hom(\bC^{N}, V_{m})
\end{equation*}
be the open subspace where the first two maps give embeddings $V_{m}\hookrightarrow V_{n} \hookrightarrow \bC^{N}$. 
Suppose $G_{m}\times G_{n}$ acts on this vector space of linear maps by change of basis, then the quotient $ Z_{m,n}/(G_{m}\times G_{n})$ is identified with the total space of the Hom bundle $\Hom(\bC^{N},V_{m})$ over the partial flag variety $\Fl(m,n;\bC^{N})$. 
The \textit{Hecke correspondence} is the closed subvariety 
\begin{equation*}
    \fZ_{m,n} \subset Z_{m,n}/(G_{m}\times G_{n})
\end{equation*} 
where $V_{n}$ is contained in the kernel of the tautological map $\bC^{N}\to V_{m}$. 
It is equipped with natural projections
\begin{equation*} 
    \TGr{m}{\bC^{N}}  \xlongleftarrow{\pi_{m}} \fZ_{m,n} \xlongrightarrow{\pi_{n}} \TGr{n}{\bC^{N}}, 
\end{equation*}
where we compose $V_{m}\to V_{n} \to \bC^{N}$ or $\bC^{N}\to V_{m} \to V_{n}$ and forget the relevant group action respectively. 
This also embeds the correspondence $\fZ_{m,n}$ into the product $\TGr{m}{\bC^{N}}\times \TGr{n}{\bC^{N}}$ as it was originally considered in \cite{Nak94,Nak98}. 

By definition, $\fZ_{m,n}$ is the vanishing locus of the composition of the tautological maps $V_{n}\hookrightarrow \bC^{N}\to V_{m}$. 
This defines a regular section to the bundle $\Hom(V_{n},V_{m})$ over $Z_{m,n}$, so we have an LG model
\begin{equation*}
    w_{m,n}: Z_{m,n}\times \Hom(V_{n},V_{m})^{\vee} \longrightarrow \bC. 
\end{equation*}
The Kn{\"o}rrer periodicity for the Hecke correspondence $\fZ_{m,n}$ then reads as 
\begin{equation*}
    \Psi_{m,n}: \Db(\fZ_{m,n}) \simrightarrow \MF_{G_{m}\times G_{n}\times \bC^{\times}}(Z_{m,n}\times \Hom(V_{n},V_{m})^{\vee}, w_{m,n}). 
\end{equation*}
The projections $\pi_{m},\pi_{n}$ also lift to equivariant maps $\Pi_{m},\Pi_{n}$ between the bundles
\begin{equation*}
    X_{m}^{+}\times \gl_{m}^{\vee} \xleftarrow{\pi_{m}} Z_{m,n}\times \gl_{m}^{\vee} \xleftarrow{\Pi_{m}}  Z_{m,n}\times \Hom(V_{n},V_{m}) \xrightarrow{\Pi_{n}} Z_{m,n}\times \gl_{n}^{\vee} \xrightarrow{\pi_{n}} X_{n}^{+}\times \gl_{n}^{\vee},
\end{equation*}
where we pre-compose or post-compose the fibres in $\Hom(V_{n},V_{m})$ with the tautological map $V_{m}\hookrightarrow V_{n}$.
The maps $\Pi_{m},\Pi_{n}$ are compatible with the regular sections, so we are in the setting of Lemma~\ref{lem:fun} and we have the corresponding transposed maps between the LG models
\begin{equation}\label{eq:LGmap}
    Z_{m,n}\times \gl_{m} \xlongrightarrow{\Pi_{m}^{\on{t}}} Z_{m,n} \times \Hom(V_{n},V_{m})^{\vee} \xlongleftarrow{\Pi_{n}^{\on{t}}} Z_{m,n} \times \gl_{n}.
\end{equation}
The morphisms $\Pi_{m}^{\on{t}}$, $\Pi_{n}^{\on{t}}$, which send an element of $\gl_{m}$ or $\gl_{n}$ to its obvious composition with the tautological map $V_{m}\hookrightarrow V_{n}$, are both flat. 
Moreover, the potentials are all compatible in the following way
$$\pi_{m}^{*} w_{m} = (\Pi_{m}^{\on{t}})^{*} w_{m,n}, \quad (\Pi_{n}^{\on{t}})^{*} w_{m,n} = \pi_{n}^{*} w_{n}. $$

For $0\leq i \leq k$, we will also consider the base change of $\fZ_{k-i,N-k}$ along the LMN isomorphism (\ref{eq:LMN})
\begin{equation}\label{eq:transport}
    \begin{tikzcd}
        \fZ_{k-i,k}^{\prime} \arrow[r,"\sim"]  \arrow[d,"\pi_{k}^{\prime}"] \arrow[dr, phantom, "\square"]
        & \fZ_{k-i,N-k} \arrow[d,"\pi_{N-k}"] \\
        \TGr{\bC^{N}}{k} \arrow[r,"\sim"] 
        & \TGr{N-k}{\bC^{N}}. 
    \end{tikzcd}
\end{equation}
Let
\[
Z_{k-i,k}^{\prime} \subset \Hom(V_{k-i},\bC^{N})\oplus\Hom(\bC^{N},V_{k}^{\prime})\oplus\Hom(V_{k}^{\prime},V_{k-i})
\]
be the open subspace where the first two maps are of full rank $V_{k-i} \hookrightarrow \bC^{N} \twoheadrightarrow V_{k}^{\prime}$.
Then, the correspondence 
\begin{equation*}
    \TGr{k-i}{\bC^{N}}  \xlongleftarrow{\pi_{k-i}} \fZ_{k-i,k}^{\prime} \xlongrightarrow{\pi_{k}^{\prime}} \TGr{\bC^{N}}{k} 
\end{equation*}
is the local complete intersection in $Z_{k-i,k}^{\prime}/(G_{k-i}\times G_{k})$ where the composition of $V_{k-i}\hookrightarrow\bC^{N}\twoheadrightarrow V_{k}^{\prime}$ vanishes.  
Again, this defines an LG model
\begin{equation*}
    w_{k-i,k}^{\prime}: Z_{k-i,k}^{\prime} \times \Hom(V_{k-i},V_{k}^{\prime})^{\vee} \longrightarrow \bC, 
\end{equation*}
and the Kn{\"o}rrer periodicity is given by
\begin{equation*}
    \Db(\fZ_{k-i,k}^{\prime}) \simrightarrow \MF_{G_{k-i}\times G_{k}\times \bC^{\times}}(Z_{k-i,k}^{\prime} \times \Hom(V_{k-i},V_{k}^{\prime})^{\vee}, w_{k-i,k}^{\prime}). 
\end{equation*}
Analogous to (\ref{eq:LGmap}), the projections $\pi_{k-i}$, $\pi_{k}^{\prime}$ lift to compatible maps $\Pi_{k-i}$, $\Pi_{k}^{\prime}$ between the Hom bundles, as well as their transposes between the LG models
\begin{equation}\label{eq:LGmap1}
    Z_{k-i,k}^{\prime} \times \gl_{k-i} \xlongrightarrow{\Pi_{k-i}^{\on{t}}} Z_{k-i,k}^{\prime} \times \Hom(V_{k-i},V_{k}^{\prime})^{\vee} \xlongleftarrow{(\Pi_{k}^{\prime})^{\on{t}}} Z_{k-i,k}^{\prime} \times \gl_{k}.
\end{equation}
Here the maps are defined by composing endomorphisms in $\gl_{k-i}$ or $\gl_{k}$ with the tautological map $V_{k}^{\prime} \to V_{k-i}$ in obvious ways. 
Note that because this tautological map can be of arbitrary rank, the morphisms $\Pi_{k-i}^{\on{t}}$, $(\Pi_{k}^{\prime})^{\on{t}}$ are in general not flat.

\subsection{The geometric categorical action}\label{sec:sl2}
Influenced by Nakajima's seminal work \cite{Nak94, Nak98}, it is natural to think of the quiver varieties $\TGr{k}{\bC^{N}}$ for $k=0,1,\cdots, N$ all together as a geometric realization of the weight spaces of an irreducible representation of (the quantized) $\fsl(2)$. 
The following functors are introduced by Cautis, Kamnitzer and Licata to categorify this geometric action. 

\begin{defn}[\cite{CKL-Duke,CKL-sl2}]\label{def:ef}
    For $0\leq m\leq n\leq N$, let
    \begin{equation*}
        \begin{tikzcd}        
        \be^{n,m}: \Db(\TGr{n}{\bC^{N}}) \arrow[r,shift left] & \Db(\TGr{m}{\bC^{N}}): \bbf^{m,n}  \arrow[l,shift left]
        \end{tikzcd}
    \end{equation*}
    be the integral functors induced by the kernels
    \begin{align*}
        \cO_{\fZ_{m,n}} &\otimes \det(\bC^{N}/V_{n})^{m-n} \otimes \det(V_{m})^{n-m},\\
        \cO_{\fZ_{m,n}} &\otimes \det(V_{n}/V_{m})^{N-m-n}
    \end{align*}
    respectively. 
    The functors are also denoted by $\be^{(n-m)}$, $\bbf^{(n-m)}$ when $n,m$ are obvious from the context.
\end{defn}
These functors satisfy the axioms of a strong categorical $\fsl(2)$ action \cite{CKL-Duke}. 
For example, up to degree shifts, the functors $\be,\bbf$ are both left and right adjoints of each other
\begin{equation*}
     \bbf^{m,n}[-(n-m)(N-n-m)] \dashv \be^{n,m}  \dashv \bbf^{m,n}[(n-m)(N-n-m)]. 
\end{equation*} 
In particular, we have the counit  
\begin{equation*}     
    \varepsilon: \bbf^{m,n}\be^{n,m}[-(n-m)(N-n-m)] \longrightarrow \on{id}.    
\end{equation*}
Moreover, powers of $\be$ or $\bbf$ decompose in the following way: for $0\leq l\leq m\leq n\leq N$, 
\begin{align*}
    \be^{m,l} \be^{n,m} &\simeq \be^{n,l}\otimes H^{*}(\Gr(n-m, n-l)), \\
    \bbf^{m,n} \bbf^{l,m} &\simeq \bbf^{l,n}\otimes H^{*}(\Gr(m-l, n-l)). 
\end{align*}
Here we always shift the degree so that the cohomology ring is symmetric with respect
to the 0th degree. For example,  
\begin{equation*}
    H^{*}(\Gr(2,4)) = \bC[4] \oplus \bC[2] \oplus \bC^{2}[0] \oplus \bC[-2] \oplus \bC[-4]. 
\end{equation*}
 
For each $0\leq i\leq k$, we consider the functors 
\begin{equation*}
    \be^{(i)} = \be^{k,k-i},\quad \bbf^{(N-2k+i)} = \bbf^{k-i,N-k},
\end{equation*}
and their composition
\begin{equation*}
    \Theta^{(i)} = \bbf^{(N-2k+i)} \be^{(i)}: \Db(\TGr{k}{\bC^{N}}) \longrightarrow \Db(\TGr{N-k}{\bC^{N}}).
\end{equation*}
A differential map
\begin{equation*}
    d_{i}: \Theta^{(i)}[-i]\longrightarrow \Theta^{(i-1)}[-(i-1)] 
\end{equation*}
can be defined in the following way
\[
\bbf^{(N-2n+i)} \be^{(i)}\langle 1 \rangle [-1] \longrightarrow \bbf^{(N-2n+i-1)} \bbf^{(1)}  \be^{(1)} \be^{(i-1)} \xlongrightarrow{\varepsilon}  \bbf^{(N-2n+i-1)} \be^{(i-1)}.
\] 
Here we first include $\be^{(i)}$ or $\bbf^{(N-2n+i)}$ into the lowest degree in the decomposition
\begin{align*}
	\be^{(1)}\be^{(i-1)} &\simeq \be^{(i)}\otimes H^{*}(\Gr(i-1,i)),\\
	\bbf^{(N-2n+i-1)} \bbf^{(1)} &\simeq \bbf^{(N-2n+i)} \otimes H^{*}(\Gr(1,N-2n+i))
\end{align*}
respectively. 
Next, we apply the counit to the two middle terms
\begin{equation*}
	\varepsilon: \bbf^{(1)}\be^{(1)} \longrightarrow \on{id}[N-2n+2i-1] \langle -N+2n -2i +1 \rangle. 
\end{equation*}
With these differentials, we form the \textit{Rickard complex} of functors
\begin{equation}\label{eq:Rickard}
    \Theta= \left\{\Theta^{(k)}[-k]\xrightarrow{d_{k}} \Theta^{(k-1)}[-(k-1)]\to \cdots \to \Theta^{(1)}[-1] \xrightarrow{d_{1}} \Theta^{(0)}\right\}. 
\end{equation}

Recall that a \textit{Postnikov system} of the complex $\Theta$ is a collection of distinguished triangles
\begin{equation} \label{eq:Pos}
    T^{(1)}[-1] \xlongrightarrow{a_{1}} \Theta^{(1)}[-1] \xrightarrow{d_{1}} \Theta^{(0)}, \quad
    T^{(i)}[-i] \xlongrightarrow{a_{i}} \Theta^{(i)}[-i] \xlongrightarrow{b_{i}} T^{(i-1)}[-i+1]
\end{equation}
such that $a_{i-1} \circ b_{i} = d_{i}$ for $2\leq i \leq k$. 
The object $T^{(k)}$ is called a (right) \textit{convolution} of the complex. 

\begin{thm}[\cite{CKL-sl2}]\label{thm:CKL}
    The Rickard complex has a unique convolution, which defines a natural equivalence 
    \begin{equation*}
        \mathbb{T} = \on{Conv}(\Theta): \Db(\TGr{k}{\bC^{N}}) \simrightarrow \Db(\TGr{N-k}{\bC^{N}})
    \end{equation*}
    for the local model of type $A$ stratified Mukai flops. 
\end{thm}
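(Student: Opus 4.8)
The assertion splits into two logically independent parts: \emph{(i)} the Rickard complex $\Theta$ of (\ref{eq:Rickard}) admits a Postnikov system (\ref{eq:Pos}), and its convolution $\bT$ is unique; and \emph{(ii)} the resulting Fourier--Mukai functor $\bT$ is an equivalence. I would treat \emph{(i)} by the standard $\Hom$-vanishing criterion for convolutions of complexes in a triangulated category, and \emph{(ii)} by producing an explicit quasi-inverse and then simplifying a composite complex down to the identity; the intertwining relations and the deformation built into a \emph{geometric} categorical $\fsl(2)$ action provide supporting tools, indicated below.

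\textbf{Part (i).} First one must check that $\Theta$ is a genuine complex, $d_{i-1}d_{i}\simeq 0$, before (\ref{eq:Pos}) is even meaningful. I would deduce this from the axioms of a strong categorical $\fsl(2)$ action: composing two consecutive differentials and using the coherence of the counit $\varepsilon\colon\bbf^{(1)}\be^{(1)}\to\on{id}$ (suitably shifted) with the associativity isomorphisms of the decompositions $\be^{(1)}\be^{(i-1)}\simeq\be^{(i)}\otimes H^{*}(\Gr(i-1,i))$ and $\bbf^{(\cdot)}\bbf^{(1)}\simeq\bbf^{(\cdot)}\otimes H^{*}(\Gr(1,\ast))$, the composite collapses to a difference of the two ways of nesting the counit on a $\bbf^{(2)}\be^{(2)}$-type factor, which cancel --- this being exactly the symmetry of $H^{*}(\Gr(2,\ast))$ about degree zero recorded after Definition~\ref{def:ef}. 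Next, writing $X^{(i)}=\Theta^{(i)}[-i]$, the standard criterion (see \cite[\S4]{CKL-sl2}, after Orlov) gives existence and uniqueness of the convolution once $\Hom(X^{(i)},X^{(j)}[\ell])=0$ for $i>j$ and $\ell$ ranging over a band of negative integers, i.e.\ $\Hom(\Theta^{(i)},\Theta^{(j)}[\ell])=0$ for $i>j$ and $-(i-j)\le\ell\le -1$. These I would compute by Fourier--Mukai adjunction, $\Hom(\Theta^{(i)},\Theta^{(j)}[\ell])\cong\Hom(\on{id},(\Theta^{(i)})^{R}\Theta^{(j)}[\ell])$, expressing the right adjoint $(\Theta^{(i)})^{R}$ again as a $\bbf$--$\be$ composite by the biadjunctions (up to shift) between $\be$ and $\bbf$, and then using the decomposition rules $\be^{(a)}\be^{(b)}\simeq\be^{(a+b)}\otimes H^{*}(\Gr(\ast,\ast))$, $\bbf^{(a)}\bbf^{(b)}\simeq\bbf^{(a+b)}\otimes H^{*}(\Gr(\ast,\ast))$ together with the categorical commutation relation to reduce the composite $(\Theta^{(i)})^{R}\Theta^{(j)}$ to a finite sum of shifts of functors of the form $\on{id}\otimes H^{*}(\Gr(\ast,\ast))$. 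Since $\Hom(\on{id},\on{id}[m])\cong\Ext^{m}(\cO_{\Delta},\cO_{\Delta})$ vanishes for $m<0$, a degree count against the symmetry of the Grassmannian cohomology rings about degree zero eliminates everything in the prescribed negative range. (Alternatively one argues geometrically from the fibre dimensions of the Hecke correspondences $\fZ^{(i)}$ and base change along (\ref{eq:transport}), but the representation-theoretic route uses only the axioms.)

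\textbf{Part (ii).} Let $\bT'$ be the convolution of the Rickard complex of functors running $\Db(\TGr{N-k}{\bC^{N}})\to\Db(\TGr{k}{\bC^{N}})$; its existence and uniqueness follow exactly as in \emph{(i)}. By the octahedral axiom applied iteratively, the composite $\bT'\bT$ is a convolution of the complex obtained by composing the two Rickard complexes termwise, whose entries are shifts of four-fold composites of the $\be^{(\cdot)}$ and $\bbf^{(\cdot)}$. I would then simplify this large complex down to $\{\on{id}\}$, up to a harmless shift or line-bundle twist, by a cascade of cancellations --- each effected by a counit of the $\be$--$\bbf$ adjunctions and detected by the decomposition isomorphisms above; conceptually this categorifies the fact that the square of the reflection in $\mathrm{SL}(2)$ acts trivially on the relevant $\fsl(2)$-representation. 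Identifying $\bT'\bT\simeq\on{id}$ (and symmetrically $\bT\bT'\simeq\on{id}$) shows $\bT$ is an equivalence. Two supporting tools: the intertwiners $\bT\be\simeq\bbf\bT$ and $\bT\bbf\simeq\be\bT$ (up to shift), which organize the cancellations; and, in the geometric setting, the deformation of $\Theta$ over the equivariant parameter to a family in which the stratified Mukai flop degenerates to a product --- there $\bT$ is manifestly an equivalence, and since invertibility is an open condition it would propagate back to the special fibre.

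\textbf{Main obstacle.} The crux is the cancellation cascade of Part~\emph{(ii)} (equivalently, making the deformation argument fully rigorous): one must control the numerous internal degree shifts, the signs introduced by the shift functor and the cone construction, and --- most delicately --- pin down into which graded summand of each $H^{*}(\Gr(\ast,\ast))$-tensor factor the composed counit maps land, so that consecutive terms cancel genuinely rather than merely match numerically. This is exactly where the full strength of the strong (and geometric) categorical $\fsl(2)$ axioms is indispensable --- the coherence of units and counits with the decomposition isomorphisms, and the curved/deformation enrichment of \cite{CKL-Duke} --- and where the bare commutator relation $[\be,\bbf]$ falls short.
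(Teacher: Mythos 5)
This theorem is imported from \cite{CKL-sl2}; the paper you are reading gives no proof of it, so there is no internal argument to compare against. Judged on its own terms, your Part~(i) is sound and is essentially what Cautis--Kamnitzer--Licata do: one checks the $\Hom$-vanishing criterion for existence and uniqueness of the convolution by writing $(\Theta^{(i)})^{R}\Theta^{(j)}$ as a sum of shifts of identities via the adjunctions, the decomposition rules for powers of $\be$ and $\bbf$, and the commutation relation.

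Part~(ii), however, contains a genuine error. The functor you propose as a quasi-inverse --- the convolution $\bT'=\bT_{N-k}$ of the Rickard complex running $\Db(\TGr{N-k}{\bC^{N}})\to\Db(\TGr{k}{\bC^{N}})$ --- does \emph{not} satisfy $\bT'\bT\simeq\on{id}$, even up to shift or line-bundle twist. The paper's own Corollary~\ref{cor:sph} computes $\Psi\,\bT_{N-k}\bT_{k}\,\Psi^{-1}\simeq\bW_{\delta+1}^{-1}\bW_{\delta}$, a window-shift autoequivalence given by nontrivial spherical twists (already for $T^{*}\bP^{1}$ this is not the identity). Relatedly, your guiding slogan is off: the square of the reflection in $\mathrm{SL}(2)$ is $-I$, which does not act trivially on weight spaces, and categorically $\bT^{2}$ is precisely this nontrivial autoequivalence. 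The correct candidate inverse is the convolution of the complex of \emph{adjoints} $\{\be^{(N-2k)}\to\bbf^{(1)}\be^{(N-2k+1)}[1]\to\cdots\to\bbf^{(k)}\be^{(N-k)}[k]\}$ (reversed arrows, positive shifts), as used in the proof of Corollary~\ref{cor:sph}; it differs from $\bT_{N-k}$ by determinant twists. Even with the right candidate, the ``cancellation cascade'' you defer to the end is the entire content of the theorem, and it is exactly where the \emph{geometric} (deformed) enrichment of the categorical $\fsl(2)$ action is indispensable rather than a ``supporting tool'': the bare strong axioms do not by themselves force the unit/counit maps of $\bT\dashv\bT^{R}$ to be isomorphisms, and CKL's argument leans essentially on the deformation data to rigidify the relevant morphism spaces. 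As written, your plan would set out to prove a false identity and, once corrected, would still leave the decisive step unexecuted.
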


We can transport the kernel of $\bbf^{(N-2k+i)}$ to $\fZ_{k-i,k}^{\prime}$ via the base change (\ref{eq:transport}). 
The isomorphic integral functor 
\begin{equation*}
    \bbf^{\{ i\}}: \Db(\TGr{k-i}{\bC^{N}}) \longrightarrow \Db(\TGr{\bC^{N}}{k})
\end{equation*}
is then induced by the transported kernel 
\begin{equation*}
    \cO_{\fZ_{k-i,k}^{\prime}}\otimes \det(\bC^{N}/V_{k-i})^{i} \otimes \det(V_{k}^{\prime})^{-i}
\end{equation*}
via the canonical isomorphism $\bC^{N}/V_{N-k} \cong V_{k}^{\prime}$.
The Rickard complex also transports to an isomorphic complex 
\begin{equation}\label{eq:Rickard'}
    \Theta^{\prime} = \left\{
    \Theta^{\{k\}}[-k]\xrightarrow{d_{k}} \Theta^{\{k-1\}}[-(k-1)]\to \cdots \to \Theta^{\{1\}}[-1] \xrightarrow{d_{1}} \Theta^{\{0 \}}
    \right\}  
\end{equation}
with $\Theta^{\{i\}} = \bbf^{\{ i\}} \be^{(i)}$,
which gives rise to an equivalence 
\begin{equation*}
    \bT^{\prime}= \on{Conv}(\Theta^{\prime}): \Db(\TGr{k}{\bC^{N}}) \simrightarrow \Db(\TGr{\bC^{N}}{k}) 
 \end{equation*}
by Theorem~\ref{thm:CKL}.

\subsection{Main results}\label{sec:main}
From now on, we fix a generic window parameter $\delta\in \bR$ such that the window (see Example~\ref{eg:window}) is given by the half-open interval 
\begin{equation*}
    \left[ \lceil \delta - (N/2) \rceil, \lceil \delta + (N/2) \rceil \right) = [-k, N-k). 
\end{equation*}
We may also replace $\delta$ with this half-open interval in the notation of the window subcategory $\cW_{[-k,N-k)}$ or the window equivalence $\bW_{[-k,N-k)}$. 

\begin{thm}\label{thm:main}
    The following diagram of triangulated category equivalences commutes
    \begin{equation*}
    \begin{tikzcd}
        \MF_{G_{k} \times \bC^{\times}}(X_{k}^{+} \times \gl_{k}, w_{k}) \arrow[r, "\bW_{\delta}"] & 
        \MF_{G_{k} \times \bC^{\times}}(X_{k}^{-} \times \gl_{k}, w_{k})  \\
        \Db(\TGr{k}{\bC^{N}}) \arrow[r, "\bT^{\prime}"] \arrow[u, "\Psi_{k}"] & 
        \Db(\TGr{\bC^{N}}{k}) \arrow[u, "\Psi_{k}"].
    \end{tikzcd}
\end{equation*}
\end{thm}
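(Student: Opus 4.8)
The plan is to realise both $\Psi_k\bT^{\prime}\Psi_k^{-1}$ and the window equivalence $\bW_\delta$ as the Fourier--Mukai functor of one and the same matrix factorization kernel, and then to appeal to the uniqueness built into the window formalism of \S\ref{sec:magic} (Theorem~\ref{thm:winmf}).

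\emph{Transporting the Rickard complex.} First I push the coherent kernels of $\be^{(i)}$ and $\bbf^{\{i\}}$ from Definition~\ref{def:ef} through Kn\"orrer periodicity. Applying the functoriality of Lemma~\ref{lem:fun} to the LG-model morphisms (\ref{eq:LGmap}) and (\ref{eq:LGmap1}) attached to the Hecke correspondences, I obtain in \S\ref{sec:kernel} an explicit matrix factorization kernel $\mathcal{K}_i$ on the product LG model over $(X_k^{+}\times\gl_k)\times(X_k^{-}\times\gl_k)$ that induces $\Psi_k\Theta^{\{i\}}\Psi_k^{-1}$. Because the transposed maps $\Pi_{k-i}^{\on{t}}$ and $(\Pi_k^{\prime})^{\on{t}}$ in (\ref{eq:LGmap1}) are not flat, the intervening pushforwards must be resolved by Lascoux-type complexes (Theorem~\ref{thm:torsion}). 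The Rickard differentials $d_i$ --- assembled from the $\fsl(2)$ counits and the decomposition maps for $\be^{(1)}\be^{(i-1)}$ and $\bbf^{(N-2k+i-1)}\bbf^{(1)}$ --- transport to morphisms $\mathcal{K}_i[-i]\to\mathcal{K}_{i-1}[-(i-1)]$, and, since convolutions and Postnikov systems are preserved both by the triangulated equivalence $\Psi_k$ and by the assignment of an integral functor to its kernel, the convolution of $\{\mathcal{K}_k[-k]\to\cdots\to\mathcal{K}_0\}$ is exactly the kernel of $\Psi_k\bT^{\prime}\Psi_k^{-1}$.

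\emph{Extending and reducing to the window characterization.} In \S\ref{sec:1} I extend each $\mathcal{K}_i$ along $\on{id}\times\iota_-$ to a matrix factorization $\widetilde{\mathcal{K}}_i$ on $(X_k^{+}\times\gl_k)\times(X_k\oplus\gl_k)$: the ambient spaces, the bundles $V_{k-i}$, $V_k$, and the Koszul data cutting out the Hecke kernels all make sense before imposing $\theta_-$-semistability on the target, so the extension is canonical and satisfies $(\on{id}\times\iota_-)^{*}\widetilde{\mathcal{K}}_i=\mathcal{K}_i$. The differentials extend with them, the Postnikov system of Theorem~\ref{thm:CKL} extends (uniqueness of the convolution passing along), and I set $\widetilde{\mathcal{P}}=\on{Conv}\{\widetilde{\mathcal{K}}_k[-k]\to\cdots\to\widetilde{\mathcal{K}}_0\}$. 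As $(\on{id}\times\iota_-)^{*}$ is exact it commutes with convolution, so $(\on{id}\times\iota_-)^{*}\widetilde{\mathcal{P}}$ is again the kernel of $\Psi_k\bT^{\prime}\Psi_k^{-1}$. It therefore remains to recognise $\widetilde{\mathcal{P}}$ as the kernel of the functor $(\iota_+^{*})^{-1}\colon\MF_{G_k\times\bC^{\times}}(X_k^{+}\times\gl_k,w_k)\to\cW_{\delta}$, since then $(\on{id}\times\iota_-)^{*}\widetilde{\mathcal{P}}$ induces $\iota_-^{*}(\iota_+^{*})^{-1}=\bW_\delta$ and the square commutes. For this I use the standard characterization of that kernel: it is the unique matrix factorization on $(X_k^{+}\times\gl_k)\times(X_k\oplus\gl_k)$ that \emph{(i)} lies in the magic window $\cW_{[-k,N-k)}$ with respect to the second factor, and \emph{(ii)} restricts along $\on{id}\times\iota_+$ to the matrix factorization representing the diagonal on $(X_k^{+}\times\gl_k)^{\times 2}$.

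\emph{The two conditions, and the main obstacle.} Condition \emph{(ii)} is the more direct one: it is a computation of supports which uses that $\be^{(0)}=\be^{k,k}$ is the identity (so that $\widetilde{\mathcal{K}}_0$ restricts to the diagonal), while the higher terms $\widetilde{\mathcal{K}}_i$, $i\geq 1$, restrict to acyclic matrix factorizations on the $\theta_+$-semistable locus of the second factor (or their contributions cancel through the Postnikov tower). Condition \emph{(i)} is the crux and is precisely Corollary~\ref{cor:resoln}, which is deduced from the grade restriction rule set up in \S\ref{sec:2}--\S\ref{sec:3}. The subtle point (Proposition~\ref{prop:main}) is that each individual $\widetilde{\mathcal{K}}_i$ has components involving $\bS^{\la}V_k$ with highest weights $\la$ straying outside $[-k,N-k)$, yet the Rickard differentials --- the categorification of the Lefschetz decomposition of the reflection element on the $\fsl(2)$-weight spaces --- eliminate these out-of-window contributions one stage at a time as the Postnikov tower is assembled, so that the final convolution $\widetilde{\mathcal{P}}$ has all of its weights in $[-k,N-k)$. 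Making this iterative cancellation precise --- tracking, via the grade restriction window of \S\ref{sec:2}, exactly which weights survive at each step and verifying that nothing outside $[-k,N-k)$ remains --- is where the real work lies; everything else is bookkeeping with Kn\"orrer periodicity, the $\fsl(2)$ adjunctions, and the established window formalism. Granting \emph{(i)} and \emph{(ii)} for $\widetilde{\mathcal{P}}$, the identity $\Psi_k\bT^{\prime}\Psi_k^{-1}=\bW_\delta$ follows.
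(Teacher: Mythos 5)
Your proposal follows essentially the same route as the paper: transport the Rickard kernels through Kn\"orrer periodicity, extend them along $\on{id}\times\iota_-$ to the linear stack, characterize the window kernel by the diagonal-restriction and window conditions, and reduce the latter to the grade restriction rule established via the iterative cancellation of Proposition~\ref{prop:main} and Corollary~\ref{cor:resoln}. The only small imprecision is that the non-flatness of $(\Pi_k^{\prime})^{\on{t}}$ is handled by Tor-independence of the fibre product rather than by Lascoux resolutions (those enter later, in resolving $p_*\cO_{\ddI_{k-i}}$ and in converting the $[0,k)$-bound on the kernel into the $[0,N)$-bound on the essential image), but this does not change the architecture of the argument.
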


\begin{rmk}[$k=1$] \label{rmk:k=1}
    The abelian case is certainly known to experts, though it has not been written down in the literature.
    In \cite{Hara}, Hara calculated what is equivalent to that $\bT^{\prime}$ maps $\cO(-n) = V_{1}^{\otimes n}$ to $\cO(-n) = (V_{1}^{\prime})^{\otimes n}$ for $n\in[-1,N-1)$, and showed that the direct sum of these $N$ line bundles is a tilting generator of $\Db(T^{*}\bP^{N-1})$. 
    To see the theorem is true in this case, it suffices to verify 
    $$\bW_{[-1,N-1)}(\Psi_{1}(V_{1}^{\otimes n})) = \Psi_{1}((V_{1}^{\prime})^{\otimes n}), \quad n \in [-1,N-1).$$
    Note that the Koszul factorization (\ref{eq:Kmf}) in this case is simply $\extp^{\bullet}\gl_{1}: \cO \rightleftarrows \cO$, so the image $\Psi_{1}(V_{1}^{\otimes n})$ is the matrix factorization $V_{1}^{\otimes n} \otimes  \extp^{\bullet}\gl_{1}: V_{1}^{\otimes n} \rightleftarrows V_{1}^{\otimes n}$ and same for $\Psi_{1}((V_{1}^{\prime})^{\otimes n})$. 
    In other words, the images $\Psi_{1}(V_{1}^{\otimes n})$ and $\Psi_{1}((V_{1}^{\prime})^{\otimes n})$ are restrictions of the common matrix factorization $V_{1}^{\otimes n}\otimes \extp^{\bullet}\gl_{1}$ from the linear stack $[X_{1}\oplus \gl_{1}/G_{1}]$.
    As the highest weights $n$ here are all contained in the window $[-1,N-1)$, this matrix factorization obviously lies in the window subcategory. 
\end{rmk}

The remainder of the paper is devoted to proving  Theorem~\ref{thm:main}. 
Before that, we state two immediate corollaries of the main theorem.

\begin{cor}\label{cor:sph}
    The square of $\bT$ is a window shift autoequivalence, i.e.\ 
    \[
    \Psi_{k} \bT \circ \bT \Psi^{-1}_{k} \simeq \bW_{\delta+1}^{-1}\circ \bW_{\delta}.
    \]
\end{cor}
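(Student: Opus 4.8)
The plan is to express $\bT^{2}$ through two applications of Theorem~\ref{thm:main} together with the Lusztig--Maffei--Nakajima isomorphism, and then to recognise the resulting composite of window equivalences as the asserted window shift.

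First I must say what $\bT^{2}$ is. Theorem~\ref{thm:CKL} also supplies a Rickard-complex equivalence $\bT_{N-k}\colon\Db(\TGr{N-k}{\bC^N})\simrightarrow\Db(\TGr{k}{\bC^N})$ for the stratified Mukai flop of the cotangent bundles of $\Gr(N-k,\bC^N)$ and $\Gr(\bC^N,N-k)$, and $\bT^{2}$ denotes the autoequivalence $\bT_{N-k}\circ\bT$ of $\Db(\TGr{k}{\bC^N})$ (the flop--flop). To bring in Theorem~\ref{thm:main}, which is stated for the primed functors, I would use that $\bT$ and $\bT_{N-k}$ differ from $\bT'$ and from the index-$(N-k)$ analogue $\bT'_{N-k}$ only by the LMN isomorphisms (\ref{eq:LMN}) for the indices $k$ and $N-k$, exactly as $\bT'$ arises from $\bT$ around (\ref{eq:Rickard'}); the constructions and the argument of Theorem~\ref{thm:main} apply equally to the index $N-k$ (the two resolutions playing symmetric roles), the corresponding window being the width-$N$ interval $[-(N-k),k)$.

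Conjugating by $\Psi_{k}$ and inserting factors of the pertinent Knörrer equivalences between consecutive arrows, Theorem~\ref{thm:main} (for $k$ and for $N-k$) replaces the two flops by window equivalences, leaving
\[
\Psi_{k}\,\bT^{2}\,\Psi_{k}^{-1}\ \simeq\ \Lambda'\circ\bW_{[-(N-k),\,k)}\circ\Lambda\circ\bW_{[-k,\,N-k)},
\]
where $\Lambda\colon\MF_{G_{k}\times\bC^{\times}}(X_{k}^{-}\times\gl_{k},w_{k})\simrightarrow\MF_{G_{N-k}\times\bC^{\times}}(X_{N-k}^{+}\times\gl_{N-k},w_{N-k})$, and the analogous $\Lambda'$, are the LMN isomorphisms carried through Knörrer periodicity. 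To make $\Lambda,\Lambda'$ concrete I would realise (\ref{eq:LMN}) by a correspondence over the common singularity $\mathfrak{N}$ — the sort of base change already performed for the Hecke correspondences in (\ref{eq:transport}) — and feed it into Lemma~\ref{lem:fun}, which at the same time produces the inducing kernels.

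The corollary is then equivalent to the identity
\[
\Lambda'\circ\bW_{[-(N-k),\,k)}\circ\Lambda\ \simeq\ \bW_{[-k+1,\,N-k+1)}^{-1},
\]
a statement purely about how LMN interacts with the magic windows, with no categorical $\fsl(2)$ action involved. It hinges on two features of $\Lambda$ (and likewise $\Lambda'$). First, LMN interchanges the stability conditions, $\mathfrak{M}_{\te_{+}}(N-k,N)\cong\mathfrak{M}_{\te_{-}}(k,N)$, so $\Lambda$ matches $\te_{+}$-semistability on the $(N-k)$-side with $\te_{-}$-semistability on the $k$-side. Second, LMN identifies the rank-$k$ tautological bundle $V_{k}'$ on $\TGr{\bC^N}{k}$ with the cokernel $\bC^N/V_{N-k}$ on $\TGr{N-k}{\bC^N}$; since $\det(\bC^N/V_{N-k})\cong(\det V_{N-k})^{-1}$ and, more generally, the cokernel sequence resolves each $\bS^{\lambda}(\bC^N/V_{N-k})$ by bundles $\bS^{\mu}V_{N-k}$, the equivalence $\Lambda$ matches the window subcategory $\cW_{[-k,N-k)}$ with $\cW_{[-(N-k)+1,\,k+1)}$ — the index-$(N-k)$ window $[-(N-k),k)$ moved up by one. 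Combining these two features, the middle $(N-k)$-window equivalence sandwiched between $\Lambda$ and $\Lambda'$ becomes the $k$-window equivalence for the window $[-k+1,N-k+1)$ run from the $\te_{-}$- to the $\te_{+}$-side, namely $\bW_{[-k+1,N-k+1)}^{-1}$; reattaching the outstanding factor $\bW_{[-k,N-k)}=\bW_{\delta}$ gives $\Psi_{k}\,\bT^{2}\,\Psi_{k}^{-1}\simeq\bW_{\delta+1}^{-1}\circ\bW_{\delta}$. The origin of the $+1$ is the elementary fact that the half-open interval $[-k,N-k)$ negates to $[-(N-k)+1,\,k+1)$, not to $[-(N-k),k)$. (When $k=1$ the whole chain is transparent from Remark~\ref{rmk:k=1}.)

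The genuinely nontrivial input, everything else being bookkeeping, is the precise description of $\Lambda$ and $\Lambda'$: checking that the LMN correspondence meets the properness and quasi-smoothness hypotheses of Lemma~\ref{lem:fun}, computing its transported kernel along with the $\det$-twists it carries (cf.\ Definition~\ref{def:ef} and the transported complex (\ref{eq:Rickard'})), and — the delicate point — keeping track of the $\bC^{\times}$-internal degree and the Weyl-group normalisations tightly enough to be certain that the window moves by exactly $+1$, not by $-1$ or $0$. That off-by-one is the whole content of the corollary.
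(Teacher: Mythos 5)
Your overall strategy---apply the main theorem twice, once for index $k$ and once for index $N-k$, and splice the two applications together with the LMN isomorphism carried through Kn\"orrer periodicity---is genuinely different from the paper's, and as written it has a real gap. The reduction you arrive at, namely $\Lambda'\circ\bW_{[-(N-k),\,k)}\circ\Lambda\simeq\bW_{[-k+1,\,N-k+1)}^{-1}$, is not bookkeeping: it is a statement about how LMN intertwines magic window subcategories for two \emph{different} gauge groups $\GL(V_k)$ and $\GL(V_{N-k})$ acting on different linear stacks, and nothing in the paper (or in \cite{HLS}) hands you this. You acknowledge that the off-by-one "is the whole content of the corollary" and then do not prove it; the heuristic via resolving $\bS^{\lambda}(\bC^{N}/V_{N-k})$ by $\bS^{\mu}V_{N-k}$ is suggestive but is not an argument that the window \emph{equivalences} (not just some generators) match up. A second, smaller gap: invoking "Theorem~\ref{thm:main} for index $N-k$" steps outside the standing hypothesis $2k\leq N$ under which the entire analysis of \S\ref{sec:2}--\S\ref{sec:3} (strata, grade restriction rules, Lascoux resolutions) is carried out; the claim that "the argument applies equally" to $N-k$ would itself need proof.

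The paper's proof avoids both issues by a short kernel computation. By \cite[Lemma 7.4]{CKL} the kernels of $\bT_{N-k}$, $\bT_{k}$, and $\bT_{k}^{-1}$ all differ by explicit powers of the line bundle $L=\det(V_{k})\otimes\det(V_{N-k})\otimes\det(\bC^{N})^{-1}$, which yields
\[
\bT_{N-k}\simeq (-)\otimes\det(V_{k})\circ\bT_{k}^{-1}\circ(-)\otimes\det(\bC^{N}/V_{N-k})^{-1}.
\]
A \emph{single} application of Theorem~\ref{thm:main} then converts $\bT_{k}$ and $\bT_{k}^{-1}$ into $\bW_{\delta}$ and $\bW_{\delta}^{-1}$, and the only remaining observation is that conjugating $\bW_{\delta}$ by the determinant twists shifts the window parameter by one---a statement internal to a single gauge group, immediate from the definition of $\cW_{\delta}$. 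If you want to pursue your route, the honest formulation is that you would be proving an LMN--window compatibility as a new intermediate result; the paper's detour through $\bT_{k}^{-1}$ is precisely how it dodges having to do so.
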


From \cite{HLShip}, we know that such a window shift autoequivalence is given by spherical twists around certain spherical functors arising from the Kempf--Ness stratification of the LG model $X_{k}\oplus \gl_{k}$. 
Spelling out these spherical twists will be of independent interest.

\begin{proof}
    By definition, the flop equivalence in the opposite direction 
    \begin{equation*}
        \bT_{N-k}: \Db(\TGr{N-k}{\bC^{N}}) \simrightarrow \Db(\TGr{k}{\bC^{N}})
    \end{equation*}
    is given by the convolution of a similar Rickard complex 
    \begin{equation*}
    \left\{\bbf^{(k)}\be^{(N-k)}[-k]\to \cdots \to \bbf^{(1)}\be^{(N-2k+1)}[-1] \to \be^{(N-2k)} \right\},
    \end{equation*}
    cf.\ the Rickard complex (\ref{eq:Rickard}) of $\bT_{k}$. 
    By \cite[Lemma 7.4]{CKL}, the kernel of $\bT_{N-k}$ is isomorphic to that of $\bT_{k}$ tensoring with $L^{N-2k}$, where $L$ is the line bundle 
    \[
    \det(V_{k})\otimes \det(V_{N-k}) \otimes \det(\bC^{N})^{-1}.
    \]
    On the other hand, the inverse 
    \begin{equation*}
        \bT_{k}^{-1}: \Db(\TGr{N-k}{\bC^{N}}) \simrightarrow \Db(\TGr{k}{\bC^{N}})
    \end{equation*}
    is given by the convolution of the complex of adjoints
    \begin{equation*}
    \left\{\be^{(N-2k)} \to \bbf^{(1)}\be^{(N-2k+1)}[1] \to \cdots \to \bbf^{(k)}\be^{(N-k)}[k] \right\}.
    \end{equation*}
    From \textit{loc.\ cit.}, the kernel of $\bT_{k}^{-1}$ is isomorphic to that of $\bT_{k}$ tensoring with $L^{N-2k-1}$. 
    Thus, we have
    \begin{equation*}
        \bT_{N-k} \simeq (-) \otimes \det(V_{k}) \circ \bT_{k}^{-1} \circ  (-) \otimes \det(\bC^{N}/V_{N-k})^{-1}.
    \end{equation*}
    Combining this with Theorem~\ref{thm:main}, we are able to express $\bT^{2}$ as a composition of window equivalences
    \begin{align*}
        \Psi \bT_{N-k}\circ \bT_{k} \Psi^{-1}  &\simeq (-) \otimes \det(V_{k}) \circ \bW_{\delta}^{-1} \circ  (-) \otimes \det(V_{k}^{\prime})^{-1} \circ \bW_{\delta} \\
        &  \simeq \bW_{\delta+1}^{-1}\circ \bW_{\delta}.
    \end{align*}
\end{proof}

\begin{cor}\label{cor:inv}
    Let $\la = (\la_{i})_{i=1}^{k}$ be a dominant weight such that $-1\leq \la_{i}\leq N-2k$, then $\bT$ maps the Schur functor $\bS^{\la}V_{k}$ to the Schur functor $\bS^{\la}(\bC^{N}/V_{N-k})$.   
\end{cor}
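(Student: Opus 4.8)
The plan is to deduce this corollary from the commutative diagram in Theorem~\ref{thm:main} by translating everything into the language of matrix factorizations via the Kn\"orrer periodicity $\Psi_k$, and then checking that the relevant equivariant bundles land in the magic window $\cW_{[-k,N-k)}$. First I would recall that $\Psi_k(\bS^{\la}V_k)$ is, by construction of the Kn\"orrer equivalence, obtained by tensoring the equivariant bundle $V_{\la}\otimes \cO_{X_k\oplus \gl_k}$ with the Koszul factorization (\ref{eq:Kmf}) for the moment-map section $\mu_k$, and then restricting to the $\te_+$-semistable locus $X_k^+\times \gl_k$. Crucially, this matrix factorization is the restriction of one defined on the whole linear stack $[X_k\oplus \gl_k / G_k]$, and since the highest weight $\la = (\la_i)$ satisfies $-1 \leq \la_i \leq N-2k$, which is a sub-interval of the window $[-k, N-k)$, the bundle $V_{\la}\otimes \cO_{X_k\oplus\gl_k}$ lies in the window subcategory $\cW_{[-k,N-k)}$; hence so does its tensor with the Koszul factorization (the window subcategory is closed under tensoring with $\extp^{\bullet}\gl_k^{\vee}$, as the weights of $\gl_k$ do not push $\la$ out of the window — this needs the hypothesis that $\la_i$ sits in the narrower interval, leaving room for the adjoint weights).

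Next, since $\Psi_k(\bS^{\la}V_k)$ is the restriction to $X_k^+\times\gl_k$ of an object already lying in $\cW_{[-k,N-k)}$, the window equivalence $\bW_{\delta} = \iota_-^*\circ(\iota_+^*)^{-1}$ sends it to the restriction of the \emph{same} window object to $X_k^-\times\gl_k$. That restriction is precisely $\Psi_k$ applied to the bundle on $\TGr{\bC^{N}}{k}$ whose associated equivariant bundle is $V_{\la}\otimes\cO$ with the primed tautological bundle $V_k'$, i.e.\ $\Psi_k(\bS^{\la}V_k')$. Under the canonical isomorphism $\bC^{N}/V_{N-k}\cong V_k'$ coming from the LMN isomorphism (\ref{eq:LMN}), this is $\Psi_k(\bS^{\la}(\bC^{N}/V_{N-k}))$. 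Then, by the commutativity of the diagram in Theorem~\ref{thm:main} together with the identification $\bT = \bT'$ (both being $\on{Conv}$ of isomorphic Rickard complexes, by Theorem~\ref{thm:CKL}), we get
\[
\Psi_k(\bT(\bS^{\la}V_k)) \simeq \bW_{\delta}(\Psi_k(\bS^{\la}V_k)) \simeq \Psi_k(\bS^{\la}(\bC^{N}/V_{N-k})),
\]
and applying $\Psi_k^{-1}$ finishes the argument.

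The main obstacle I expect is the bookkeeping in the first step: verifying that $\Psi_k(\bS^\la V_k)$ genuinely extends to a window object, which amounts to checking that the components of the Koszul-twisted matrix factorization $(V_\la \otimes \extp^{\bullet}\gl_k^{\vee})\otimes\cO$ decompose into irreducibles $V_\mu\otimes\cO$ with all $\mu$ still in $[-k, N-k)$. The weights appearing in $\extp^{\bullet}\gl_k^{\vee}$ are differences $e_i - e_j$ of standard weights, so a constituent $\mu$ of $V_\la \otimes \extp^j \gl_k^{\vee}$ has entries bounded by $\la_i \pm 1$ roughly, and the condition $-1\le \la_i \le N-2k$ gives exactly the slack needed to stay within $[-k,N-k)$ — but making this precise requires the branching/Pieri-type estimate for tensoring a Schur bundle with an exterior power of the adjoint, and care that the internal $\bC^\times$-grading is being tracked correctly. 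The second potential subtlety is confirming that under the LMN base change the primed tautological bundle $V_k'$ is literally $\bC^{N}/V_{N-k}$ as claimed in \S\ref{sec:Nak}, so that the final identification of Schur bundles is the stated one; this is, however, exactly the content of the isomorphism (\ref{eq:LMN}) and should be immediate.
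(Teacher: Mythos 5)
Your proposal follows essentially the same route as the paper's proof: show that $\Psi_{k}(\bS^{\la}V_{k}) = \bS^{\la}V_{k}\otimes \extp^{\bullet}\gl_{k}$ extends to an object of the window subcategory $\cW_{[-k,N-k)}$ on the linear stack, conclude invariance under $\bW_{\delta}$, and transfer back through Theorem~\ref{thm:main} and the LMN identification $V_{k}^{\prime}\cong \bC^{N}/V_{N-k}$. The one imprecision is your heuristic that tensoring with $\extp^{\bullet}\gl_{k}$ shifts the entries of $\la$ by roughly $\pm 1$: the paper's Cauchy--plus--Littlewood--Richardson computation shows the constituents of $\extp^{r}\gl_{k}$ have highest weights $\mu$ with $\mu_{1}\leq k-1$ and $\mu_{k}\geq 1-k$, so the shift can be as large as $k-1$ in either direction --- which is exactly the slack that the hypothesis $-1\leq \la_{i}\leq N-2k$ leaves inside the window $[-k,N-k)$.
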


In other words, such a Schur functor is invariant under the GIT wall-crossing 
\[
\bT^{\prime} (\bS^{\la}V_{k}) = \bS^{\la} V_{k}^{\prime}. 
\]
When $k=1$, these Schur functors coincide with a choice of Beilinson's collection $\{ \cO(2-N)$, $\cdots$, $\cO(1)\}$, and as mentioned in Remark~\ref{rmk:k=1}, the invariance is already known in \cite{Hara}.
For $k\geq 2$, the set of Schur functors in this corollary is a proper subset of (any choice of) Kapranov's collection \cite{Kap} (that contains it). 
In \cite{Tseu}, we constructed a tilting generator of $\Db(\TGr{2}{\bC^{N}})$, whose direct summands are given by these invariant Schur functors along with other extension bundles taken to replace the rest of Kapranov's collection.

\begin{proof}
    The equivariant bundle $\bS^{\la}V_{k}$ on $X_{k}$ restricts to the same Schur functor $\bS^{\la}V_{k}$ or $\bS^{\la}V_{k}^{\prime}$ over the semi-stable loci $X_{k}^{\pm}$. 
    Under the Kn{\"o}rrer periodicity 
    \begin{equation*}
        \Psi_{k}: \Db([\mu^{-1}(0)/G_{k}]) \simrightarrow \MF_{G_{k}\times \bC^{\times}}(X_{k}\oplus \gl_{k}, w_{k}), 
    \end{equation*}
    this bundle $\bS^{\la}V_{k}$ is mapped to $\bS^{\la}V_{k} \otimes \extp^{\bullet}\gl_{k}$, its tensor product with the Koszul factorization (\ref{eq:Kmf}). 
    We shall verify  that all the Schur functors $\bS^{\nu}V_{k}$ appearing in a decomposition of this tensor product have highest weights located in the window $[-k,N-k)$.
    In particular, this implies that the matrix factorization $\bS^{\la}V_{k} \otimes \extp^{\bullet}\gl_{k}$ lies in the window subcategory.
    As a result, its restriction $\bS^{\la}V_{k} \otimes \extp^{\bullet}\gl_{k} = \Psi_{k}(\bS^{\la}V_{k})$ to $X_{k}^{+}\times \gl_{k}$ is sent under the window equivalence to its restriction $\bS^{\la}V_{k}^{\prime} \otimes \extp^{\bullet}\gl_{k} = \Psi_{k}(\bS^{\la}V_{k}^{\prime})$ to $X_{k}^{-}\times \gl_{k}$.
    
    We first decompose each exterior power $\bigwedge^{r} \gl_{k}$ as 
    \begin{equation*}
        \extp^{r} (V_{k}^{\vee}\otimes V_{k} ) \cong \bigoplus_{\alpha} \bS^{\alpha} V_{k}^{\vee} \otimes \bS^{\alpha^{\prime}} V_{k},
    \end{equation*}
    where the direct sum is over all Young diagrams $\alpha = (\alpha_{i})_{i=1}^{k}$ with $\alpha_{i} \in [0,k]$ and $\sum_{i} \alpha_{i} = r$. 
    Here the conjugate Young diagram $\alpha^{\prime} = (\alpha_{i}^{\prime})_{i=1}^{k}$ is defined by counting the number of boxes in each column of $\alpha$. 
    The summand further decomposes 
    \begin{equation*}
        \bS^{\alpha} V_{k}^{\vee} \otimes \bS^{\alpha^{\prime}} V_{k} \cong \bS^{-\alpha_{k},\cdots, -\alpha_{1}} V_{k} \otimes \bS^{\alpha^{\prime}} V_{k} \cong \bigoplus_{\mu} c_{-\alpha,\alpha^{\prime}}^{\mu} \bS^{\mu} V_{k}
    \end{equation*}
    by the Littlewood--Richardson rule \cite[\S 2.3]{Wey}. 
    We claim that each $\mu=(\mu_{i})_{i=1}^{k}$ satisfies $\mu_{1}\leq k-1$ and $\mu_{k}\geq 1-k$. 
    These two conditions are obvious when $\alpha_{1}^{\prime} \leq k-1$ and $\alpha_{1} \leq k-1$ respectively. 
    Assume the first column of $\alpha$ has $k$ boxes, then we immediately have $\alpha_{k}\geq 1$, hence $\mu_{1}\leq k-1$.
    On the other hand, $\alpha_{1} = k$ implies $\alpha_{k}^{\prime}\geq 1$, so in this case we also have $\mu_{k}\geq 1-k$. 
    By the Littlewood--Richardson rule again, each tensor product $\bS^{\mu}V_{k} \otimes \bS^{\la} V_{k}$ decomposes into a direct sum of Schur functors $\bS^{\nu} V_{k}$, where $\nu _{1}\leq N-k+1$ and $\nu_{k}\geq -k$ as expected. 
\end{proof}

\subsection{The kernels}\label{sec:kernel}
In this subsection, we find the matrix factorization kernels that induce the twisted functors $\Psi \be \Psi^{-1}$, $\Psi \bbf \Psi^{-1}$, and $\Psi \Theta \Psi^{-1}$ of the categorical $\fsl(2)$ action. 
The procedure is partly inspired by \cite[\S 3]{Toda-flip}. 

By definition (see, e.g., \cite{BFK-2,Hira-2}), an integral functor $\Phi_{F}$ between the derived categories of matrix factorizations of two gauged LG models $(Y_{i},G_{i},w_{i})$, $i=1,2$ is induced by a kernel 
\[
F \in \MF_{G_{1}\times G_{2}\times \bC^{\times}}(Y_{1}\times Y_{2}, p_{2}^{*} w_{2} - p_{1}^{*} w_{1})
\]
via the Fourier--Mukai type formula $\Phi_{F}(-) = p_{2,*}(p_{1}^{*}(-)\otimes F)$, where $p_{i}:Y_{1}\times Y_{2} \to Y_{i}$, $i=1,2$ are the projections.

Consider the fibre product of (\ref{eq:LGmap})
\begin{equation*}
    \begin{tikzcd}
        W_{m,n} \arrow[r, "p_{n}"] \arrow[d, "p_{m}"] \arrow[dr, phantom, "\square"]
        & Z_{m,n}\times \gl_{n} \arrow[d, "\Pi_{n}^{\on{t}}"] \\
        Z_{m,n}\times \gl_{m} \arrow[r, "\Pi_{m}^{\on{t}}"]
        & Z_{m,n}\times \Hom(V_{m},V_{n}).
    \end{tikzcd}
\end{equation*}
This is the locally closed subvariety in $Z_{m,n}\times \gl_{m} \times \gl_{n}$ where the map $\alpha_{m,n}: V_{m}\hookrightarrow V_{n}$ from $Z_{m,n}$ intertwines with the endomorphisms $\epsilon_{m} \in \gl_{m}$ and $\epsilon_{n} \in \gl_{n}$, i.e.\ 
\begin{equation} \label{eq:intcond1}
    \alpha_{m,n} \epsilon_{m} = \epsilon_{n} \alpha_{m,n}.
\end{equation}
In other words, $\epsilon_{n}$ restricts to $\epsilon_{m}$. 
As $Z_{m,n}$ is naturally embedded into $X_{m}^{+} \times X_{n}^{+}$, the fibre product $W_{m,n}$ is a correspondence with natural projections
\begin{equation*}
    X_{m}^{+}\times \gl_{m} \xlongleftarrow{\pi_{m} p_{m}} W_{m,n} \xlongrightarrow{\pi_{n} p_{n}} X_{n}^{+}\times \gl_{n}. 
\end{equation*}
The quotient $[W_{m,n}/G_{m}\times G_{n}]$ is also known as a Hecke correspondence of `triple quiver varieties' \cite[\S 3]{VV}.  

Similarly, consider the fibre product of (\ref{eq:LGmap1})
\begin{equation}\label{eq:torind}
    \begin{tikzcd}
        W_{k-i,k}^{\prime} \arrow[r, "p_{k}^{\prime}"] \arrow[d, "p_{k-i}"] \arrow[dr, phantom, "\square"]
        & Z_{k-i,k}^{\prime} \times  \gl_{k} \arrow[d, "(\Pi_{k}^{\prime})^{\on{t}}"] \\
        Z_{k-i,k}^{\prime} \times  \gl_{k-i} \arrow[r, "\Pi_{k-i}^{\on{t}}"]
        & Z_{k-i,k}^{\prime} \times \Hom(V_{k}^{\prime}, V_{k-i}).
    \end{tikzcd}
\end{equation}
The correspondence $W_{k-i,k}^{\prime}$ is identified with the locus from $Z_{k-i,k}^{\prime} \times \gl_{k-i} \times \gl_{k}$ where the map $\beta_{k-i}: V_{k}^{\prime} \to V_{k-i}$ in $Z_{k-i,k}^{\prime}$ intertwines with the endomorphisms $\epsilon_{k-i} \in \gl_{k-i}$ and $\epsilon_{k}^{\prime} \in \gl_{k}$, i.e.\ 
\begin{equation}\label{eq:intcond2}
    \beta_{k-i} \epsilon_{k}^{\prime} = \epsilon_{k-i} \beta_{k-i}.
\end{equation}
This is a condition that defines a complete intersection, and it is straightforward to check that the fibre product (\ref{eq:torind}) is also Tor-independent.

\begin{lem}\phantomsection \label{lem:ef}
    \begin{enumerate}[wide, labelindent=0pt, label = (\arabic*)]
        \item The functor $\Psi_{m} \be^{n,m} \Psi_{n}^{-1}$ is induced by the kernel
        \begin{equation*}
            \cO_{W_{m,n}}\otimes \det(V_{m})^{-m}\otimes \det(V_{n})^{n} \otimes \det(\bC^{N})^{m-n} \langle mn-n^{2} \rangle.
        \end{equation*} 
        \item The functor $\Psi_{n} \bbf^{m,n} \Psi_{m}^{-1}$ is induced by the kernel
        \begin{equation*}
            \cO_{W_{m,n}} \otimes \det(V_{m})^{m-N} \otimes \det(V_{n})^{N-n} \langle mn-m^{2} \rangle.
        \end{equation*} 
        \item The functor $\Psi_{k} \bbf^{\{ i\}} \Psi_{k-i}^{-1}$ is induced by the kernel 
        \begin{equation*}
            \cO_{W_{k-i,k}^{\prime}}\otimes \det(V_{k-i})^{k-i}\otimes \det(V_{k}^{\prime})^{-k} \otimes \det(\bC^{N})^{i} \langle ik-i^{2}\rangle. 
        \end{equation*} 
    \end{enumerate}
\end{lem}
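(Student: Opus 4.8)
The plan is to compute each twisted functor $\Psi \be \Psi^{-1}$, $\Psi \bbf \Psi^{-1}$, $\Psi \bbf^{\{i\}} \Psi^{-1}$ as a Fourier--Mukai transform of matrix factorizations by directly tracking the kernel $\cO_{\fZ_{m,n}}$ (twisted by the relevant determinant line bundle from Definition~\ref{def:ef}) through the Kn{\"o}rrer periodicity. Concretely, the integral functor $\be^{n,m}$ factors as a pushforward--pullback along the roof $\TGr{m}{\bC^{N}} \xleftarrow{\pi_m} \fZ_{m,n} \xrightarrow{\pi_n} \TGr{n}{\bC^{N}}$, and all three spaces carry LG presentations: $\mu^{-1}(0)^{\te_\pm\text{-ss}}/G_\bullet$ via $(X_\bullet^\pm \times \gl_\bullet, w_\bullet)$, and $\fZ_{m,n}$ via $(Z_{m,n}\times \Hom(V_n,V_m)^\vee, w_{m,n})$. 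The diagrams (\ref{eq:LGmap}) and (\ref{eq:LGmap1}) set up precisely the ambient morphisms of LG models needed to invoke Lemma~\ref{lem:fun}, so the first step is to apply that lemma to the two legs $\Pi_m^{\on{t}}$ and $\Pi_n^{\on{t}}$ (resp.\ $\Pi_{k-i}^{\on{t}}$ and $(\Pi_k^\prime)^{\on{t}}$), obtaining $\Psi_m \pi_{m*} = \pi_{m*}(\Pi_m^{\on{t}})^* \Psi_{m,n}$ and $\Psi_{m,n} \pi_n^* = (\Pi_n^{\on{t}})_! \pi_n^* \Psi_n$ (with analogous formulas in the primed case). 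Composing these and using base change along the Tor-independent fibre squares defining $W_{m,n}$ and $W_{k-i,k}^\prime$ converts the composite into a single FM transform whose kernel is supported on (the image of) $W_{m,n}$, namely $\cO_{W_{m,n}}$ tensored with the pulled-back twist.

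The second step is bookkeeping: one must transport the determinant twists $\det(\bC^N/V_n)^{m-n}\otimes\det(V_m)^{n-m}$ (for $\be$), $\det(V_n/V_m)^{N-m-n}$ (for $\bbf$), and $\det(\bC^N/V_{k-i})^i\otimes\det(V_k^\prime)^{-i}$ (for $\bbf^{\{i\}}$) through the pullbacks along $\Pi^{\on{t}}$, rewrite $\det(\bC^N/V_n) = \det(\bC^N)\otimes\det(V_n)^{-1}$ and $\det(V_n/V_m) = \det(V_n)\otimes\det(V_m)^{-1}$ to express everything in the basis $\det(V_m), \det(V_n), \det(\bC^N)$, and verify the exponents match the statement. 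This is routine but must be done carefully, and in the $\bbf^{\{i\}}$ case one also uses the canonical isomorphism $\bC^N/V_{N-k}\cong V_k^\prime$ coming from (\ref{eq:transport}) to identify $\det(\bC^N/V_{k-i})$-twists correctly. The internal degree shifts $\langle mn-n^2\rangle$, $\langle mn-m^2\rangle$, $\langle ik-i^2\rangle$ arise from the $\bC^\times$-weight accounting in Kn{\"o}rrer periodicity: each application of $p^*$ and $i_*$ in Remark~\ref{rmk:kno} shifts cohomological degree into internal degree, and $(\Pi^{\on{t}})_!$ (the shriek pushforward along a vector-bundle map, dual to $(\Pi^{\on{t}})^*$) contributes the relative dimension; computing $\dim \Hom(V_m,V_n) = mn$ versus $\dim\gl_n = n^2$ (resp.\ $\dim\gl_m = m^2$) gives exactly the claimed shifts, up to the relative canonical twist which I expect to be absorbed into the determinant bookkeeping above.

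I expect the main obstacle to be the non-flatness of $\Pi_{k-i}^{\on{t}}$ and $(\Pi_k^\prime)^{\on{t}}$ in part (3), flagged in the text after (\ref{eq:LGmap1}): since the tautological map $V_k^\prime\to V_{k-i}$ has arbitrary rank, these morphisms are not flat, so one cannot naively commute $(\Pi^{\on{t}})^*$ past pushforwards. The fix is to work with the \emph{shriek} pullback $(\Pi^{\on{t}})_!$ (as Lemma~\ref{lem:fun} already does) and to exploit that the relevant fibre square (\ref{eq:torind}) is nonetheless Tor-independent and cuts out a complete intersection (the intertwining condition (\ref{eq:intcond2})), which is exactly what is asserted just before Lemma~\ref{lem:ef}; this Tor-independence is what makes derived base change hold on the nose and keeps the kernel a genuine structure sheaf $\cO_{W_{k-i,k}^\prime}$ rather than a complex. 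For parts (1) and (2) the analogous square for $W_{m,n}$ is a transverse intersection (the intertwining condition (\ref{eq:intcond1}) is linear and cuts a smooth subvariety), so there the argument is cleaner. A secondary point to check is that the potentials match under all the maps --- but this is already recorded: $\pi_m^* w_m = (\Pi_m^{\on{t}})^* w_{m,n}$ and $(\Pi_n^{\on{t}})^* w_{m,n} = \pi_n^* w_n$ (and their primed analogues) --- so the FM kernel indeed lands in $\MF$ of the difference potential $p_2^* w_n - p_1^* w_m$ as required. Once the kernel is identified on the nose, comparison with the stated formulas is immediate.
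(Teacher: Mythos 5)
Your proposal follows essentially the same route as the paper: apply Lemma~\ref{lem:fun} to both legs of the correspondence, use (derived) base change along the Tor-independent fibre square defining $W_{m,n}$ (resp.\ $W_{k-i,k}^{\prime}$) together with the projection formula, and then track the determinant twists and the $\bC^{\times}$-weights contributed by $(\Pi^{\on{t}})_{!}$. The only point worth tightening is the degree-shift accounting: the paper gets $\langle mn-n^{2}\rangle$ as the sum of the weight $-2n(n-m)$ of $\omega_{\Pi_{n}^{\on{t}}}$ (squared dilation) and the cohomological shift $[n(n-m)]$, using that $[\cdot]$ and $\langle\cdot\rangle$ agree on $\cO_{W_{m,n}}$ — but your "relative dimension versus canonical twist" bookkeeping lands in the same place.
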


\begin{proof}
    By Definition~\ref{def:ef} and Lemma~\ref{lem:fun}, the functor $\Psi_{m} \be^{n,m} \Psi_{n}^{-1}$ is given by
    \begin{align*}
        & \Psi_{m} \pi_{m,*} \circ (-)\otimes \det(\bC^{N}/V_{n})^{m-n}\otimes\det(V_{m})^{n-m} \circ \pi_{n}^{*} \Psi_{n}^{-1}\\
        = & \pi_{m,*} (\Pi_{m}^{\on{t}})^{*} \Psi_{m,n} \circ (-)\otimes \det(\bC^{N}/V_{n})^{m-n}\otimes\det(V_{m})^{n-m} \circ \Psi_{m,n}^{-1} (\Pi_{n}^{\on{t}})_{!} \pi_{n}^{*}. 
    \end{align*}
    The shriek pushforward  $(\Pi_{n}^{\on{t}})_{!} = (\Pi_{n}^{\on{t}})_{*} (- \otimes \omega_{\Pi_{n}^{\on{t}}} [\dim \Pi_{n}^{\on{t}}])$, where
    \begin{equation*}
          \omega_{\Pi_{n}^{\on{t}}} = \det(V_{m})^{-n}\otimes \det(V_{n})^{m} \langle -2n(n-m) \rangle, \quad \dim \Pi_{n}^{\on{t}} = n(n-m). 
    \end{equation*}
    Here the degree shift comes from the squared dilation of $\bC^{\times}$  on $\gl_{n}$ and  $\Hom(V_{m},V_{n})$.
    Tensoring with these determinant line bundles commutes with the direct and inverse image functors, hence it also commutes with the Kn{\"o}rrer periodicity (Remark~\ref{rmk:kno}). 
    Now, by flat base change 
    \begin{equation*}
        \pi_{m,*} (\Pi_{m}^{\on{t}})^{*}(\Pi_{n}^{\on{t}})_{*} \pi_{n}^{*} \simeq \pi_{m,*} p_{m,*} p_{n}^{*} \pi_{n}^{*}
    \end{equation*}
    and the projection formula (see, e.g., \cite[\S 4]{Hira-2}), we see that $\Psi_{m} \be^{n,m} \Psi_{n}^{-1}$ is induced by the asserted kernel on $W_{m,n}$. 
    Note that due to the intertwining condition (\ref{eq:intcond1}), the pullback of $w_{n}$ and $w_{m}$ to $(X_{m}^{+}\times \gl_{m}) \times (X_{n}^{+}\times \gl_{n})$ agrees over $W_{m,n}$.
    This ensures that the structure sheaf of $W_{m,n}$ is a reasonable matrix factorization kernel. 
    For the same reason, the cohomological degree shift $[\cdot]$ and the internal degree shift $\langle \cdot \rangle$ are the same on $\cO_{W_{m,n}}$, and the total degree shift is $-2n(n-m) + n(n-m) = mn-n^{2}$. 
    
    The arguments for the other two statements are the same.
    Note that since (\ref{eq:torind}) is (derived) Cartesian, the derived base change formula $(\Pi_{k}^{\prime})^{\on{t},*} (\Pi_{k-i}^{\on{t}})_{*}  \simeq  p^{\prime}_{k,*} p_{k-i}^{*}$ still holds. 
\end{proof}

Let $p_{12},p_{13},p_{23}$ be the projections that respectively forget the third, second, and  first factor of the triple product 
\begin{equation*}
    (X_{k}^{+}\times \gl_{k}) \times (X_{k-i}^{+}\times \gl_{k-i}) \times (X_{k}^{-} \times \gl_{k}). 
\end{equation*}
The scheme-theoretic intersection $I_{k-i} = p_{12}^{-1}(W_{k-i,k})\cap p_{23}^{-1}(W_{k-i,k}^{\prime})$ is isomorphic to the locally closed subvariety of the affine space
\begin{align*}
    \Hom(V_{k-i},V_{k}) & \oplus \Hom(V_{k},\bC^{N}) \oplus \Hom(\bC^{N},V_{k}^{\prime}) \\ & \oplus \Hom(V_{k}^{\prime},V_{k-i}) \oplus \gl(V_{k-i}) \oplus \gl(V_{k}) \oplus \gl(V_{k}^{\prime}) \notag
\end{align*}
where the maps in the first row are all of full rank and the endomorphisms in the second row satisfy both of the intertwining conditions (\ref{eq:intcond1}) and (\ref{eq:intcond2}). 
Intuitively, the intersection $I_{k-i}$ consists of quiver representations
\begin{equation}\label{eq:quiver}
    \begin{tikzcd}
        V_{k}^{\prime} \arrow[out=120,in=60,loop,looseness=2.7, "\epsilon_{k}^{\prime}"] \arrow[r, "\beta_{k-i}"] 
        & V_{k-i} \arrow[r,hook,"\alpha_{k-i}"] \arrow[out=120,in=60,loop,looseness=2.7, "\epsilon_{k-i}"] 
        & V_{k} \arrow[out=120,in=60,loop,looseness=2.7, "\epsilon_{k}"] \arrow[ld,  hook',"a"]  \\
        & \bC^{N} \arrow[lu,  two heads,"b"] &
    \end{tikzcd}
\end{equation}
with relations $\epsilon_{k-i} = \epsilon_{k}|_{V_{k-i}}$ and $\beta_{k-i} \epsilon_{k}^{\prime} = \epsilon_{k-i} \beta_{k-i}$. 
These two intertwining relations together imply another intertwining condition 
\[
(\alpha_{k-i} \circ \beta_{k-i}) \epsilon_{k}^{\prime} = \epsilon_{k} (\alpha_{k-i} \circ \beta_{k-i}). 
\]
Thus, by forgetting $V_{k-i}$ and composing $\alpha_{k-i} \circ \beta_{k-i}$, the intersection $I_{k-i}$ is mapped to the correspondence $I_{k} = W_{k,k}^{\prime}$, giving a partial resolution of the determinantal locus
\begin{equation*}
    C_{\leq k-i} =  \left\{\on{rk} (\beta_{k}: V_{k}^{\prime}\to V_{k}) \leq k-i \right\} \subseteq I_{k}.
\end{equation*}
This locus consists of $k-i+1$ equidimensional components
\begin{equation*}
    C_{j} = \on{cl} \{ \on{rk} (\beta_{k}) = j \}  \subset I_{k}, \quad j=0,1,\cdots,k-i, 
\end{equation*}
where $\on{cl}\{ \cdot \}$ denotes the closure.

As $\be^{(0)} = \on{id}$, the right end term $\Theta^{\{0\}}$ of the Rickard complex (\ref{eq:Rickard'}) is simply $\bbf^{\{ 0\}}$. 
So, the kernel of $\Psi_{k}\Theta^{\{0\}}\Psi_{k}^{-1}$ is supported on $I_{k} = W_{k,k}^{\prime}$ and given by
\[
\cO_{I_{k}}\otimes \det(V_{k})^{k} \otimes \det(V_{k}^{\prime})^{-k}. 
\]
For $i>0$, $\Theta^{\{i\}} = \bbf^{\{ i\}} \be^{(i)}$, and we need to calculate the convolution of the kernels of $\Psi_{k}\bbf^{\{ i\}} \Psi_{k-i}^{-1}$ and $\Psi_{k-i} \be^{k,k-i} \Psi_{k}^{-1}$.

\begin{prop}\label{prop:Dc}
    For $i\geq 1$, the functor $\Psi_{k} \Theta^{\{i\}} \Psi_{k}^{-1}$ is induced by the sheaf
        \begin{equation*}
            R^{0}p_{13,*}\cO_{I_{k-i}} \otimes \det(V_{k})^{k} \otimes \det(V_{k}^{\prime})^{-k} \langle -i^{2} \rangle. 
        \end{equation*} 
\end{prop}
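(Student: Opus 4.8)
The plan is to write $\Theta^{\{i\}}$ as the composition of the two integral functors whose Kn\"orrer transports are recorded in Lemma~\ref{lem:ef}, to convolve their matrix factorization kernels over the triple product, and then to show that the convolution is a sheaf placed in cohomological degree zero. First, using $\be^{(i)}=\be^{k,k-i}$ and $\Theta^{\{i\}}=\bbf^{\{i\}}\circ\be^{(i)}$, I would write
\begin{equation*}
    \Psi_{k}\Theta^{\{i\}}\Psi_{k}^{-1}=\bigl(\Psi_{k}\bbf^{\{i\}}\Psi_{k-i}^{-1}\bigr)\circ\bigl(\Psi_{k-i}\be^{k,k-i}\Psi_{k}^{-1}\bigr).
\end{equation*}
By Lemma~\ref{lem:ef}(1) and (3) the two factors are integral functors of matrix factorizations with kernels supported on $W_{k-i,k}$ and $W_{k-i,k}'$; their composition is again integral, with kernel the convolution $p_{13,*}\bigl(p_{12}^{*}(-)\otimes^{L}p_{23}^{*}(-)\bigr)$ over the triple product $(X_{k}^{+}\times\gl_{k})\times(X_{k-i}^{+}\times\gl_{k-i})\times(X_{k}^{-}\times\gl_{k})$. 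As in the proof of Lemma~\ref{lem:ef}, the two intermediate copies of $w_{k-i}$ agree on the overlap $I_{k-i}$ thanks to the intertwining relations (\ref{eq:intcond1}) and (\ref{eq:intcond2}), so a twisted structure sheaf of $I_{k-i}$ is a legitimate matrix factorization kernel over $(X_{k}^{+}\times\gl_{k})\times(X_{k}^{-}\times\gl_{k})$.

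Next I would identify this convolution explicitly. Since the fibre product defining $I_{k-i}$ is Tor-independent (as noted before Lemma~\ref{lem:ef}), the derived tensor product $p_{12}^{*}\cO_{W_{k-i,k}}\otimes^{L}p_{23}^{*}\cO_{W_{k-i,k}'}$ is the plain structure sheaf $\cO_{I_{k-i}}$, with no higher Tor. Pulling the line-bundle twists out of $p_{13,*}$ by the projection formula and collecting the determinants and internal shifts from Lemma~\ref{lem:ef}(1),(3), the factors $\det(V_{k-i})^{-(k-i)}$ and $\det(V_{k-i})^{k-i}$ cancel, the factors $\det(\bC^{N})^{-i}$ and $\det(\bC^{N})^{i}$ cancel, and the shifts add up to $(-ik)+(ik-i^{2})=-i^{2}$. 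This leaves the kernel
\begin{equation*}
    Rp_{13,*}\cO_{I_{k-i}}\otimes\det(V_{k})^{k}\otimes\det(V_{k}')^{-k}\langle -i^{2}\rangle,
\end{equation*}
which is the asserted one once we know $Rp_{13,*}\cO_{I_{k-i}}$ sits in cohomological degree zero.

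The hard part is therefore to prove $R^{q}p_{13,*}\cO_{I_{k-i}}=0$ for $q>0$. From the quiver picture (\ref{eq:quiver}), and using that $\beta_{k}\epsilon_{k}'=\epsilon_{k}\beta_{k}$ forces $\epsilon_{k}$ to preserve $\on{im}(\beta_{k})$, the fibre of $p_{13}$ over a point of $I_{k}=W_{k,k}'$ is the variety of $\epsilon_{k}$-invariant $(k-i)$-planes in $V_{k}$ containing $\on{im}(\beta_{k})$. Globally, over the relative Grassmannian $p\colon\Gr(k-i,V_{k})\to I_{k}$ with tautological sequence $0\to S\to p^{*}V_{k}\to Q\to 0$, the subscheme $I_{k-i}$ is cut out by the tautological section of $\bigl((p^{*}V_{k}')^{\vee}\otimes Q\bigr)\oplus\bigl(S^{\vee}\otimes Q\bigr)$, the two summands imposing $\on{im}(\beta_{k})\subseteq S$ and $\epsilon_{k}(S)\subseteq S$. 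I would check this section is regular by a dimension count against the determinantal strata $C_{j}\subseteq I_{k}$, Koszul-resolve $\cO_{I_{k-i}}$ accordingly, and then compute $Rp_{13,*}\cO_{I_{k-i}}$ term by term: expanding the exterior powers of $(p^{*}V_{k}'\otimes Q^{\vee})\oplus(S\otimes Q^{\vee})$ by the Cauchy formula into Schur functors of $S$ and $Q^{\vee}$ and pushing forward along $p$ by relative Bott vanishing, one must see that every surviving Schur summand contributes in exactly the cohomological degree matching its position in the Koszul complex, so that the hypercohomology spectral sequence collapses onto degree zero. Verifying this last combinatorial cancellation is the main obstacle; alternatively, one can observe that $I_{k-i}$, being a regular zero locus in a smooth Grassmannian bundle, is a Cohen--Macaulay local complete intersection, and argue that $p_{13}$ is a rational resolution of the determinantal locus $C_{\leq k-i}$, which both yields the vanishing and identifies $Rp_{13,*}\cO_{I_{k-i}}=\cO_{C_{\leq k-i}}$.
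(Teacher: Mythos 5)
The bookkeeping in your first two paragraphs is exactly what the paper does: compose the two kernels from Lemma~\ref{lem:ef}, use the Tor-independence/local-complete-intersection observation to identify the derived tensor product with $\cO_{I_{k-i}}$, and cancel the $\det(V_{k-i})$ and $\det(\bC^{N})$ factors and add the shifts to get $\langle -i^{2}\rangle$. That part is correct and matches the paper.

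The gap is the vanishing $R^{q}p_{13,*}\cO_{I_{k-i}}=0$ for $q>0$, which you correctly identify as the hard part but do not actually establish. Your route (a) is left as an unverified ``combinatorial cancellation,'' and your route (b) contains a false claim: $Rp_{13,*}\cO_{I_{k-i}}$ is \emph{not} isomorphic to $\cO_{C_{\leq k-i}}$ in general. The paper explicitly remarks (right after this proposition) that for $i\neq 0,k$ the components $C_{\leq k-i}$ are not normal and the fibres of $p_{13}$ are not even connected, so $p_{13}$ is not a rational resolution and the direct image is not the structure sheaf of $C_{\leq k-i}$ (nor of its normalization). Any argument that forces the pushforward to be $\cO_{C_{\leq k-i}}$ must therefore be wrong somewhere; connectedness of fibres and normality of the image, which a ``rational resolution'' argument needs, both fail here. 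The paper's actual mechanism is different and more robust: it computes the canonical bundle $\omega_{I_{k-i}}\cong\det(V_{k})^{N}\otimes\det(V_{k}^{\prime})^{-N}\langle -2(k^{2}+i^{2})\rangle$ (by assembling the contributions of the intertwining equations, the linear fibre directions, and the flag varieties), observes that this line bundle does not involve $V_{k-i}$ and hence is pulled back along $p_{13}$ from the image, and then applies the Kawamata--Viehweg vanishing theorem in the form used by Cautis--Kamnitzer--Licata to conclude $p_{13,*}\cO_{I_{k-i}}=R^{0}p_{13,*}\cO_{I_{k-i}}$ --- without claiming anything about what that zeroth direct image is. To repair your proof you should replace both of your suggested routes by this canonical-bundle computation (or supply the Bott-vanishing bookkeeping in route (a) in full, which would be considerably more work and still must not output $\cO_{C_{\leq k-i}}$).
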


\begin{proof}
    The determinant bundles and the degree shift can be obtained from Lemma~\ref{lem:ef} directly. 
    It remains to show that 
    \[
    p_{13,*} (p_{12}^{*} \cO_{W_{k-i,k}} \otimes p_{23}^{*} \cO_{W_{k-i,k}^{\prime}}) \cong R^{0}p_{13,*}\cO_{I_{k-i}}.
    \]
    The intersection $I_{k-i}$ is local complete of the expected dimension $2kN$ (after modulo the group action of $G_{k-i}\times G_{k} \times G_{k}$), so 
    \begin{equation*}
        p_{12}^{*} \cO_{W_{k-i,k}} \otimes p_{23}^{*} \cO_{W_{k-i,k}^{\prime}} \cong  \cO_{I_{k-i}}. 
    \end{equation*}
    Now, the canonical bundle of $I_{k-i}$ (modulo the group action) is given by
    \[
    \det(V_{k})^{N}\otimes \det(V_{k}^{\prime})^{-N} \langle -2(k^{2}+i^{2}) \rangle,
    \]
    which can be computed as follows. 
    Firstly, $I_{k-i}$ is cut out by the conditions (\ref{eq:intcond1}) and  (\ref{eq:intcond2}), so the canonical bundle of $I_{k-i}$ is isomorphic to the tensor product of 
    \[\det(V_{k-i}^{\vee}\otimes V_{k})\otimes \det((V_{k}^{\prime})^{\vee}\otimes V_{k-i}) \langle 4k(k-i)\rangle\]
    with the canonical bundle of the total space of $\Hom(V_{k}^{\prime},V_{k-i})\times \gl_{k}\times \gl_{k-i}\times \gl_{k}$ over the partial flag varieties $\Fl(V_{k-i}, V_{k};\bC^{N})\times \Gr(\bC^{N},V_{k}^{\prime})$. 
    The vector bundle part of this total space contributes
    \[\det(V_{k}^{\prime}\otimes V_{k-i}^{\vee}) \langle -4k^{2}-2(k-i)^{2} \rangle\]
    to its canonical bundle. 
    The underlying partial flag variety $\Fl(V_{k-i}, V_{k};\bC^{N})$ can be viewed as a relative $\Gr(V_{k}/V_{k-i}, \bC^{N}/V_{k-i})$-bundle over $\Gr(V_{k-i},\bC^{N})$. 
    Tensoring the above two lines of  determinant line bundles with the canonical bundles of the Grassmannians
    \[
    \omega_{\Gr(V_{k}/V_{k-i}, \bC^{N}/V_{k-i})} \otimes \omega_{\Gr(V_{k-i},\bC^{N})} \otimes \omega_{\Gr(\bC^{N},V_{k}^{\prime})}
    \]
    yields $\omega_{I_{k-i}}$.  
    
    Since this canonical bundle of $I_{k-i}$ is free of $V_{k-i}$, it is the pullback of the same line bundle from $C_{\leq k-i}$. 
    By the Kawamata--Viehweg vanishing theorem (see \cite[p.95]{CKL-sl2}), there is no higher cohomology 
    $p_{13,*} \cO_{I_{k-i}} = {R}^{0} p_{13,*} \cO_{I_{k-i}}$.
\end{proof}

\begin{rmk} 
    Note that the deepest component $C_{0}$  is always smooth and $p_{13}$ is isomorphic over it  $p_{13,*}\cO_{I_{0}} \cong \cO_{C_{0}}$. 
    In general, the components $C_{\leq k-i}$ are not normal, nor are the fibres of $p_{13}$ connected. 
    So, unlike in \cite[Proposition 6.3]{CKL-sl2} where the image is the normalization of a single component (i.e.\ the component $\fZ^{(i)}$ mentioned in Introduction), the direct image $p_{13,*} \cO_{I_{k-i}}$ for $i\neq 0,k$ is not isomorphic to the structure sheaf of the normalization of $C_{\leq k-i}$. 
\end{rmk}

In conclusion, the twisted Rickard complex $\Psi_{k} \Theta^{\prime} \Psi_{k}^{-1}$ is induced by the corresponding complex of kernels (setting $p=p_{13}$)
\begin{equation}\label{eq:Ricker}
    p_{*}\cO_{I_{0}} \langle -k^{2}-k \rangle \xrightarrow{d_{k}}  \cdots \xrightarrow{d_{i+1}} p_{*}\cO_{I_{k-i}} \langle -i^{2}-i \rangle \xrightarrow{d_{i}} \cdots \xrightarrow{d_{1}} \cO_{I_{k}}, 
\end{equation}
up to tensoring with the common line bundle $\det(V_{k})^{k} \otimes \det(V_{k}^{\prime})^{-k}$. 
The differentials $d_{i}$ (by abuse of notation) are transported from the original Rickard complex $\Theta^{\prime}$ via the Kn{\"o}rrer periodicity equivalence $\Psi_{k}$.
We also merge the degree shifts $\langle -i^{2} \rangle [-i]$ since they are the same on these sheaves in the matrix factorization category.

\begin{rmk}[$k=1$] \label{thm:k=1}
    In the abelian case, the cone of this complex (\ref{eq:Ricker}) of kernels can be easily understood. 
    We give a sketch proof that there is an exact triangle 
    \[
    \cO_{C_{0}}\langle -2 \rangle  \xlongrightarrow{d_{1}} \cO_{I_{1}} \longrightarrow \cO_{C_{1}}. 
    \]
    The correspondence $I_{1}$ is cut out by the equation $\beta_{1}(\epsilon_{1} -\epsilon^{\prime}_{1})=0$, consisting of two components $C_{1} =\{\epsilon_{1} = \epsilon^{\prime}_{1} \}$ and $C_{0} = \{ \beta_{1} =0\}$. 
    So, a natural weight two map $\cO_{C_{0}} \to \cO_{I_{1}}$ that completes into the above triangle is given by multiplying $\epsilon_{1} -\epsilon^{\prime}_{1}$ (remember that $\bC^{\times}$ acts by squared dilation on $\epsilon_{1}, \epsilon_{1}^{\prime}$).  
    This map is actually equivalent to $d_{1}$ up to a scalar because  
    \begin{align*}
        \Ext^{0}_{I_{1}}(\cO_{C_{0}}\langle -2 \rangle, \cO_{I_{1}})^{\bC^{\times}} & \cong \Ext^{0}_{C_{0}}(\cO_{C_{0}} \langle -2 \rangle, i^{!} \cO_{I_{1}})^{\bC^{\times}} \\
        & \cong H^{0}_{\bC^{\times}}(C_{0}) \cong \bC,
    \end{align*}
    where the first isomorphism comes from the adjunction $i_{*} \dashv i^{!}$ for the embedding $i: C_{0} \to I_{1}$, and the second line is implied by $H^{0}_{\bC^{\times}}(C_{0}) = H^{0}(\Gr(1,\bC^{N})\times \Gr(\bC^{N},1))$. 
\end{rmk}

This observation for $k=1$, together  with Remark~\ref{rmk:k=1}, leads us to speculate that the correct kernel to induce both the window equivalence $\bW_{[-k,N-k)}$ and the twisted equivalence $\Psi_{k} \bT^{\prime} \Psi^{-1}_{k}$ is simply given by the sheaf 
\[
\cO_{C_{k}} \otimes \det(V_{k})^{k} \otimes \det(V_{k}^{\prime})^{-k}.
\]
For $k=2$, it is not obvious to the author that the convolution of the complex (\ref{eq:Ricker})
is quasi-isomorphic to $\cO_{C_{2}}$ (as matrix factorizations).
However, Segal \cite{Seg} showed that $\cO_{C_{2}}$ indeed induces the window equivalence $\bW_{[0,N)}$ in this case. 
We will discuss this in more detail in the next work. 

It is worth mentioning that the correspondence $I_{k}= W_{k,k}^{\prime}$ is potentially related to the `partial compactification' (\cite{BDF,BDF-2}) of the $G_{k}$-action on the LG model $(X_{k}\oplus \gl_{k},w_{k})$. 
We hope our work offers some insights for further comparison.

\subsection{Extension to stacks}\label{sec:1}
Let $\iota_{\pm}: X_{k}^{\pm}\times \gl_{k} \hookrightarrow X_{k} \oplus \gl_{k}$ be the inclusions.
Given its definition
\[
\bW_{\delta}: \MF_{G_{k} \times \bC^{\times}}(X^{+}_{k} \times \gl_{k}, w_{k})
\xlongrightarrow{(\iota_{+}^{*})^{-1}} 
\cW_{\delta} 
\xlongrightarrow{\,\,\,\,\,\,\iota_{-}^{*}\,\,\,\,\,\,}
\MF_{G_{k} \times\bC^{\times}}(X^{-}_{k} \times \gl_{k}, w_{k}), 
\]
one should expect (see also \cite[\S 2.3]{HL}) the window equivalence to be induced by a kernel of the form $(\on{id}\times \iota_{-})^{*}F$, which is the restriction of a matrix factorization kernel
\[
F \in \MF_{G_{k} \times G_{k} \times \bC^{\times}} ( (X^{+}_{k} \times \gl_{k}) \times (X_{k} \oplus \gl_{k}), p_{2}^{*}w_{k} - p_{1}^{*}w_{k} )
\]
from the larger space $(X^{+}_{k} \times \gl_{k}) \times (X_{k} \oplus \gl_{k})$. 
Moreover, the kernel is characterized by the following two conditions.
\begin{enumerate}[wide, labelindent=0pt, label = (\roman*)]
    \item The kernel restricts to the diagonal $\Delta$ of $X^{+}_{k} \times \gl_{k}$, i.e.\ $(\on{id}\times \iota_{+})^{*} F \cong \cO_{\Delta}$.  
    \item The essential image of the functor $\Phi_{F}$ lies in the window subcategory $\cW_{\delta}$. 
\end{enumerate}

In this subsection, we first describe an obvious extension of the kernels of the twisted complex $\Psi_{k} \Theta^{\prime} \Psi_{k}^{-1}$ to $(X^{+}_{k} \times \gl_{k}) \times (X_{k} \oplus \gl_{k})$.
After that, we give a proof of the main theorem by verifying that the convolution of this extended complex of kernels satisfies the above two characterization conditions.

Intuitively, the extension is obtained by simply relaxing the surjectivity condition over the tautological direction $\bC^{N} \twoheadrightarrow V_{k}^{\prime}$ throughout our previous constructions.
Suppose in the affine space
\begin{align}\label{eq:R}
    \Hom(V_{k},\bC^{N}) & \oplus \Hom(\bC^{N},V_{k}^{\prime})  \oplus \Hom(V_{k-i},V_{k})    \\ & \oplus \Hom(V_{k}^{\prime},V_{k-i}) \oplus \gl(V_{k-i}) \oplus \gl(V_{k}) \oplus \gl(V_{k}^{\prime}) \notag
\end{align}
a typical point is denote by $(a,b,\alpha_{k-i},\beta_{k-i},\epsilon_{k-i},\epsilon_{k},\epsilon_{k}^{\prime})$ in order (cf.\ (\ref{eq:quiver})). 
For each $0\leq i \leq k$, consider the following locally closed subvariety 
of (\ref{eq:R})
\[
\dR_{k-i} = \left\{ a, \alpha_{k-i} \text{ of full rank and } \epsilon_{k}|_{V_{k-i}} = \epsilon_{k-i} \right\}. 
\]
Equip $\dot{R}_{k-i}$ with the potential 
\begin{equation*}
    \upsilon_{k-i}: (a,b,\alpha_{k-i},\beta_{k-i},\epsilon_{k-i},\epsilon_{k},\epsilon_{k}^{\prime})  \longmapsto \on{Tr} \left( ba\alpha_{k-i}(\beta_{k-i}\epsilon_{k}^{\prime}- \epsilon_{k-i}\beta_{k-i}) \right),
\end{equation*}
and consider the following local complete intersections
\[
\dI_{k-i} = \{\beta_{k-i}\epsilon_{k}^{\prime} -  \epsilon_{k-i}\beta_{k-i} = 0\}, \quad \dJ_{k-i} = \{ba\alpha_{k-i}=0\}  
\]
in this LG model. 
As explained in \S\ref{sec:mf}, there is a canonical isomorphism of matrix factorizations 
\begin{equation}\label{eq:xy}
    \cO_{\dI_{k-i}} \cong \cO_{\dJ_{k-i}} \otimes \det(V_{k-i})^{-k} \otimes \det(V_{k}^{\prime})^{k-i} [-k(k-i)]
\end{equation}
in $\MF_{G_{k-i}\times G_{k}^{2} \times \bC^{\times}} ({\dR}_{k-i},\upsilon_{k-i})$ since they have a common Koszul resolution. 

Note that because of the relaxation of the surjectivity condition, $\dI_{k}\subset \dR_{k}$ are no longer correspondences. 
However, the previous embedding is still a well-defined morphism
\begin{align*}
    \pi: \dR_{k} & \longrightarrow (X_{k}^{+}\times \gl_{k}) \times (X_{k}\oplus \gl_{k}), \\ 
    (a,b,\beta_{k},\epsilon_{k},\epsilon_{k}^{\prime}) & \longmapsto ((a,\beta_{k} b, \epsilon_{k}),(a \beta_{k}, b, \epsilon_{k}^{\prime})). 
\end{align*}
By forgetting $V_{k-i}$ and composing $\beta_{k-i}\circ \alpha_{k-i}$, we have a projection (denoted by $p_{13}$ previously)
\[
p=p_{k-i}: \dR_{k-i} \longrightarrow \dR_{k}, \quad p(\dI_{k-i}) \subset \dI_{k}. 
\]
By Proposition~\ref{prop:Dc}, the image $(\pi\circ p)_{*} \cO_{\dI_{k-i}} \langle -i^{2} \rangle$ restricts to the kernel $p_{13,*}\cO_{I_{k-i}}\langle -i^{2} \rangle$ of $\Psi_{k}\Theta^{\{i\}} \Psi_{k}^{-1}$  along the inclusion $\on{id}\times \iota_{-}$, up to tensoring with $\det(V_{k})^{k}\otimes \det(V_{k}^{\prime})^{-k}$. 

\begin{lem}\label{lem:diff}
    For $i\geq 1$, there exists a unique extension of the differential 
    \[
    d_{i}:  p_{*} \cO_{\dI_{k-i}} \langle -i^{2}-i \rangle \longrightarrow p_{*} \cO_{\dI_{k-i+1}} \langle -(i-1)^{2} -(i-1) \rangle. 
    \]
\end{lem}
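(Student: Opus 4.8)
The plan is to reduce the lemma to the statement that the restriction map along $\on{id}\times\iota_{-}$,
\begin{multline*}
\Hom\!\big((\pi p)_{*}\cO_{\dI_{k-i}}\langle -i^{2}-i\rangle,\,(\pi p)_{*}\cO_{\dI_{k-i+1}}\langle -(i-1)^{2}-(i-1)\rangle\big)\\
\longrightarrow\Hom\!\big(p_{13,*}\cO_{I_{k-i}}\langle -i^{2}-i\rangle,\,p_{13,*}\cO_{I_{k-i+1}}\langle -(i-1)^{2}-(i-1)\rangle\big),
\end{multline*}
is injective and has $d_{i}$ in its image: the first gives uniqueness, the second gives existence. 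Since $\pi$ is an embedding, the left-hand group is computed on the LG model $(\dR_{k},\upsilon_{k})$ as the group of morphisms $p_{*}\cO_{\dI_{k-i}}\to p_{*}\cO_{\dI_{k-i+1}}$ of matrix factorizations, up to the common twist by $\det(V_{k})^{k}\otimes\det(V_{k}^{\prime})^{-k}$ and the indicated shifts; one may also use the Koszul equivalence (\ref{eq:xy}) to recast everything in terms of the complete intersections $\dJ_{k-j}=\{ba\alpha_{k-j}=0\}$, whose structure sheaves carry explicit relative Koszul resolutions over the flag varieties.

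For existence I would build the extension $\dot d_{i}$ by hand, following the recipe of \S\ref{sec:sl2} that defines $d_{i}$. Inspecting that recipe, the ingredients that involve the surjectivity of $b\colon\bC^{N}\to V_{k}^{\prime}$ are the two transported functors $\bbf^{\{i\}},\bbf^{\{i-1\}}$ and the composition isomorphism relating them that is used to include $\bbf^{\{i\}}$ in its lowest cohomological degree; the decomposition of $\be^{(i)}$ and the middle counit $\varepsilon\colon\bbf^{(1)}\be^{(1)}\to\on{id}$ live entirely on the $\theta_{+}$-side correspondences and transfer unchanged. Relaxing the surjectivity of $b$, one checks — by the same Tor-independence and fibre-product computations already carried out for $I_{k-i}$ and for the diagram (\ref{eq:torind}) — that this composition isomorphism persists, so the recipe produces a morphism $\dot d_{i}$ of the asserted shape, and by construction it restricts to $d_{i}$ along $\on{id}\times\iota_{-}$.

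For injectivity I would argue that any morphism killed by restriction is a map $p_{*}\cO_{\dI_{k-i}}\to p_{*}\cO_{\dI_{k-i+1}}$ whose image is supported on the complement $\{\on{rk}(b)\le k-1\}\subset\dR_{k}$ of the semistable locus; if the target sheaf $p_{*}\cO_{\dI_{k-i+1}}$ is torsion-free along that locus, such a map must vanish. So the crux is the absence of torsion in $p_{*}\cO_{\dI_{k-i+1}}$ along the degeneracy locus of $b$, which I would extract from the interplay of the Lascoux-type resolution of $p_{*}\cO_{\dI_{k-i+1}}$ provided by Theorem~\ref{thm:torsion}, applied relatively over $\Fl(V_{k-i+1},V_{k};\bC^{N})\times\Gr(\bC^{N},V_{k}^{\prime})$, with the Kawamata--Viehweg / Grauert--Riemenschneider vanishing already invoked in the proof of Proposition~\ref{prop:Dc}, which identifies this sheaf with the honest degree-zero direct image. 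A more concrete alternative, in the spirit of Remark~\ref{thm:k=1}, is to present $d_{i}$ as an explicit multiplication map on an intermediate space carrying a two-step flag $V_{k-i}\subseteq V_{k-i+1}\subseteq V_{k}$ and to reduce the $\bC^{\times}$-invariant $\Ext^{0}$, by adjunction for the relevant embedding, to $H^{0}$ of a product of Grassmannians, which is one-dimensional.

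The hard part will be precisely this torsion-freeness (equivalently, depth $\ge 1$ along $\{\on{rk}(b)\le k-1\}$). As recorded in the remark after Proposition~\ref{prop:Dc}, the sheaves $p_{*}\cO_{\dI_{k-i}}$ are structure sheaves of reducible, generally non-normal varieties with disconnected $p$-fibres, so one cannot simply pass to a smooth model — the explicit resolutions and the vanishing theorem are genuinely needed. What must be shown is that the extra strata where $b$ drops rank, which are exactly the ones separating $\dR_{k}$ from the semistable locus, contribute nothing to $\Hom^{0}$ in the $\bC^{\times}$-weight carrying $d_{i}$; this single fact is what both forces the extension to exist and pins it down uniquely.
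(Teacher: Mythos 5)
Your framing---reduce the lemma to showing that restriction along $\on{id}\times\iota_{-}$ is injective on the relevant $\Hom$ group and hits $d_{i}$---is the right one, but both halves of your execution have gaps, and you miss the single mechanism the paper uses to get existence and uniqueness simultaneously. The paper's proof passes from $\cO_{\dI_{k-i}}$ to $\cO_{\dJ_{k-i}}$ via the Koszul isomorphism (\ref{eq:xy}), cuts down to the open locus $\dJ_{k-i}^{\circ}$ where $\dim\ker(\beta_{k-i})+\dim\ker(\tau)\leq k+1$ (whose complement has codimension $\geq 2$, so the $S_{2}$ property gives $p_{*}\cO_{\dJ_{k-i}}\cong (p\circ\iota)_{*}\cO_{\dJ_{k-i}^{\circ}}$ and $p$ is an isomorphism there), and identifies the whole $\Hom$ group with $H^{0}(\cO_{\dJ_{k-i}^{\circ}\cap\dJ_{k-i+1}^{\circ}})$. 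The punchline is then a Hartogs-type observation: since the complement of $\Gr(\bC^{N},V_{k}^{\prime})$ in $[\Hom(\bC^{N},V_{k}^{\prime})/G_{k}]$ has codimension $\geq 2$, this $H^{0}$ is the same whether computed relatively over the stack or after restricting to the semistable locus. Restriction is therefore an isomorphism on $\Hom$, and the extension of $d_{i}$ exists and is unique with no need to reconstruct anything.

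Concretely, your two halves fail as stated. For existence, re-running the recipe of \S\ref{sec:sl2} over the stack requires extending the decomposition $\bbf^{\{i-1\}}\bbf^{(1)}\simeq\bbf^{\{i\}}\otimes H^{*}(\Gr(1,N-2k+i))$ beyond the locus where $b$ is surjective; this isomorphism is proved in CKL by geometric arguments specific to the cotangent bundles, and "the same Tor-independence and fibre-product computations" do not obviously deliver it on $[X_{k}\oplus\gl_{k}/G_{k}]$ --- this is precisely the kind of work the paper's argument is designed to avoid. For uniqueness, torsion-freeness of the target \emph{sheaf} along $\{\on{rk}(b)\leq k-1\}$ only controls sheaf maps, whereas the morphisms here live in the matrix factorization category: after unwinding (\ref{eq:xy}) the $\Hom$ group is $\Hom(\cO_{\dJ_{k-i}^{\circ}}[-k],\cO_{\dJ_{k-i+1}^{\circ}}\otimes\det N^{\vee})$, an $\Ext$-type group, so you must control local cohomology of the full $\Hom$ complex along the unstable locus, not just $\mathcal{H}om^{0}$. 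The codimension-$\geq 2$ statement that does this (namely $\on{codim}\{\on{rk}(b)\leq k-1\}=N-k+1\geq 2$) is exactly the "hard part" you defer, and it is supplied by elementary determinantal-locus dimension counts rather than by the Lascoux resolutions or Kawamata--Viehweg vanishing you propose to invoke.
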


\begin{proof}
    Set $\tau=ba$. 
    Given the isomorphism (\ref{eq:xy}), it is equivalent to take the direct image of the sheaf $\cO_{\dJ_{k-i}} \otimes \det(V_{k-i})^{-k} \otimes \det(V_{k}^{\prime})^{k-i} [-k^{2}+ik]$.  
    The following argument is inspired by \cite{Cautis}.
    By definition, $\dJ_{k-i}$ is the locus $\{\tau\alpha_{k-i} = 0\}$ in $\dR_{k-i}$, so we have $\dim\ker(\tau)\geq k-i$.
    On the other hand, $\beta_{k-i}$ has image in $V_{k-i}$, so the inequality $\dim\ker(\beta_{k-i}) + \dim\ker(\tau) \geq k$ always holds over $\dJ_{k-i}$. 
    Consider the open subvariety  
    \begin{equation*}
        \iota: \dJ_{k-i}^{\circ} = \{ \dim\ker(\beta_{k-i}) + \dim\ker(\tau) \leq k+1 \} \hookrightarrow   \dJ_{k-i}. 
    \end{equation*}
    The complement of $\dJ_{k-i}^{\circ}$ in $\dJ_{k-i}$ is covered by the following loci: 
    (i) $\{ \on{rk}(\beta_{k-i}) \leq k-i-2\}$; 
    (ii) $\{ \dim\ker(\tau) \geq k-i+2 \}$; 
    (iii) $\{\on{rk}(\beta_{k-i}) =k-i-1,  \dim\ker(\tau) = k-i+1 \}$; cf.\ \cite[Lemma 3.1]{Cautis}. 
    Hence, the complement has codimension at least two by using the dimension formula of determinantal loci. 
    By the property of $S_2$  sheaves \cite[\S 2.5]{Cautis}, we have 
    \[
    p_{*}\cO_{\dJ_{k-i}} \cong (p\circ \iota)_{*} \cO_{\dJ_{k-i}^{\circ}}.
    \]
    Moreover, the partial resolution $p$ is isomorphic over $\dJ_{k-i}^{\circ}$ since we can recover $V_{k-i}$ either as the image of $\beta_{k}$ if it is of rank $k-i$, or as the kernel of $\tau$ otherwise. 
    With this, we can calculate  
    \begin{align}
        &\Hom\left(
        p_{*}\cO_{\dI_{k-i}}\langle -i^{2} -1\rangle, p_{*}\cO_{\dI_{k-i+1}} \langle -(i-1)^{2} \rangle \right) \notag \\
        \cong & \Hom\left(
        \cO_{\dJ_{k-i}^{\circ}}[-k], \cO_{\dJ_{k-i+1}^{\circ}} \otimes \det N_{\dJ_{k-i}^{\circ}\cap \dJ_{k-i+1}^{\circ}/\dJ_{k-i+1}^{\circ}}^{\vee}\right) \notag \\
        \cong & H^{0}\left(\cO_{\dJ_{k-i}^{\circ}\cap \dJ_{k-i+1}^{\circ}}\right),\label{eq:dcoh}
    \end{align}
    where in the first isomorphism, we used the formula
    \[
    \det N_{\dJ_{k-i}^{\circ}\cap \dJ_{k-i+1}^{\circ}/\dJ_{k-i+1}^{\circ}}^{\vee} \cong \det\left(V_{k}^{\prime}\otimes (V_{k-i+1}/V_{k-i})^{\vee}\right) \langle -2(k-i) \rangle,
    \]
    and the second isomorphism follows from \cite[Lemma 4.7]{CK}. 
    As the complement of $\Gr(\bC^{N},V_{k}^{\prime}) $ in $ [\Hom(\bC^{N},V_{k}^{\prime})/G_{k}]$ has codimension at least two, it is equivalent to calculate the cohomology group (\ref{eq:dcoh}) either relatively over $[\Hom(\bC^{N},V_{k}^{\prime})/G_{k}]$ or further restricted to $\Gr(\bC^{N},V_{k}^{\prime})$.
    \end{proof}

    \begin{rmk}\label{rmk:d!}
        In fact, the differential $d_{i}$ is uniquely determined up to a scalar.
        It is a $(\bC^{\times} \times \bC^{\times})$-equivariant map where
        the first $\bC^{\times}$ acts on $\gl_{k}$ by squared dilation (which also provides the grading on matrix factorizations),
        and the second $\bC^{\times}$-action is transported from the conical structure of the cotangent bundles of Grassmannians (i.e.\ induced by the scaling on $X_{k}$, see \cite[\S 2.4]{Cautis}) through the Kn{\"o}rrer periodicity. 
        Then, it is straightforward to see that
        \begin{equation}\label{eq:d!}
            d_{i} \in H^{0}_{\bC^{\times}\times \bC^{\times}} \left( \cO_{\dJ_{k-i}^{\circ}\cap \dJ_{k-i+1}^{\circ}} \right)    \cong   \bC. 
        \end{equation}
    \end{rmk}

Now, we have a complex of sheaves
\begin{equation*}
    p_{*} \cO_{\dI_{0}}\langle -k^{2}-k \rangle \xrightarrow{d_{k}} \cdots \xrightarrow{d_{i+1}} p_{*} \cO_{\dI_{k-i}} \langle -i^{2}-i \rangle \xrightarrow{d_{i}} \cdots \xrightarrow{d_{1}} \cO_{\dI_{k}}
\end{equation*}
on $\dR_{k}$, whose direct image under $\pi$ extends the twisted Rickard complex (\ref{eq:Ricker}).
Denote the iterative cones in its Postnikov system (\ref{eq:Pos})  by 
\[
\cT^{(i)} = \on{Conv}\left( p_{*} \cO_{\dI_{k-i}} \langle -i^{2}-i \rangle \xrightarrow{d_{i}} \cdots \xrightarrow{d_{1}} \cO_{\dI_{k}} \right), \quad i = 0, \cdots,k. 
\]
Taking into account the common tensor factor $\det(V_{k})^{k}\otimes \det(V_{k}^{\prime})^{-k}$, the main theorem is now equivalent to the statement that $(\on{id}\times \iota_{-})^{*}\pi_{*}\cT^{(k)}$ induces the equivalence $\bW_{[0,N)}$. 

\begin{lem}
    The convolution $\cT^{(k)}$ restricts to the diagonal $\Delta$ of $X^{+}_{k} \times \gl_{k}$, i.e.\ 
    \[
    (\on{id}\times \iota_{+})^{*} \pi_{*} \cT^{(k)} \cong \cO_{\Delta}. 
    \]
\end{lem}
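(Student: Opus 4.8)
The plan is to restrict the whole extended complex of kernels to the diagonal copy $X_k^+\times\gl_k$ (via $\on{id}\times\iota_+$) and show that restriction collapses the convolution onto its right end term, whose restriction is $\cO_\Delta$. Concretely, the morphism $\pi\colon\dR_k\to (X_k^+\times\gl_k)\times(X_k\oplus\gl_k)$ composed with $\on{id}\times\iota_+$ factors through the preimage of the diagonal; over $X_k^+\times\gl_k$ the condition ``$a$ of full rank'' holds on \emph{both} factors, so the tautological surjection $\bC^N\twoheadrightarrow V_k^{\prime}$ is available, and we are back in the correspondence picture of \S\ref{sec:kernel}. Thus $(\on{id}\times\iota_+)^*\pi_*\cT^{(k)}$ is computed from the restriction of the original twisted Rickard complex of kernels \eqref{eq:Ricker} to the diagonal — i.e.\ from the kernels $(\on{id}\times\iota_+)^*p_{13,*}\cO_{I_{k-i}}$, which are supported on $C_{\le k-i}$ pulled back to the diagonal.

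The key step is to identify, after restriction to the diagonal, each $(\on{id}\times\iota_+)^*p_{13,*}\cO_{I_{k-i}}$ (with its determinant twists and shift $\langle -i^2-i\rangle$) with the corresponding term of the \emph{restricted} Rickard complex of functors $\Theta^{\prime}$ evaluated against the identity — so the convolution on the diagonal is the restriction of $\bT^{\prime}$-as-a-kernel pulled back along $\Delta\times\Delta$ type loci. But one can argue more directly: the functor side already tells us the answer. Since each $\Theta^{\{i\}}$ is $\bbf^{\{i\}}\be^{(i)}$ and, restricted to the diagonal of $X_k^+\times\gl_k$ (equivalently, composed with $\iota_+^*$ on both sides), the counit differentials $d_i$ reassemble the Postnikov system whose convolution is $\bT^{\prime}$ itself by Theorem~\ref{thm:CKL}. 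On the diagonal the two semistable loci literally coincide (the prime is cosmetic via the LMN isomorphism restricted to the locus where $a$ is injective), so $\bT^{\prime}$ becomes the identity functor there, whose Fourier–Mukai kernel is $\cO_\Delta$. Transporting this through $\Psi_k$ using Lemma~\ref{lem:fun} and the compatibility of the potentials gives $(\on{id}\times\iota_+)^*\pi_*\cT^{(k)}\cong\cO_\Delta$.

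To make the first paragraph rigorous I would: (1) check the scheme-theoretic fibre product identity $(\on{id}\times\iota_+)^{-1}\pi(\dR_{k-i}) \cong W_{k-i,k}\times_{X_k^+\times\gl_k}W_{k-i,k}^{\prime}$ restricted to the diagonal, using Tor-independence as in the proof of Proposition~\ref{prop:Dc}; (2) invoke base change to commute $(\on{id}\times\iota_+)^*$ past $\pi_*p_*$; (3) recognise the resulting complex of kernels, term by term, as $\Psi_k$ applied to the kernels of $\Theta^{\{i\}}\colon\Db(\TGr{k}{\bC^N})\to\Db(\TGr{k}{\bC^N})$ (note the target is now the \emph{same} Grassmannian after restriction, since $\iota_+$ lands us on $\TGr{k}{\bC^N}$ on both sides), with differentials the $\Psi_k$-transports of the counit maps $d_i$ — this is exactly where Remark~\ref{rmk:d!} and Lemma~\ref{lem:diff} pin the differentials down to scalars, so no ambiguity survives; (4) conclude by Theorem~\ref{thm:CKL} that the convolution of this restricted complex is the kernel of $\on{Conv}(\Theta)$ for the \emph{trivial} flop $\TGr{k}{\bC^N}\to\TGr{k}{\bC^N}$, namely $\cO_\Delta$.

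The main obstacle I anticipate is step (3)–(4): showing that the restriction to the diagonal genuinely produces the honest Rickard complex whose convolution is known, rather than some degenerate truncation. The subtlety is that ``restricting a convolution'' need not commute with ``convolution of restrictions'' in general; one needs that $(\on{id}\times\iota_+)^*$ is exact enough (it is a flat open restriction of kernels) that it sends the Postnikov system of $\cT^{(k)}$ to a Postnikov system of the restricted complex, and then uniqueness of convolutions (Theorem~\ref{thm:CKL}, or the relevant $\Ext$-vanishing underlying it) forces the identification. I would handle this by noting $\iota_+$ is an open immersion, so $(\on{id}\times\iota_+)^*$ is exact and commutes with taking cones, hence carries the chosen Postnikov system termwise to one for the restricted complex; then the already-established uniqueness of the convolution of the (restricted) Rickard complex identifies $(\on{id}\times\iota_+)^*\pi_*\cT^{(k)}$ with the kernel of the identity functor, i.e.\ $\cO_\Delta$.
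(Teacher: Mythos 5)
Your opening sentence states the right strategy (the restriction collapses the convolution onto its right end term), but the proposal never supplies the reason for the collapse, and the argument you substitute in its place does not work. The actual mechanism is a support argument: a point of $\pi(p(\dI_{k-i}))$ has second component $(a\beta_{k},b,\epsilon_{k}^{\prime})$ with $\beta_{k}=\alpha_{k-i}\circ\beta_{k-i}$ factoring through $V_{k-i}$, hence of rank $\le k-i<k$ for $i>0$; so $a\beta_{k}$ is not injective and the supports of all terms $\pi_{*}p_{*}\cO_{\dI_{k-i}}$, $i>0$, miss $(X_{k}^{+}\times\gl_{k})\times(X_{k}^{+}\times\gl_{k})$ entirely. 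Their restrictions along $\on{id}\times\iota_{+}$ are therefore zero, and the convolution restricts to the restriction of the single surviving term $\pi_{*}\cO_{\dI_{k}}$. That term is then identified with $\cO_{\Delta}$ by an explicit orbit computation: when $\beta_{k}$ is invertible the intertwining relation forces $\epsilon_{k}^{\prime}=\beta_{k}^{-1}\epsilon_{k}\beta_{k}$, so $(a\beta_{k},b,\epsilon_{k}^{\prime})$ lies in the same $G_{k}$-orbit as $(a,\beta_{k}b,\epsilon_{k})$. Neither of these two computations appears in your proposal; you even flag the ``degenerate truncation'' as the outcome to be avoided, when in fact it is exactly what happens and exactly why the lemma holds.

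The replacement argument in your steps (3)--(4) would fail on its own terms. Restricting along $\on{id}\times\iota_{+}$ does not ``bring us back to the correspondence picture of \S4.4'' --- that picture is the $\on{id}\times\iota_{-}$ restriction (surjectivity of $b$), whereas $\iota_{+}$ imposes injectivity of $a\beta_{k}$, a different condition --- and the restricted complex is not ``the honest Rickard complex for the trivial flop $\TGr{k}{\bC^{N}}\to\TGr{k}{\bC^{N}}$.'' Theorem~\ref{thm:CKL} concerns the flop to $\TGr{N-k}{\bC^{N}}$ and provides no such ``trivial flop'' statement; the reflection functor is not the identity even on a middle weight space. The assertion that ``the functor side already tells us the answer'' because ``$\bT^{\prime}$ becomes the identity functor on the diagonal'' is moreover circular: the lemma is precisely condition (i) characterizing the kernel of the window equivalence, which is one of the two inputs needed to relate $\bT^{\prime}$ to $\bW_{\delta}$ in the first place, so it cannot be deduced from that relation. (Your observation that $(\on{id}\times\iota_{+})^{*}$ is exact, being restriction to an open substack, and hence commutes with the Postnikov system, is correct and is implicitly used --- but it is the easy part.)
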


\begin{proof}
    First note that under $\pi$, the image $(a\beta_{k},b)\in X_{k}^{+}$ iff $\beta_{k}$ is of full rank.
    As none of the images $p(\dI_{k-i})$ for $i>0$ contains any full rank $\beta_{k}$, it is reduced to consider $ (\on{id}\times \iota_{+})^{*} \pi_{*} \cO_{\dI_{k}}$. 
    When $\beta_{k}$ is invertible, the intertwining condition $\beta_{k}\epsilon_{k}^{\prime} = \epsilon_{k} \beta_{k}$ of $\dI_{k}$ implies $\epsilon_{k}^{\prime} = \beta_{k}^{-1}\epsilon_{k} \beta_{k}$. 
    So, for any such point in $\dI_{k-i}$, its image $(a \beta_{k}, b, \epsilon_{k}^{\prime})\in X_{k}\oplus \gl_{k}$ under $\pi$ is in the same $G_{k}$-orbit as $(a,\beta_{k} b, \epsilon_{k}) \in X_{k}^{+}\times \gl_{k}$. 
    This means, equivariantly, the base change of $\cO_{\dI_{k-i}}$ along $\on{id}\times \iota_{+}$ is isomorphic to $\cO_{\Delta}$. 
\end{proof}

It remains to verify that the essential image of the integral functor $\Phi_{\pi_{*} \cT^{(k)}}$ lies in the window category $\cW_{[0,N)}$.

\begin{proof}[Proof of Theorem~\ref{thm:main}]
    By Corollary~\ref{cor:resoln}, the sheaf $\cT^{(k)}$ has a locally free resolution $\cF^{\bullet}$ over $\dR_{k}$, whose terms are direct sums of Schur functors $\bS^{\mu} V_{k}^{\vee} \otimes \bS^{\la} V_{k}^{\prime}$ where each $\la = (\la_{i})_{i=1}^{k}$ satisfies $\la_{i} \in [0,k)$. 
    Let 
    \[X_{k}^{+}\times \gl_{k} \xlongleftarrow{p_{1}} (X_{k}^{+}\times \gl_{k}) \times (X_{k}\oplus \gl_{k}) \xlongrightarrow{p_{2}} X_{k}\oplus \gl_{k}
    \]
    be the projections. 
    For any matrix factorization $F = \{F_{0} \rightleftarrows F_{1}\}$ on $X_{k}^{+}\times \gl_{k}$, we need to show 
    \[
    p_{2,*} \left( p^{*}_{1} F \otimes \pi_{*} \cF^{\bullet} \right) \in \cW_{[0,N)}.
    \]
    The following argument partly follows \cite[\S 5.1]{BDF-2}. 
    Suppose $q: [X_{k}^{+}\times \gl_{k}/G_{k}] \to \Gr(k,\bC^{N})$ is the projection map. 
    Because $q$ is affine, objects of the form $q^{*}E$ for $E\in \Db(\Gr(k,\bC^{N}))$ generate $\Db(X_{k}^{+}\times \gl_{k})$, hence they also generate the components $F_{0}, F_{1}$. 
    So, it is enough to show
    \[
    (\Phi_{\pi_{*}\cF^{\bullet}}\circ q^{*}) (E) \in \cW_{[0,N)}.
    \]
    This composition functor has kernel $((q \times \on{id})\circ \pi)_{*} \cF^{\bullet}$, where 
    \begin{align*}
        (q\times \on{id})\circ \pi: [\dR_{k}/G_{k}\times G_{k}] &\longrightarrow \Gr(V_{k},\bC^{N}) \times [X_{k}\oplus \gl_{k}/G_{k}], \\
        (a,b,\beta_{k},\epsilon_{k},\epsilon_{k}^{\prime}) & \longmapsto (a,(a\beta_{k}, b,\epsilon_{k}^{\prime})). 
    \end{align*}
    By applying Theorem~\ref{thm:torsion} with $T=\Hom(V_{k}^{\prime},\bC^{N})$, $U=\Hom(V_{k}^{\prime},V_{k})$ and $G/P = \Gr(V_{k},\bC^{N})$ and using the projection formula, we can resolve each sheaf
    \[
    (p_{2}\circ (q \times \on{id}))_{*} \left( \pi_{*} (\bS^{\mu}V_{k}^{\vee} \otimes \bS^{\la}V_{k}^{\prime}) \otimes (q \circ p_{1})^{*} E \right)
    \]
    by a Lascoux resolution whose terms at degree $-n$ are  
    \begin{equation*}
        \bigoplus_{r \geq n} \bigoplus_{\nu} H^{r-n} \left( \Gr(V_{k},\bC^{N}), \bS^{\mu}V_{k}^{\vee}  \otimes
       \bS^{\nu^{\prime}} (\bC^{N}/V_{k})^{\vee}  \otimes E \right) \otimes \bS^{\la} V_{k}^{\prime} \otimes \bS^{\nu}V_{k}^{\prime}.
    \end{equation*}
    Here the second sum is over all Young diagrams $\nu= (\nu_{i})_{i=1}^{k}$ such that $|\nu| =r$ and $\nu_{i}\in [0,N-k]$.
    By the Littlewood--Richardson rule, the tensor product $\bS^{\la}V_{k}^{\prime} \otimes \bS^{\nu} V_{k}^{\prime}$ further decomposes into $\bigoplus_{\xi} c_{\la,\nu}^{\xi} \bS^{\xi} V_{k}^{\prime}$, where each $\xi$ satisfies $\xi_{i}\in [0,N)$, the half-open window. 
\end{proof}

\subsection{Grade restriction rules}\label{sec:2}
For $0\leq i\leq k$,  consider the locally closed subvariety 
\begin{align*}
     \ddR_{k-i} & =  \left\{
    (\tau, \alpha_{k-i},\beta_{k-i},\epsilon_{k-i},\epsilon_{k},\epsilon_{k}^{\prime}) \mid \alpha_{k-i} \text{ of full rank and } \epsilon_{k}|_{V_{k-i}} = \epsilon_{k-i} 
    \right\} \\
    \subset & \Hom(V_{k},V_{k}^{\prime}) \oplus \Hom(V_{k-i},V_{k})  
    \oplus  \Hom(V_{k}^{\prime},V_{k-i}) \oplus \gl(V_{k-i})\oplus \gl(V_{k}) \oplus \gl(V_{k}^{\prime}),
\end{align*}
cf.\ (\ref{eq:R}), and the map 
\begin{equation}\label{eq:qsm}
    \varphi: \dR_{k-i} \to \ddR_{k-i}, \quad 
    (a,b,\alpha_{k-i},\beta_{k-i},\epsilon_{k-i},\epsilon_{k},\epsilon_{k}^{\prime})  \mapsto (ba, \alpha_{k-i},\beta_{k-i},\epsilon_{k-i},\epsilon_{k},\epsilon_{k}^{\prime}). 
\end{equation}
Let $\ddI_{k-i}\subset \ddR_{k-i}$ be the intersection cut out by $\beta_{k-i}\epsilon_{k}^{\prime} =  \epsilon_{k-i}\beta_{k-i}$. 
Then, it is obvious that the previously defined $\cO_{\dI_{k-i}}$ is the pullback of $\cO_{\ddI_{k-i}}$ along $\varphi$. 
Moreover, the projection $p=p_{k-i}$ is constant over the directions $a:V_{k}\hookrightarrow \bC^{N}$ and $b: \bC^{N} \to V_{k}^{\prime}$, so it is also well-defined between the double-dotted spaces. 
As the morphism (\ref{eq:qsm}) is flat,  we have the base change
\[
\varphi^{*}p_{*}\cO_{\ddI_{k-i}} \cong p_{*} \varphi^{*} \cO_{\ddI_{k-i}} = p_{*} \cO_{\dI_{k-i}}.
\]
Throughout the definition of the potential $\upsilon_{k-i}$ and of the differential $d_{i}$, the maps $a$, $b$ always appear together as the composition $ba$. 
This means the maps $\upsilon_{k-i}$ and $d_{i}$ are also well-defined over the double-dotted spaces, and they are compatible with $\varphi$. 
More precisely, we have the LG model
\[
\upsilon_{k-i}: \ddR_{k-i} \to \bC, \quad 
(\tau,\alpha_{k-i},\beta_{k-i},\epsilon_{k-i},\epsilon_{k},\epsilon_{k}^{\prime})  \longmapsto \on{Tr} \left( \tau\alpha_{k-i}(\beta_{k-i}\epsilon_{k}^{\prime}- \epsilon_{k-i}\beta_{k-i}) \right),
\]
and the intersection $\ddJ_{k-i} \subset \ddR_{k-i}$ cut out by $\tau \alpha_{k-i}=0$. 
By the same argument as in  Lemma~\ref{lem:diff} and Remark~\ref{rmk:d!}, we can find a uniquely determined equivariant map
\[d_{i}:  p_{*} \cO_{\ddI_{k-i}} \langle -i^{2}-i \rangle \longrightarrow p_{*} \cO_{\ddI_{k-i+1}} \langle -(i-1)^{2} -(i-1) \rangle,\]
whose pullback along $\varphi$ coincides with the previously defined differential map. 
From now on, we will work over the double-dotted spaces, and by abuse of notation, we will retain the symbols $\upsilon_{k-i}$, $p=p_{k-i}$, and $d_{i}$ for these maps. 

\begin{rmk}
	The vector space $\ddR_{k}$ is a representation of either $G_{k}$, and we refer to the LG model $(\ddR_{k}, \upsilon_{k})$ as the \textit{quasi-symmetric model}.
\end{rmk}

Fix a basis $\{e_{1},\cdots,e_{k}\}$ of $V_{k}^{\prime}$. 
For each $1 \leq i \leq k$, consider the one-parameter subgroup
\begin{equation*}
    \gamma_{i}: \bC^{\times} \longrightarrow \GL(V_{k}^{\prime}), \quad t\longmapsto \on{diag}(1,\cdots,1,t,\cdots,t),
\end{equation*}
which has $k-i$ ones on the diagonal. 
These one-parameter subgroups parametrize a Kempf--Ness stratification $\{S^{(i)}\}$ of the unstable locus of $\ddR_{k}$ with respect to the stability condition $\theta_{+}$ on the $\GL(V_{k}^{\prime})$-action in the following way (see \cite[Lemma 6.1.9]{Toda}).
\begin{enumerate}[wide, labelindent=0pt, label = (\alph*)]
    \item The stratum $S^{(i)}$ consists of points $(\tau,\beta_{k},\epsilon_{k},\epsilon_{k}^{\prime}) \in \ddR_{k}$ where the image of $\beta_{k}^{\vee}$ generates a $(k-i)$-dimensional $\bC[(\epsilon_{k}^{\prime})^{\vee}]$-submodule $\langle \beta_{k}^{\vee}(V_{k}^{\vee}) \rangle$ of $(V_{k}^{\prime})^{\vee}$. 
    Note that the closure of $S^{(i)}$ contains all lower rank strata $\on{cl}(S^{(i)}) = \bigcup_{j\geq i} S^{(j)}$. 
    
    \item Each $S^{(i)}$ is the $G_{k}$-orbit of the attracting locus $Y^{(i)}$, which consists of points $(\tau,\beta_{k},\epsilon_{k},\epsilon_{k}^{\prime})$  where 
    $\langle  \beta_{k}^{\vee}(V_{k}^{\vee}) \rangle = \on{span}_{\bC}\{e_{1}^{\vee},\cdots,e_{k-i}^{\vee}\}$.
    The attracting locus $Y^{(i)}$ itself is equivariant with respect to the parabolic subgroup $P_{i}$ of $G_{k}$ that preserves $\on{span}_{\bC}\{e_{1},\cdots,e_{k-i}\}^{\perp}$.

    \item By definition, the attracting locus $Y^{(i)}$ flows into the fixed locus $Z^{(i)}$ under $\gamma_{i}(t)$ when $t\to 0$.
    A point $(\tau,\beta_{k},\epsilon_{k},\epsilon_{k}^{\prime})$ is in $Z^{(i)}$ iff
    $\langle \beta_{k}^{\vee}(V_{k}^{\vee}) \rangle = \on{span}_{\bC}\{e_{1}^{\vee},\cdots,e_{k-i}^{\vee}\}$, $\tau(V_{k}) \subset \on{span}_{\bC}\{e_{1},\cdots,e_{k-i}\}$, and $\epsilon_{k}^{\prime}\in \on{Lie}(L_{i})$ for the Levi subgroup $L_{i}$ that preserves $\on{span}_{\bC}\{e_{1},\cdots,e_{k-i}\} \oplus \on{span}_{\bC}\{e_{1},\cdots,e_{k-i}\}^{\perp}$.
\end{enumerate}
In summary, we have a diagram 
\begin{equation*}
    \begin{tikzcd} 
        S^{(i)} \arrow[d,shift left,"\pi_{i}"] \arrow[r,hook,"q_{i}"] & (\bigcup_{j>i} S^{(j)})^{\complement}  & (\bigcup_{j\geq i} S^{(j)})^{\complement} \arrow[l, hook',"\iota_{i}"'] \\
        Z^{(i)} \arrow[ur, hook, "\sigma_{i}"] \arrow[u,hook,shift left,"\sigma_{i}"] & &
    \end{tikzcd}
\end{equation*}
where $\pi_{i}$ takes the attracting limit, $q_{i}$, $\sigma_{i}$ are embeddings\footnote{We follow \cite{HL,HLShip} to denote the composition $q_{i} \circ \sigma_{i}$ also by $\sigma_{i}$, which will not cause any confusion in practice.}, and $\iota_{i}$ is an inclusion of open subset.
The diagram is also equivariant with respect to the $\GL(V_{k})$-action, so we will work with this additional $G_{k}$-equivariance in the following. 

Since $Z^{(i)}$ is fixed by $\gamma_{i}$, there is a decomposition  
\begin{equation*}
    \MF_{ G_{k} \times L_{i}\times  \bC^{\times}}(Z^{(i)}, \upsilon_{k}) = \bigoplus_{\kappa\in\mathbb{Z}} \MF_{ G_{k} \times L_{i}\times \bC^{\times}}(Z^{(i)}, \upsilon_{k})_{\kappa}
\end{equation*}
into full subcategories of matrix factorizations of various $\gamma_{i}$-weights $\kappa$. 
Following \cite{HL}, we denote by $(\cdot)_{\kappa}$ the exact functor that takes the $\gamma_{i}$-weight $\kappa$ summand from this decomposition.  
Let $\cA_{\kappa}$ be the essential image of the fully faithful functor $q_{i,*} \circ \pi_{i}^{*}$ (see, e.g., \cite[Theorem 6.1.2]{Toda})
\begin{equation*}
    \MF_{G_{k} \times  L_{i}\times \bC^{\times}}(Z^{(i)}, \upsilon_{k} )_{\kappa} \xlongrightarrow{\pi_{i}^{*}} \MF_{G_{k}^{2}\times \bC^{\times}}(S^{(i)},\upsilon_{k}) \xlongrightarrow{q_{i,*}} \MF_{G_{k}^{2} \times \bC^{\times}}((\bigcup_{j>i} S^{(j)})^{\complement}, \upsilon_{k}). 
\end{equation*}
Let $\eta_{i}$ be the sum of the $\gamma_{i}$-weights of the conormal bundle $N_{S^{(i)}/\ddR_{k}}^{\vee}$, which is given by the number
\[
\eta_{i} = \on{wt}_{\gamma_{i}} \det N^{\vee}_{Y^{(i)}/\ddR_{k}} - \on{wt}_{\gamma_{i}} \det \gl(V_{k}^{\prime})^{\gamma_{i}>0} = ik
\]
from \cite[Equation (4)]{HL}. 
Now, the inverse of the equivalence 
\begin{equation}\label{eq:ZA}
    q_{i,*} \pi_{i}^{*}:  \MF_{G_{k} \times  L_{i}\times \bC^{\times}}(Z^{(i)}, \upsilon_{k} )_{\kappa_{i}} \simrightarrow \cA_{\kappa_{i}}
\end{equation}
is given by $(\sigma_{i}^{*}(\cdot))_{\kappa_{i}}$ or $(\sigma_{i}^{*}(\cdot))_{\kappa_{i}+\eta_{i}} \otimes \omega_{q_{i}}$; see \cite[Lemma 2.2]{HLShip}. 

The following categorical Kirwan surjectivity theorem is due to \cite{HL,BFK}. 
\begin{thm}[{\cite[Theorem 6.1.2]{Toda}}]\label{thm:SOD}
    For $\kappa_{i}\in \mathbb{Z}$, there is a semi-orthogonal decomposition 
    \begin{equation*}
        \MF_{G_{k}^{2} \times \bC^{\times}} ((\bigcup_{j>i} S^{(j)})^{\complement}, \upsilon_{k}) = \langle \cA_{<\kappa_{i}}, \cG_{\kappa_{i}}, \cA_{\geq \kappa_{i}} \rangle,
    \end{equation*} 
    where $\cG_{\kappa_{i}}$ is the full subcategory of matrix factorizations $F$ such that 
    \begin{equation*}
        \sigma_{i}^{*}F \in \MF_{G_{k} \times L_{i}\times  \bC^{\times}}(Z^{(i)}, \upsilon_{k} )_{[\kappa_{i},\kappa_{i}+\eta_{i})}. 
    \end{equation*}
    Moreover, the restriction $\iota_{i}^{*}: \cG_{\kappa_{i}} \to \MF_{G_{k}^{2} \times \bC^{\times}}((\bigcup_{j\geq i} S^{(j)})^{\complement}, \upsilon_{k})$ is an equivalence. 
\end{thm}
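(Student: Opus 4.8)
This is a one-stratum instance of the window / categorical Kirwan surjectivity package of Halpern--Leistner \cite{HL} and Ballard--Favero--Katzarkov \cite{BFK}, transported to matrix factorizations as in \cite[\S 6.1]{Toda}; the plan is to recall that argument in the present notation. The first move is to localise near the Kempf--Ness stratum $S^{(i)}$, which is smooth and closed inside $(\bigcup_{j>i} S^{(j)})^{\complement}$. The destabilising cocharacter $\gamma_i$ exhibits $S^{(i)}$ as the $G_k$-sweep of the attracting locus $Y^{(i)}$, which is $P_i$-equivariant and is an affine bundle over the fixed locus $Z^{(i)}$ via $\pi_i$; induction along $G_k/P_i$ then identifies the subcategory $\cA_{\kappa}$ --- the essential image of $q_{i,*}\pi_i^*$ --- with $\MF_{G_k\times L_i\times\bC^\times}(Z^{(i)},\upsilon_k)_{\kappa}$. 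One checks along the way that $\upsilon_k$ restricts to a $\gamma_i$-invariant potential on $Z^{(i)}$ (automatic, as $Z^{(i)}$ is $\gamma_i$-fixed) and to $P_i$-invariant potentials on $Y^{(i)}$ and $S^{(i)}$, so that every functor in sight is a functor of LG models.

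\textbf{The key local computation.} The core is a ``quantization commutes with reduction'' identity: for $F$ pulled up from $Z^{(i)}$ one has $\sigma_i^*\, q_{i,*}\,\pi_i^*\,F \cong F\otimes\Sym\bigl((N^\vee_{S^{(i)}/\ddR_k})|_{Z^{(i)}}\bigr)$, and all $\gamma_i$-weights on this conormal have a single fixed sign; taking the $\gamma_i$-weight-$\kappa$ summand recovers $F$, which gives full faithfulness of $q_{i,*}\pi_i^*$ together with the inverse $(\sigma_i^*(-))_{\kappa}$ on $\cA_{\kappa}$, and also the restatement via $\omega_{q_i}$. By \cite[Eq.\ (4)]{HL} the number $\eta_i=\on{wt}_{\gamma_i}\det N^\vee_{Y^{(i)}/\ddR_k}-\on{wt}_{\gamma_i}\det\gl(V_k')^{\gamma_i>0}=ik$ is precisely the total $\gamma_i$-weight of the stratum conormal after deleting the redundant group directions, and this is what fixes the width of the grade-restriction window. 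The Hom-vanishings between the three pieces then reduce, by the adjunctions $q_{i,*}\dashv q_i^!$ and $\pi_i^*\dashv\pi_{i,*}$, to a weight count over $Z^{(i)}$: a morphism between an induced object and a general $G$ becomes a morphism over $Z^{(i)}$ twisted by $\Sym$ of the conormal, whose $\gamma_i$-weights miss $[\kappa_i,\kappa_i+\eta_i)$ exactly when $\sigma_i^*G$ is concentrated in that window; hence objects of $\cG_{\kappa_i}$ are orthogonal to the $\cA$'s in the forbidden directions.

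\textbf{Generation and the equivalence.} To see $\langle\cA_{<\kappa_i},\cG_{\kappa_i},\cA_{\ge\kappa_i}\rangle$ generates the whole category, take any $F$ on $(\bigcup_{j>i}S^{(j)})^{\complement}$, split $\sigma_i^*F$ into its $\gamma_i$-weight pieces, and for each piece of weight outside $[\kappa_i,\kappa_i+\eta_i)$ form a cone with the induced object $q_{i,*}\pi_i^*$ of that piece, on the side ($\cA_{<\kappa_i}$ or $\cA_{\ge\kappa_i}$) dictated by its weight; since the weights are bounded this terminates, and the result lies in $\cG_{\kappa_i}$ up to the extensions just introduced --- this is the usual Kirwan-surjectivity/window-mutation argument. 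Finally, $\iota_i^*$ annihilates exactly the matrix factorizations set-theoretically supported on $S^{(i)}$, which is the thick subcategory generated by all $\cA_{\kappa}$; the two outer terms of the decomposition generate that subcategory, so $\iota_i^*$ exhibits $\cG_{\kappa_i}$ as the Verdier quotient by it, hence restricts to an equivalence onto $\MF_{G_k^2\times\bC^\times}((\bigcup_{j\ge i}S^{(j)})^{\complement},\upsilon_k)$.

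\textbf{Main obstacle.} The delicate parts are (a) transporting the attracting--repelling local structure and the weight bookkeeping faithfully into the curved ($\mathbb{Z}/2$-graded, $w$-twisted) setting --- one needs the support, base-change and adjunction statements for matrix factorizations rather than for honest complexes --- and (b) nailing down the exact width $\eta_i=ik$, which requires comparing the $\gamma_i$-weights of $N^\vee_{Y^{(i)}/\ddR_k}$ with those of $\gl(V_k')^{\gamma_i>0}$ on the nose. Both are carried out in \cite[\S 6.1]{Toda}; in the write-up I would cite that reference for the curved enhancement and reproduce the computation $\eta_i=ik$ from the explicit description of $\ddR_k$ together with the subgroups $\gamma_i$, $P_i$, $L_i$.
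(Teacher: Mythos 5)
The paper does not prove this theorem: it is imported verbatim as \cite[Theorem 6.1.2]{Toda} (with the underlying result credited to \cite{HL,BFK}), so there is no in-paper argument to compare against. Your sketch is a faithful and correct account of the standard proof in those references --- the induction $q_{i,*}\pi_i^*$ from the fixed locus, the conormal-weight computation giving the window width $\eta_i=ik$, the weight-truncation/mutation argument for generation, and the identification of $\iota_i^*$ with the Verdier quotient by the subcategory supported on $S^{(i)}$ --- so for the purposes of this paper you would simply cite it as the author does.
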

As a result, for a choice of integers $\kappa_{\bullet} = (\kappa_{1},\cdots,\kappa_{k})\in \mathbb{Z}^{k}$, we obtain a nested semi-orthogonal decomposition 
\[
\MF_{G_{k}^{2} \times \bC^{\times}}(\ddR_{k},\upsilon_{k}) = \langle\cA_{< \kappa_{k}},\cdots, \cA_{< \kappa_{1}}, \cG_{\kappa_{\bullet}}, \cA_{\geq \kappa_{1}}, \cdots, \cA_{\geq \kappa_{k}} \rangle, 
\]
where $\cG_{\kappa_{\bullet}}$ consists of matrix factorizations $F$ that satisfy the \textit{grade restriction rule} 
\[
\on{wt}_{\gamma_{i}} \sigma_{i}^{*}F \in [\kappa_{i},\kappa_{i}+\eta_{i}), \quad i=1,\cdots,k. 
\]

\subsection{Rickard complex as mutations}\label{sec:3}

Choose $\kappa_{\bullet} = (0,\cdots,0)\in \mathbb{Z}^{k}$ from now on.
Consider the full subcategory 
\[
\cC_{\kappa_{i}} \subset \MF_{G_{k}^{2} \times \bC^{\times}}((\bigcup_{j>i} S^{(j)})^{\complement}, \upsilon_{k})
\]
of matrix factorizations $F$ such that 
\begin{equation*}
    \sigma_{i}^{*}F \in \MF_{G_{k} \times L_{i}\times  \bC^{\times}}(Z^{(i)}, \upsilon_{k} )_{[\kappa_{i},\kappa_{i}+\eta_{i}]}. 
\end{equation*}
By Theorem~\ref{thm:SOD}, it has a semi-orthogonal decomposition
\begin{equation*}
    \cC_{\kappa_{i}} = \langle \cG_{\kappa_{i}}, \cA_{\kappa_{i}} \rangle. 
\end{equation*}
This means, for any matrix factorization $F\in \cC_{\kappa_{i}}$, there exits a unique morphism $E \to F$, with $E \in \cA_{\kappa_{i}}$, whose cone is contained in $\cG_{\kappa_{i}}$. 
From \cite{HL} (or \cite[Lemma 2.3]{HLShip}), we know this is realized by the adjunction
\begin{equation}\label{eq:mut}
     q_{i,*} \pi_{i}^{*} \left((\sigma_{i}^{*} F)_{\kappa_{i}+\eta_{i}} \otimes \omega_{q_{i}} \right) \xlongrightarrow{\varepsilon} F  \longrightarrow \on{Cone}(\varepsilon),
\end{equation}
where the left end lies in $ \cA_{\kappa_{i}}$ and the right end lies in $\cG_{\kappa_{i}}$. 

\begin{lem}\label{lem:resolni}
    For $i\geq 0$, the sheaf $p_{*}\cO_{\ddI_{k-i}}$ has a locally free resolution $\cF_{i}^{\bullet}$ over $\ddR_{k}$, whose terms are direct sums of Schur functors $\bS^{\mu}V_{k}^{\vee}\otimes \bS^{\la}V_{k}^{\prime}$ where the highest weight $\la$ satisfies $k\geq \la_{1} \geq \cdots \geq \la_{k} \geq 0$. 
\end{lem}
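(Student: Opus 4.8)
The plan is to build $\cF_{i}^{\bullet}$ as the relative Lascoux resolution of $p_{*}\cO_{\ddI_{k-i}}$ furnished by Theorem~\ref{thm:torsion}, applied to the Koszul resolution of $\cO_{\ddI_{k-i}}$, and then to keep track of the $\GL(V_{k}^{\prime})$-weights that appear in its terms; the argument is a double-dotted analogue of computations in \cite{Cautis}. First I would linearise the ambient stack: using $\alpha_{k-i}$ to regard $V_{k-i}$ as a $(k-i)$-dimensional subspace $S\subset V_{k}$, the quotient $[\ddR_{k-i}/G_{k-i}]$ is the total space of the vector bundle $\Hom(V_{k},V_{k}^{\prime})\oplus\Hom(V_{k}^{\prime},S)\oplus\mathfrak{p}_{S}\oplus\gl(V_{k}^{\prime})$ over $\Gr(k-i,V_{k})$, with $\mathfrak{p}_{S}=\{\epsilon\in\gl(V_{k}):\epsilon(S)\subseteq S\}$. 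Equivalently $\ddR_{k-i}=U\times_{P_{k-i}}G_{k}$ for the $P_{k-i}$-stable linear subspace $U\subset\ddR_{k}$ cut out by $\on{im}(\beta_{k})\subseteq S^{0}$ and $\epsilon_{k}(S^{0})\subseteq S^{0}$, where $P_{k-i}\subset\GL(V_{k})$ stabilises a fixed $S^{0}$, and $p=p_{k-i}$ is the collapsing map $(u,g)\mapsto gu$ of Theorem~\ref{thm:torsion} with ambient $T=\ddR_{k}$ and $G/P=\Gr(k-i,V_{k})$.

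Next I would Koszul-resolve $\cO_{\ddI_{k-i}}$: the subscheme $\ddI_{k-i}\subset\ddR_{k-i}$ is the regular vanishing locus of the section $\beta_{k-i}\epsilon_{k}^{\prime}-(\epsilon_{k}|_{S})\beta_{k-i}$ of the bundle $\Hom(V_{k}^{\prime},V_{k-i})\cong(V_{k}^{\prime})^{\vee}\otimes S$, which is pulled back from $\Gr(k-i,V_{k})$, so $\cO_{\ddI_{k-i}}$ is resolved over $\ddR_{k-i}$ by the Koszul complex $\cK^{\bullet}$ with
\[
\cK^{-j}=\extp^{j}\!\bigl(V_{k}^{\prime}\otimes S^{\vee}\bigr)\otimes\cO_{\ddR_{k-i}}=\bigoplus_{\nu}\bigl(\bS^{\nu}V_{k}^{\prime}\otimes\bS^{\nu'}S^{\vee}\bigr)\otimes\cO_{\ddR_{k-i}},
\]
the Cauchy sum running over partitions $\nu$ in the $k\times(k-i)$ box, so $\nu_{1}\le k-i$; each $\cK^{-j}$ is pulled back from $\Gr(k-i,V_{k})$. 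By flat base change along $\varphi$ and Proposition~\ref{prop:Dc} the sheaf $p_{*}\cO_{\ddI_{k-i}}=Rp_{*}\cK^{\bullet}$ is concentrated in a single cohomological degree; running Theorem~\ref{thm:torsion} for the complex $\cK^{\bullet}$ over $\ddR_{k-i}=U\times_{P_{k-i}}G_{k}$ (a routine extension of the single-bundle statement; cf.\ \cite[\S 6.1]{Wey}) then produces a $(G_{k}\times G_{k})$-equivariant locally free resolution $\cF_{i}^{\bullet}$ of $p_{*}\cO_{\ddI_{k-i}}$ over $\ddR_{k}$ with terms direct sums of
\[
H^{m}\!\bigl(\Gr(k-i,V_{k}),\ \bS^{\nu}V_{k}^{\prime}\otimes\bS^{\nu'}S^{\vee}\otimes\extp^{r}(\ddR_{k}/U)^{\vee}\bigr)\otimes\cO_{\ddR_{k}}.
\]

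A direct computation gives $(\ddR_{k}/U)^{\vee}\cong(V_{k}^{\prime}\otimes Q^{\vee})\oplus(S\otimes Q^{\vee})$ with $Q=V_{k}/S$ of rank $i$, so by the Cauchy formula each summand of $\extp^{r}(\ddR_{k}/U)^{\vee}$ is of the form $\bS^{\rho}V_{k}^{\prime}\otimes\bS^{\rho'}Q^{\vee}\otimes\bS^{\gamma}S\otimes\bS^{\gamma'}Q^{\vee}$ with $\rho$ in the $k\times i$ box (so $\rho_{1}\le i$) and $\gamma$ in the $(k-i)\times i$ box. Borel--Weil--Bott on $\Gr(k-i,V_{k})$ computes the cohomology of the $S$- and $Q^{\vee}$-factors as a sum of Schur functors $\bS^{\mu}V_{k}^{\vee}$, with no constraint on $\mu$ (which the statement allows), while the $\GL(V_{k}^{\prime})$-factors $\bS^{\nu}V_{k}^{\prime}$ and $\bS^{\rho}V_{k}^{\prime}$ are inert on the Grassmannian and pass through unchanged. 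By the Littlewood--Richardson rule every $\bS^{\la}V_{k}^{\prime}$ occurring in $\bS^{\nu}V_{k}^{\prime}\otimes\bS^{\rho}V_{k}^{\prime}$ satisfies $\la_{1}\le\nu_{1}+\rho_{1}\le(k-i)+i=k$ and has at most $k$ parts, hence $k\ge\la_{1}\ge\cdots\ge\la_{k}\ge 0$, which is the assertion. For $i=0$ this is immediate, as $p_{0}=\on{id}$ and the Koszul resolution of $\ddI_{k}=\{\beta_{k}\epsilon_{k}^{\prime}-\epsilon_{k}\beta_{k}=0\}$ in $\ddR_{k}$ already has terms $\extp^{r}(V_{k}^{\prime}\otimes V_{k}^{\vee})=\bigoplus_{\nu\subseteq k\times k}\bS^{\nu}V_{k}^{\prime}\otimes\bS^{\nu'}V_{k}^{\vee}$ of the required shape.

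The genuinely delicate point is the numerical coincidence making the bound sharp at $k$: the inequality $\nu_{1}\le k-i$ reflects the rank $k-i$ of the factor $S^{\vee}$ in the Koszul bundle $V_{k}^{\prime}\otimes S^{\vee}$, the inequality $\rho_{1}\le i$ reflects the rank $i$ of the factor $Q^{\vee}$ in the $\beta_{k}$-direction $V_{k}^{\prime}\otimes Q^{\vee}$ of $(\ddR_{k}/U)^{\vee}$, and their sum must be exactly $k$ — this is the one place where the precise shape of the Hecke-type model $\ddR_{k-i}$ enters, everything else being routine Cauchy, Borel--Weil--Bott and Littlewood--Richardson. A secondary point to handle is that the output of Theorem~\ref{thm:torsion} applied to the complex $\cK^{\bullet}$ is an \emph{honest} resolution, which is exactly why the single-degree vanishing established in Proposition~\ref{prop:Dc} is invoked.
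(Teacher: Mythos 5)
Your proposal is correct and follows essentially the same route as the paper's proof: Koszul-resolve $\cO_{\ddI_{k-i}}$ by $\extp^{\bullet}(V_{k}^{\prime}\otimes V_{k-i}^{\vee})$, apply Theorem~\ref{thm:torsion} relatively with $T/U \cong V_{k}^{\prime\vee}\otimes(V_{k}/V_{k-i})\oplus\Hom(V_{k-i},V_{k}/V_{k-i})$ over $\Gr(V_{k-i},V_{k})$, and combine Cauchy, Borel--Weil--Bott and Littlewood--Richardson to get the bound $\la_{1}\le(k-i)+i=k$. The only cosmetic difference is that you explicitly justify the single-degree hypothesis via flat descent along $\varphi$ and Proposition~\ref{prop:Dc}, which the paper leaves implicit.
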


\begin{proof}
    The local complete intersection $\ddI_{k-i}$ is cut out in $\ddR_{k-i}$ by the intertwining condition $\beta_{k-i}\epsilon_{k}^{\prime} = \epsilon_{k-i}\beta_{k-i}$.
    So, we can resolve $\cO_{\ddI_{k-i}}$ by a Koszul complex $\cK^{\bullet} = \extp^{\bullet} (V_{k}^{\prime}\otimes V_{k-i}^{\vee})$ over $\ddR_{k-i}$. 
    The quotient $[\ddR_{k-i}/ G_{k-i} \times G_{k}^{2}]$ is the subspace of 
    \[
    [\Hom(V_{k},V_{k}^{\prime}) \times \Hom(V_{k}^{\prime},V_{k-i}) \times \gl(V_{k}) \times \gl(V_{k}^{\prime}) /G_{k}\times G_{k}]
    \]
    over $\Gr(V_{k-i},V_{k})$
    where $\epsilon_{k}\in \gl(V_{k})$ preserves $V_{k-i}\hookrightarrow V_{k}$. 
    By applying Theorem~\ref{thm:torsion} with
    \[
    T= \Hom(V_{k}^{\prime},V_{k})\oplus \gl(V_{k}), \quad U = \Hom(V_{k}^{\prime},V_{k-i}) \oplus \on{Lie}(P), \quad G/P = \Gr(V_{k-i},V_{k}), 
    \]
    we obtain a Lascoux resolution of each shaef $p_{*}(\cK^{\bullet})$ with terms at degree $-n$ given by
    \[
    \bigoplus_{r\geq n} H^{r-n} \left( \Gr(V_{k-i},V_{k}),  \cK^{\bullet} \otimes \extp^{r} \left( 
    V_{k}^{\prime} \otimes (V_{k}/V_{k-i})^{\vee} \oplus
    V_{k-i}\otimes (V_{k}/V_{k-i})^{\vee}   \right) \right).
    \]
    After decomposing the exterior power and using the projection formula, the total complex has general terms of the form
    \[
    H^{\bullet}_{\Gr(k-i,k)} \left(\bS^{\alpha^{\prime}} V_{k-i}^{\vee} \otimes \bS^{\beta}V_{k-i} \otimes \bS^{\beta^{\prime}}  (V_{k}/V_{k-i})^{\vee} \otimes \bS^{\gamma^{\prime}}  (V_{k}/V_{k-i})^{\vee}   \right) \otimes \bS^{\alpha}V_{k}^{\prime} \otimes \bS^{\gamma} V_{k}^{\prime}, 
    \]
    where the Young diagrams $\alpha$, $\beta$ and $\gamma$ have at most $k-i$, $i$ and $i$ boxes in each of their rows respectively. 
    When it is nonvanishing, the cohomology group is isomorphic to a direct sum of Schur functors of $V_{k}^{\vee}$ by the Borel--Weil--Bott theorem \cite[\S 4.1]{Wey}. 
    According to the Littlewood--Richardson rule, the tensor product $ \bS^{\alpha}V_{k}^{\prime} \otimes \bS^{\gamma} V_{k}^{\prime}$ has a decomposition $\bigoplus_{\la} c_{\alpha,\gamma}^{\la} \bS^{\la} V_{k}^{\prime}$, where each $\la$ has at most $k$ boxes in each of its rows. 
\end{proof}

As we said in \S\ref{sec:kernel}, each sheaf $p_{*}\cO_{\ddI_{k-i}}$ is supported on the union of components
\[
\ddot{C}_{\leq k-i}  =  \{ \on{rk}(\beta_{k}) \leq k-i \} \subset \ddI_{k}. 
\]
Since $\ddI_{k}$ is defined by the equation $\beta_{k}\epsilon_{k}^{\prime} = \epsilon_{k} \beta_{k}$, the kernel of $\beta_{k}$ is preserved by $\epsilon_{k}^{\prime}$, and hence the image of $\beta_{k}^{\vee}$ is preserved by $(\epsilon_{k}^{\prime})^{\vee}$. 
This implies that the components $\ddot{C}_{\leq k-i}$ are contained in $\on{cl}(S^{(i)}) = \bigcup_{j\geq i} S^{(j)}$. 

\begin{lem}\label{lem:win} 
    For $i\geq l\geq 0$, the restriction of $p_{*}\cO_{\ddI_{k-l}}$ to $(\bigcup_{j>i}S^{(i)})^{\complement}$ lies in $\cC_{\kappa_{i}}$. 
    In particular, when $i=l>0$, the restriction further belongs to $\cA_{\kappa_{i}}$. 
\end{lem}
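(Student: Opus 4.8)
The plan is to treat the two assertions in turn. For the inclusion $p_{*}\cO_{\ddI_{k-l}}|_{(\bigcup_{j>i}S^{(j)})^{\complement}}\in\cC_{\kappa_{i}}$ I would begin from the locally free resolution $\cF_{l}^{\bullet}$ of $p_{*}\cO_{\ddI_{k-l}}$ over $\ddR_{k}$ supplied by Lemma~\ref{lem:resolni}, whose terms are finite direct sums of $\bS^{\mu}V_{k}^{\vee}\otimes\bS^{\la}V_{k}^{\prime}$ with $k\geq\la_{1}\geq\cdots\geq\la_{k}\geq0$. Restriction to the open locus $(\bigcup_{j>i}S^{(j)})^{\complement}$ is exact and $\sigma_{i}$ is a closed immersion of that locus, so $\cF_{l}^{\bullet}|_{Z^{(i)}}$ represents the derived restriction $\sigma_{i}^{*}$ of the restricted sheaf. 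Now $\gamma_{i}\subset\GL(V_{k}^{\prime})$ acts trivially on $V_{k}$, and on $Z^{(i)}$ the bundle $V_{k}^{\prime}$ decomposes into a $\gamma_{i}$-weight-$0$ summand of rank $k-i$ and a weight-$1$ summand of rank $i$. By the branching rule $\bS^{\la}(W_{0}\oplus W_{1})=\bigoplus_{\mu,\nu}(\bS^{\mu}W_{0}\otimes\bS^{\nu}W_{1})^{\oplus c_{\mu\nu}^{\la}}$ with $\mu$ (resp.\ $\nu$) of at most $k-i$ (resp.\ $i$) rows, every $\gamma_{i}$-weight of $\bS^{\la}V_{k}^{\prime}|_{Z^{(i)}}$ is such a $|\nu|$, which lies between $\la_{k-i+1}+\cdots+\la_{k}\geq0$ and $\la_{1}+\cdots+\la_{i}\leq ik$. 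As $\kappa_{i}=0$ and $\eta_{i}=ik$, every term of $\cF_{l}^{\bullet}|_{Z^{(i)}}$, hence its totalization, lies in $\MF_{G_{k}\times L_{i}\times\bC^{\times}}(Z^{(i)},\upsilon_{k})_{[\kappa_{i},\kappa_{i}+\eta_{i}]}$; the residual $\GL(V_{k})$-equivariance, the $\bC^{\times}$-grading and the potential $\upsilon_{k}$ play no role in this weight count. This settles the first claim.

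For the sharper statement when $i=l>0$ I would first pin down the restricted sheaf geometrically. The sheaf $p_{*}\cO_{\ddI_{k-i}}$ is supported on $\ddot{C}_{\leq k-i}=\{\on{rk}(\beta_{k})\leq k-i\}\subset\ddI_{k}$, and on $\ddI_{k}$ the dimension of the $\bC[(\epsilon_{k}^{\prime})^{\vee}]$-module generated by $\on{im}\beta_{k}^{\vee}$ equals $\on{rk}(\beta_{k})$ (because $\ker\beta_{k}$ is $\epsilon_{k}^{\prime}$-invariant there); hence $\ddot{C}_{\leq k-i}\cap(\bigcup_{j\leq i}S^{(j)})=\ddI_{k}\cap S^{(i)}$, with $\on{rk}(\beta_{k})=k-i$ along this locus. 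Over it the projection $p=p_{k-i}$ is an isomorphism, the unique point of a fibre being recorded by $V_{k-i}:=\on{im}\beta_{k}$ (which is $\epsilon_{k}$-invariant and of dimension $k-i$), so $p_{*}\cO_{\ddI_{k-i}}|_{(\bigcup_{j>i}S^{(j)})^{\complement}}\cong q_{i,*}\cO_{\ddI_{k}\cap S^{(i)}}$, the structure sheaf of a closed subscheme of $S^{(i)}$. It then remains to show $\ddI_{k}\cap S^{(i)}$ is $\pi_{i}$-saturated, i.e.\ equals $\pi_{i}^{-1}(\ddI_{k}\cap Z^{(i)})$ scheme-theoretically; granting this, $\cO_{\ddI_{k}\cap S^{(i)}}=\pi_{i}^{*}\cO_{\ddI_{k}\cap Z^{(i)}}$, and as $Z^{(i)}$ is pointwise $\gamma_{i}$-fixed this sheaf has $\gamma_{i}$-weight $0=\kappa_{i}$, so $p_{*}\cO_{\ddI_{k-i}}|_{(\bigcup_{j>i}S^{(j)})^{\complement}}=q_{i,*}\pi_{i}^{*}\cO_{\ddI_{k}\cap Z^{(i)}}$ lies in the essential image $\cA_{\kappa_{i}}$ of $q_{i,*}\pi_{i}^{*}$ on weight-$\kappa_{i}$ objects.

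The key input, and the step I expect to be the main obstacle, is this $\pi_{i}$-saturation. By $G_{k}$-equivariance it suffices to check it on the attracting locus $Y^{(i)}$, where $\beta_{k}$ annihilates $\on{span}\{e_{k-i+1},\dots,e_{k}\}$ and $\epsilon_{k}^{\prime}$ preserves it; writing $\bar{\beta}$ for the restriction of $\beta_{k}$ to $\on{span}\{e_{1},\dots,e_{k-i}\}$ and $A$ for the corresponding block of $\epsilon_{k}^{\prime}$, a short matrix computation will show that $\beta_{k}\epsilon_{k}^{\prime}-\epsilon_{k}\beta_{k}$ restricts to $\bar{\beta}A-\epsilon_{k}\bar{\beta}$ on $\on{span}\{e_{1},\dots,e_{k-i}\}$ and vanishes identically on the complementary subspace. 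Thus the ideal of $\ddI_{k}$ inside $Y^{(i)}$ is generated by the entries of $\bar{\beta}A-\epsilon_{k}\bar{\beta}$, all of which are pulled back from $Z^{(i)}$ along $\pi_{i}$, and $\ddI_{k}\cap Z^{(i)}$ is cut out there by the same equation, which is exactly the saturation statement. The two points that require care in the write-up are (a) that $p$ is genuinely an isomorphism of schemes, not merely a set-theoretic bijection, over the open rank-$(k-i)$ stratum --- argued as for resolutions of determinantal loci, using the section $V_{k-i}=\on{im}\beta_{k}$ --- and (b) keeping the scheme structures matched through the equivariant reduction from $S^{(i)}$ to $Y^{(i)}$ and then to $Z^{(i)}$; by comparison the weight count of the first paragraph is routine.
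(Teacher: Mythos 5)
Your argument for the first assertion is exactly the paper's: restrict the locally free resolution $\cF_{l}^{\bullet}$ from Lemma~\ref{lem:resolni} and bound the $\gamma_{i}$-weights of $\sigma_{i}^{*}(\bS^{\mu}V_{k}^{\vee}\otimes\bS^{\la}V_{k}^{\prime})$; the paper leaves the weight count as ``direct to check,'' and your branching-rule computation (weights $=|\nu|\in[0,ik]=[\kappa_{i},\kappa_{i}+\eta_{i}]$, with $\gamma_{i}$ acting trivially on $V_{k}$) is precisely what that check amounts to. For the second assertion you take a genuinely different, more explicit route. The paper argues abstractly: the restriction is in $\cC_{\kappa_{i}}$ and is supported on the closed stratum $S^{(i)}$, and an object of $\cC_{\kappa_{i}}$ supported on $S^{(i)}$ must lie in $\cA_{\kappa_{i}}$ (an object supported on a Kempf--Ness stratum is built from the $\cA_{\kappa}$'s, and any contribution from $\cA_{\kappa}$ with $\kappa>0$ or $\kappa<0$ would produce a $\sigma_{i}^{*}$-weight outside $[0,\eta_{i}]$). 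You instead exhibit the object directly as $q_{i,*}\pi_{i}^{*}$ of a weight-zero sheaf: you show $p$ is an isomorphism onto $\ddI_{k}\cap S^{(i)}$ (recovering $V_{k-i}=\on{im}\beta_{k}$) and that $\ddI_{k}\cap S^{(i)}$ is $\pi_{i}$-saturated because, on $Y^{(i)}$, the defining equation $\beta_{k}\epsilon_{k}^{\prime}-\epsilon_{k}\beta_{k}$ reduces to $\bar{\beta}A-\epsilon_{k}\bar{\beta}$, which involves only $\gamma_{i}$-weight-zero coordinates. This is correct (your block computation checks out, using that $\epsilon_{k}^{\prime}$ preserves $\on{span}\{e_{k-i+1},\dots,e_{k}\}$ on the stratum), and in fact the identification $p_{*}\cO_{\ddI_{k-i}}|_{U}\cong q_{i,*}(p|_{S^{(i)}})_{*}\cO_{\ddI_{k-i}|_{U}}$ with $p|_{S^{(i)}}$ an isomorphism is exactly what the paper establishes later, inside the proof of Proposition~\ref{prop:main}; you are front-loading that geometric analysis. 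What your version buys is a self-contained, hands-on proof of $\cA_{\kappa_{i}}$-membership that does not invoke the decomposition of the category of objects supported on a stratum; what it costs is length and the two bookkeeping points you flag yourself (schematic isomorphism of $p$ over the rank-$(k-i)$ locus, and the $P_{i}$-equivariant reduction from $S^{(i)}$ to $Y^{(i)}$), neither of which is an obstacle.
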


\begin{proof}
    The terms of the resolution $\cF_{l}^{\bullet}$ of $p_{*}\cO_{\ddI_{k-l}}$ from Lemma~\ref{lem:resolni} are direct sums of $\bS^{\mu}V_{k}^{\vee}\otimes \bS^{\la}V_{k}^{\prime}$, where $k\geq \la_{1}\geq \cdots \geq \la_{k}\geq 0$. 
    It is straightforward to see that each pullback $\sigma_{i}^{*}(\bS^{\mu}V_{k}^{\vee} \otimes \bS^{\la}V_{k}^{\prime})$ has its $\gamma_{i}$-weights located in the range $[0,ik]$. 
    In particular, as $p_{*}\cO_{\ddI_{k-i}}$ is supported on $\on{cl}(S^{(i)})$, its restriction to $(\bigcup_{j>i}S^{(j)})^{\complement}$ is supported on $S^{(i)}$, hence lies in $\cA_{\kappa_{i}}$. 
\end{proof}

\begin{prop}\label{prop:main}
    For $i>0$, the restriction of $\cT^{(i)}$ to $(\bigcup_{j>i}S^{(j)})^{\complement}$ lies in $\cG_{\kappa_{i}}$. 
\end{prop}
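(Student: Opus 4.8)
The plan is to prove, for each fixed $i>0$, that the only potentially non-window piece of $\cT^{(i)}|_{U_i}$, where $U_i:=\bigl(\bigcup_{j>i}S^{(j)}\bigr)^{\complement}$, namely its $\cA_{\kappa_i}$-component under the decomposition $\cC_{\kappa_i}=\langle\cG_{\kappa_i},\cA_{\kappa_i}\rangle$, is annihilated by the convolution, and to read this off from an explicit acyclicity on the fixed locus $Z^{(i)}$.

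First I would place $\cT^{(i)}|_{U_i}$ inside $\cC_{\kappa_i}$. By Lemma~\ref{lem:win}, each $p_*\cO_{\ddI_{k-l}}|_{U_i}$ with $0\le l\le i$ lies in $\cC_{\kappa_i}$; since $\cC_{\kappa_i}$ is a triangulated (indeed thick) subcategory — it is cut out by the closed condition $\sigma_i^*(-)\in\MF_{G_k\times L_i\times\bC^{\times}}(Z^{(i)},\upsilon_k)_{[\kappa_i,\kappa_i+\eta_i]}$, which is stable under shifts, cones and extensions — the iterated cone $\cT^{(i)}|_{U_i}$ lies in $\cC_{\kappa_i}$ as well. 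By Theorem~\ref{thm:SOD} there is then a unique triangle $E\to\cT^{(i)}|_{U_i}\to G\xrightarrow{+1}$ with $E\in\cA_{\kappa_i}$ and $G\in\cG_{\kappa_i}$, and the proposition is exactly the assertion $E=0$.

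Next I would convert $E=0$ into a vanishing on $Z^{(i)}$. Here $\kappa_i=0$ and $\eta_i=ik$, the inverse of the equivalence $q_{i,*}\pi_i^*\colon\MF_{G_k\times L_i\times\bC^{\times}}(Z^{(i)},\upsilon_k)_{\kappa_i}\xrightarrow{\sim}\cA_{\kappa_i}$ is $(\sigma_i^*(-))_{ik}\otimes\omega_{q_i}$, and every object of $\cG_{\kappa_i}$ has $\sigma_i^*$ concentrated in $\gamma_i$-weights in $[\,0,ik)$. Applying the exact weight functor $(\sigma_i^*(-))_{ik}$ to the triangle above yields $(\sigma_i^*E)_{ik}\xrightarrow{\sim}(\sigma_i^*\cT^{(i)})_{ik}$, so $E=0$ if and only if $(\sigma_i^*\cT^{(i)})_{ik}=0$. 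As $\sigma_i^*$ and $(-)_{ik}$ are exact and carry Postnikov systems to Postnikov systems, $(\sigma_i^*\cT^{(i)})_{ik}$ is the convolution of the complex of sheaves on $Z^{(i)}$
\[
\bigl(\sigma_i^* p_*\cO_{\ddI_{k-i}}\bigr)_{ik}\longrightarrow\bigl(\sigma_i^* p_*\cO_{\ddI_{k-i+1}}\bigr)_{ik}\longrightarrow\cdots\longrightarrow\bigl(\sigma_i^* \cO_{\ddI_k}\bigr)_{ik},
\]
with differentials induced by the $d_l$ (the shifts $\langle -l^2-l\rangle$ do not affect $\gamma_i$-weights). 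So the proposition reduces to showing that this complex has vanishing convolution; note that its leftmost term is nonzero, by the case $l=i$ of Lemma~\ref{lem:win}, so this is not a vacuous claim.

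Finally I would carry out this computation via the Lascoux resolutions $\cF_l^{\bullet}$ of $p_*\cO_{\ddI_{k-l}}$ from Lemma~\ref{lem:resolni}, whose terms are sums of $\bS^{\mu}V_k^{\vee}\otimes\bS^{\la}V_k'$ with $k\ge\la_1\ge\cdots\ge\la_k\ge0$. Splitting $V_k'|_{Z^{(i)}}$ into its $\gamma_i$-weight-$0$ summand (rank $k-i$) and its weight-$1$ summand (rank $i$), the weight-$ik$ part of $\sigma_i^*(\bS^{\mu}V_k^{\vee}\otimes\bS^{\la}V_k')$ vanishes unless $\la_1=\cdots=\la_i=k$, and when $\la_1=\cdots=\la_i=k$ it equals $\bS^{\mu}V_k^{\vee}$ tensored with the $k$-th power of the determinant of the weight-$1$ summand and the Schur functor $\bS^{(\la_{i+1},\dots,\la_k)}$ of the weight-$0$ summand. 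Reading the displayed complex off the $\cF_l^{\bullet}$ in this way exhibits it, up to a fixed determinant twist, as an explicit complex built from cohomologies of Grassmannians, and one must verify that the Rickard differentials — assembled from the inclusions of $\be$ and $\bbf$ into the bottom graded pieces of the relevant Grassmannian-cohomology decompositions followed by the counits — make this complex convolve to zero. Establishing this acyclicity, which is precisely the mechanism by which the convolution "iteratively eliminates" everything of top $\gamma_i$-weight, is the main obstacle; I expect the argument to mirror closely the degeneration computations of \cite{CKL-sl2} and \cite{Cautis}. The case $i=1$ is the same argument with $\cT^{(0)}=\cO_{\ddI_k}$, which completes the proof.
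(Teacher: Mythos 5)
Your reduction is correct and matches the paper's framework: $\cT^{(i)}|_{U_i}$ does lie in $\cC_{\kappa_i}$ by Lemma~\ref{lem:win} and thickness, and membership in $\cG_{\kappa_i}$ is indeed equivalent to the vanishing of the top $\gamma_i$-weight piece $(\sigma_i^*\cT^{(i)})_{ik}$, i.e.\ of the $\cA_{\kappa_i}$-component. The problem is that you stop exactly where the proof begins. The entire content of the proposition is the acyclicity of the weight-$ik$ complex you write down, and you explicitly defer it (``establishing this acyclicity \dots is the main obstacle''). Nothing in your proposal controls the differentials $d_l$ after restriction to $Z^{(i)}$ and projection to weight $ik$, so you cannot rule out that the convolution of that complex is nonzero; in fact its individual terms are nonzero, so the vanishing is a genuine cancellation that must be exhibited, not an exercise.

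For comparison, the paper's proof supplies precisely this missing content, and by a somewhat different mechanism than the termwise restriction of the resolutions $\cF_l^\bullet$ that you sketch. It computes the $\cA_{\kappa_i}$-component of each term $p_*\cO_{\ddI_{k-l}}$ \emph{geometrically}: since $p(\ddI_{k-i}|_U)$ lands in $S^{(i)}$, one forms the fibre product $Q$ of $S^{(i)}$ with $\ddI_{k-l}|_U$, and a Lascoux resolution of $\tilde p_*\cO_Q$ identifies $q_{i,*}\pi_i^*\bigl((\sigma_i^*p_*\cO_{\ddI_{k-l}})_{\kappa_i+\eta_i}\otimes\omega_{q_i}\bigr)$ with $\bigoplus_r (p_*\cO_{\ddI_{k-i}})^{\oplus b_{l,r}}$ with degree shifts indexed by $H^*(\Gr(i-l,i))$. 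The differentials between these copies are then pinned down (up to scalar) by the one-dimensionality of the relevant equivariant Hom space (Remark~\ref{rmk:d!}), and a first-column bijection on Young diagrams cancels the copies in pairs, leaving a single copy $p_*\cO_{\ddI_{k-i}}\langle -i^2-1\rangle$ whose counit map into $\cT^{(i-1)}$ is identified with $d_i$ by the same uniqueness; hence $\cT^{(i)}=\on{Cone}(d_i)$ is the $\cG_{\kappa_i}$-projection of $\cT^{(i-1)}$. If you want to complete your version, you would need an analogue of each of these three steps (the $H^*(\Gr(i-l,i))$-decomposition of the weight-$ik$ pieces, the identification of the induced differentials, and the combinatorial cancellation); appealing to ``degeneration computations of \cite{CKL-sl2} and \cite{Cautis}'' is not a substitute, since those are carried out on the cotangent-bundle side and transporting them through $\Psi_k$ and $\sigma_i^*$ is exactly what is at issue here.
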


\begin{proof}
    Everything in this proof is pulled back to the open subset $U=(\bigcup_{j>i}S^{(j)})^{\complement}$.
    Occasionally, we will omit the restriction or pullback symbol $(\cdot)|_{U}$.

    For any point $(\tau,\beta_{k},\epsilon_{k},\epsilon_{k}^{\prime})$ of $S^{(i)}$, the image of $\beta_{k}^{\vee}$ generates a $(k-i)$-dimensional $\bC[(\epsilon_{k}^{\prime})^{\vee}]$-submodule of $(V_{k}^{\prime})^{\vee}$. 
    On the other hand, the intertwining condition $\beta_{k}\epsilon_{k}^{\prime} = \epsilon_{k} \beta_{k}$ implies that the image of $\beta_{k}^{\vee}$ is preserved by $(\epsilon_{k}^{\prime})^{\vee}$. 
    Thus, the partial resolution $p=p_{k-i}:\ddI_{k-i}|_{U} \to U$ factors through the stratum $q_{i}: S^{(i)} \hookrightarrow {U}$ via the base change map $p|_{S^{(i)}}:\ddI_{k-i}|_{U}\to S^{(i)}$.
    It is actually isomorphic over its image since we can recover $V_{k-i}$ as the image of $\beta_{k}$. 
    Consider the scheme-theoretic fibre product of $S^{(i)}$ and $\ddI_{k-l}|_{U}$ over $U$
    \begin{equation}\label{eq:Q}
        \begin{tikzcd}
            Q \arrow[r, "\tilde{q}"]  \arrow[d,"\tilde{p}"]  &  
            \ddI_{k-l}|_{U}  \arrow[dd,"p_{k-l}"] \\
            \ddI_{k-i}|_{U} \arrow[d, "p|_{S^{(i)}}"] & 
             \\
            S^{(i)}  \arrow[r,hook,"q_{i}"] &
            U,
        \end{tikzcd}
    \end{equation}
    where the left vertical arrow factors through $\ddI_{k-i}|_{U}$ because of the coincidence of the supports of $p_{k-l}(\ddI_{k-l}) \cap S^{(i)}$ and  $p(\ddI_{k-i}|_{U})$.
    The fibre product $Q$ is a correspondence between $\ddI_{k-i}|_{U}$ and $\ddI_{k-l}|_{U}$ with natural projections $\tilde{p}$ and $\tilde{q}$.
    It can be identified with the locally closed subvariety
        \begin{align*}
            & Q = \left\{
            (\alpha_{k-i},\alpha_{k-l},\beta_{k-i},\tau,\epsilon_{k-i},\epsilon_{k-l},\epsilon_{k},\epsilon_{k}^{\prime}) \mathrel{\bigg|}
            \begin{array}{c}
            \alpha_{k-i}, \alpha_{k-l}, \beta_{k-i} \text{ of full rank,} \\ 
            \epsilon_{k}|_{V_{k-l}} = \epsilon_{k-l},\, \epsilon_{k-l}|_{V_{k-i}} = \epsilon_{k-i},\\
            \beta_{k-i}\epsilon_{k}^{\prime}  = \epsilon_{k-i} \beta_{k-i}
        	\end{array}
            \right\}
        \end{align*}
    inside the affine space
    \begin{align*}
        \Hom(V_{k-i},V_{k-l})  & \oplus   \Hom(V_{k-l},V_{k}) \oplus \Hom(V_{k}^{\prime},V_{k-i}) \\    & \oplus  \Hom(V_{k},V_{k}^{\prime}) \oplus \gl(V_{k-i}) \oplus \gl(V_{k-l}) \oplus \gl(V_{k}) \oplus  \gl(V_{k}^{\prime}).
    \end{align*}
    Now, recall that we are interested in the summand 
    \begin{align*}
        (\sigma_{i}^{*} p_{*}\cO_{\ddI_{k-l}})_{\kappa_{i}+\eta_{i}} \otimes \omega_{q_{i}} & \cong (\sigma_{i}^{*} q_{i}^{!} p_{k-l,*}\cO_{\ddI_{k-l}})_{\kappa_{i}}.
    \end{align*}
    If the fibre product (\ref{eq:Q}) were taken in the derived sense (e.g., as a dg scheme), then the derived base change formula 
    \[
    q_{i}^{!} p_{k-l,*} \simeq (p|_{S^{(i)}} \circ \tilde{p})_{*} \tilde{q}^{!}
    \]
    is directly applicable. 
    However, for the purpose of computing the summand $(\cdot)_{\kappa_{i}}$, the classical truncation $Q$ is sufficient:
    as the higher Tor sheaves $\underline{\on{Tor}}_{\bullet}^{U}(\cO_{S^{(i)}}, \cO_{\ddI_{k-l}})$ can be computed by $\cO_{\ddI_{k-l}} \otimes \extp^{\bullet} N_{S^{(i)}/U}^{\vee}$ where the conormal sheaf $N_{S^{(i)}/U}^{\vee}$ has positive $\gamma_{i}$-weights, there is an isomorphism
    \begin{equation} \label{eq:sumd}
         (\sigma_{i}^{*} q_{i}^{!} p_{k-l,*}\cO_{\ddI_{k-l}})_{\kappa_{i}} \cong \left(\sigma_{i}^{*} (p|_{S^{(i)}} \circ \tilde{p})_{*} \cO_{Q}\langle -2i(i-l) \rangle \right)_{\kappa_{i}}. 
    \end{equation}
    Here we have used:
    (i) the formula $\omega_{\tilde{q}} \cong \cO_{Q}\langle -2i(i-l) \rangle$, which can be computed in the same way as in the proof of Proposition~\ref{prop:Dc};
    (ii) the fact that $(p|_{S^{(i)}} \circ \tilde{p})_{*} \cO_{Q}$ is a direct sum of shifts of $(p|_{S^{(i)}})_{*}\cO_{\ddI_{k-i}}$ (see (\ref{eq:pQ}) below), which only has $\gamma_{i}$-weight $\kappa_{i}$ when restricted from $S^{(i)}$ to $Z^{(i)}$ by Lemma~\ref{lem:win}.
    
    Next we calculate the direct image of $\cO_{Q}$ along the projection $\tilde{p}$, which is given by forgetting $V_{k-l}$ and composing $\alpha_{k-l}\circ \alpha_{k-i}$.
    Let $T$ be the parabolic subalgebra of $\epsilon_{k}\in \gl(V_{k})$ that preserves $\alpha_{k-i}:V_{k-i}\hookrightarrow V_{k}$, and $U\subset T$ the subspace that further preserves the flag $V_{k-i} \hookrightarrow V_{k-l} \hookrightarrow V_{k}$. 
    By applying Theorem~\ref{thm:torsion}, we can resolve the torsion sheaf $\tilde{p}_{*}\cO_{Q}$ by the Lascoux resolution
    \[
    \cF^{-n} =\bigoplus_{r\geq n} H^{r-n} \left( \Gr(V_{k-l}/V_{k-i}, V_{k}/V_{k-i}), \extp^{r} (V_{k-l}/V_{k-i}) \otimes (V_{k}/V_{k-l})^{\vee} \langle -2r\rangle \right). 
    \]
    This is exactly the de Rham cohomology of $\Gr(i-l,i)$, so  
    \begin{equation}\label{eq:pQ}
        \tilde{p}_{*}\cO_{Q} \cong \bigoplus_{r =0}^{l(i-l)} (\cO_{\ddI_{k-i}})^{\oplus b_{l,r}} \langle -2r \rangle, 
    \end{equation}
    where $b_{l,r}$ is the Betti number $\dim H^{2r}(\Gr(i-l,i))$ (note that it is also indexed by Young diagrams of $r$ boxes with at most $i-l$ rows and $l$ columns). 
    Thus, the summand (\ref{eq:sumd}) is isomorphic to a direct sum of shifts of $(\sigma_{i}^{*}(p|_{S^{(i)}})_{*}\cO_{\ddI_{k-i}})_{\kappa_{i}}$, with multiplicities and degrees parametrized by the cohomology ring of $\Gr(i-l,i)$. 
    In Lemma~\ref{lem:win}, we have shown $p_{*}\cO_{\ddI_{k-i}}\in \cA_{\kappa_{i}}$. 
    So, the later summand just mentioned is the inverse of $p_{*}\cO_{\ddI_{k-i}}$ under the equivalence (\ref{eq:ZA}), namely
    \begin{equation*}
        p_{*}\cO_{\ddI_{k-i}} \cong q_{i,*}\pi_{i}^{*} (\sigma_{i}^{*}p_{*}\cO_{\ddI_{k-i}})_{\kappa_{i}} 
        \cong q_{i,*}\pi_{i}^{*} (\sigma_{i}^{*}(p|_{S^{(i)}})_{*}\cO_{\ddI_{k-i}})_{\kappa_{i}}. 
    \end{equation*}
    We are now ready to spell out the counit (\ref{eq:mut}) on the convolution
    \[
    \cT^{(i-1)} = \on{Conv}\left( p_{*}\cO_{\ddI_{k-i+1}}\langle -(i-1)^{2} -(i-1) \rangle \xlongrightarrow{d_{i-1}} \cdots \xlongrightarrow{d_{l+1}}  p_{*}\cO_{\ddI_{k-l}}\langle -l^{2} -l \rangle \xlongrightarrow{d_{l}} \cdots  \right).
    \]
    For each $l\geq i$, we have 
    \begin{align}
        & q_{i,*}\pi_{i}^{*}( (\sigma_{i}^{*}p_{*}\cO_{\ddI_{k-l}}\langle -l^{2}-l \rangle )_{\kappa_{i}+\eta_{i}} \otimes \omega_{q_{i}} ) \notag \\
        \cong &  
        \bigoplus_{r =0}^{l(i-l)} (p_{*}\cO_{\ddI_{k-i}})^{\oplus  b_{l,r}}  \langle -l^{2}-l- 2i(i-l)- 2r  \rangle. \label{eq:copy}
    \end{align}
    On the quasi-symmetric model $\ddR_{k}$ (which is an affine space), the $\bC^{\times}$-equivariant differentials $d_{l}$ are given by maps between these copies of $p_{*}\cO_{\ddI_{k-i}}$ with matching degree shifts. 
    By the uniqueness (\ref{eq:d!}), they must be identities up to scalar multiples. 
    More specifically, each copy of $p_{*}\cO_{\ddI_{k-i}}$ in $(p_{*}\cO_{\ddI_{k-i}})^{\oplus  b_{l,r}}$ corresponds to a Schubert cycle in $H^{2r}(\Gr(i-l,i))$, indexed by a Young diagram $\la = (\la_{1},\cdots,\la_{i-l})$ with $l \geq \la_{1} \geq \cdots \geq \la_{i-l} \geq 0$.
    When the first column of $\la$ has $i-l$ boxes (i.e.\ $\la_{i-l}>0$), there is a unique Schubert cycle in $H^{2(r-i+l)}(\Gr(i-l+1,i))$, which corresponds to the Young diagram $(\la_{1}-1,\cdots,\la_{i-l}-1,0)$ obtained from  $\la$ by deleting its first column.    
    Conversely, when $\la_{i-l}=0$, there is a unique cycle in $H^{2(r+i-l-1)}(\Gr(i-l-1,i))$, which corresponds to the Young diagram $(\la_{1}+1,\cdots,\la_{i-l-1}+1)$. 
    Deleting the first column of this later Young diagram gives back $\la$. 
    By a simple calculation, one can see that the copies of $p_{*}\cO_{\ddI_{k-i}}$  corresponding to these Young diagrams have matching degrees in (\ref{eq:copy}), i.e.\ 
    \begin{align*}
        -l^{2} - l -2i(i-l) -2r & = -(l-1)^{2} - (l-1) -2i(i-l+1) -2(r-i+l)\\
        & =  -(l+1)^{2} - (l+1) -2i(i-l-1) -2(r+i-l-1). 
    \end{align*}
    After cancelling these summands in the convolution, we are left with only one copy 
    \[
    q_{i,*}\pi_{i}^{*}( (\sigma_{i}^{*} \cT^{(i-1)} )_{\kappa_{i}+\eta_{i}} \otimes \omega_{q_{i}}) \cong p_{*}\cO_{\ddI_{k-i}}\langle -i^{2} -1 \rangle,
    \]
    which corresponds exactly to $H^{0}(\Gr(1,i))$,
    and the mutation (\ref{eq:mut}) reads as 
    \[
    p_{*}\cO_{\ddI_{k-i}}\langle -i^{2} -1 \rangle \xlongrightarrow{\varepsilon} \cT^{(i-1)} \longrightarrow \on{Cone}(\varepsilon). 
    \]
    Compare it to the exact triangle
    \[
    p_{*}\cO_{\ddI_{k-i}}\langle -i^{2} -1 \rangle  \xlongrightarrow{d_{i}} \cT^{(i-1)} \longrightarrow \cT^{(i)}
    \]
    in the Postnikov system (\ref{eq:Pos}), we claim $\cT^{(i)} \in\cG_{\kappa_{i}}$ by the uniqueness (\ref{eq:d!}) of $d_{i}$. 
\end{proof}

\begin{cor}
    The convolution $\cT^{(k)}$ 
    satisfies the grade restriction rule. 
\end{cor}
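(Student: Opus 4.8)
The plan is to deduce the grade restriction rule for $\cT^{(k)}$ directly from Proposition~\ref{prop:main}, verifying the condition one index $i$ at a time by a support (vanishing) argument. Recall that, with the choice $\kappa_{\bullet}=(0,\dots,0)$, the convolution $\cT^{(k)}$ satisfies the grade restriction rule exactly when
\[
\on{wt}_{\gamma_{i}}\sigma_{i}^{*}\cT^{(k)}\subseteq[\kappa_{i},\kappa_{i}+\eta_{i})=[0,ik)\qquad\text{for every }i=1,\dots,k .
\]
Fix such an $i$ and set $U_{i}=(\bigcup_{j>i}S^{(j)})^{\complement}$, an open subvariety of $\ddR_{k}$ through which the fixed-locus inclusion $\sigma_{i}\colon Z^{(i)}\hookrightarrow\ddR_{k}$ factors. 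Then $\sigma_{i}^{*}\cT^{(k)}\cong\sigma_{i}^{*}(\cT^{(k)}|_{U_{i}})$, so it suffices to prove that $\cT^{(k)}|_{U_{i}}$ lies in the subcategory $\cG_{\kappa_{i}}\subset\MF_{G_{k}^{2}\times\bC^{\times}}(U_{i},\upsilon_{k})$.

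The first step is to identify $\cT^{(k)}|_{U_{i}}$ with $\cT^{(i)}|_{U_{i}}$. By construction $\cT^{(k)}$ is obtained from $\cT^{(i)}$ through the remaining triangles of the Postnikov system~(\ref{eq:Pos}),
\[
p_{*}\cO_{\ddI_{k-l}}\langle-l^{2}-1\rangle\xrightarrow{d_{l}}\cT^{(l-1)}\longrightarrow\cT^{(l)},\qquad l=i+1,\dots,k .
\]
For each such $l$ we have $l>i$, and the sheaf $p_{*}\cO_{\ddI_{k-l}}$ is supported on $\ddot{C}_{\leq k-l}\subseteq\on{cl}(S^{(l)})=\bigcup_{j\geq l}S^{(j)}\subseteq\bigcup_{j>i}S^{(j)}$, as recorded in \S\ref{sec:3}; hence $p_{*}\cO_{\ddI_{k-l}}|_{U_{i}}=0$, and restricting the triangle above to $U_{i}$ turns the map $\cT^{(l-1)}|_{U_{i}}\to\cT^{(l)}|_{U_{i}}$ into an isomorphism. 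Composing these for $l=i+1,\dots,k$ gives $\cT^{(i)}|_{U_{i}}\cong\cT^{(k)}|_{U_{i}}$. Proposition~\ref{prop:main} asserts precisely that $\cT^{(i)}|_{U_{i}}\in\cG_{\kappa_{i}}$, so $\cT^{(k)}|_{U_{i}}\in\cG_{\kappa_{i}}$, which is the $i$-th grade restriction inequality. Letting $i$ range over $1,\dots,k$ completes the argument.

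I do not expect a genuine obstacle here: all of the substantive work — the iterative cancellation of the out-of-window summands using the uniqueness~(\ref{eq:d!}) of the differentials $d_{l}$ — is already contained in Proposition~\ref{prop:main}. The one point that deserves a little care is that a convolution is defined only up to (non-canonical) isomorphism, so the vanishing argument should be read as taking place inside a fixed Postnikov system realizing $\cT^{(k)}$ rather than as a statement about an intrinsically canonical object; since membership in $\cG_{\kappa_{i}}$ is invariant under isomorphism, this causes no trouble.
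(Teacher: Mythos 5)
Your proof is correct and is essentially the paper's argument: the paper's own proof of this corollary is exactly the observation that $p_{*}\cO_{\ddI_{k-j}}$ for $j>i$ is supported on $\bigcup_{j>i}S^{(j)}$, so $\cT^{(k)}$ and $\cT^{(i)}$ agree on the complement and Proposition~\ref{prop:main} applies. Your version merely spells out the vanishing of the higher Postnikov triangles upon restriction, which is a harmless elaboration.
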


\begin{proof}
    For each $i>0$, the sheaves $p_{*}\cO_{\ddI_{k-j}}$ for $j>i$ are supported on $\bigcup_{j>i}S^{(j)}$. 
    Hence, the restriction of $\cT^{(k)}$ to $(\bigcup_{j>i}S^{(j)})^{\complement}$ is  same as that of $\cT^{(i)}$, which lies in $\cG_{\kappa_{i}}$ by Proposition~\ref{prop:main}. 
\end{proof}

The quasi-symmetric model $\ddR_{k}$ is a representation of $\GL(V_{k}^{\prime})$. 
By applying the magic window theory (\S\ref{sec:magic}) to this $G_{k}$-action (and equivariantly with respect to the other $G_{k}$), we obtain a magic window subcategory 
\[
\mathcal{M}_{[0,k)} \subset \MF_{G_{k}^{2}\times \bC^{\times}}(\ddR_{k},\upsilon_{k})
\]
of equivariant matrix factorizations whose components are generated by $\bS^{\la}V_{k}^{\prime}$ with $\la_{i}\in [0,k)$. 
As a result of the general theory, this magic window coincides with our grade restriction window
\[\mathcal{M}_{[0,k)} = \cG_{\kappa_{\bullet}}, \]
see, e.g., \cite[Proposition 6.1.6]{Toda} and references therein. 

\begin{cor}\label{cor:resoln}
    The convolution $\cT^{(k)}$ has a locally free resolution $\cF^{\bullet}$ over $\ddR_{k}$, whose terms are direct sums of Schur functors $\bS^{\mu}V_{k}^{\vee} \otimes \bS^{\la}V_{k}^{\prime}$ where the highest weight $\la$ satisfies $k > \la_{1} \geq \cdots \geq \la_{k} \geq 0$. 
\end{cor}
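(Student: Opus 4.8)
The plan is to read off the corollary as a formal consequence of the Corollary just proved and the identification $\cG_{\kappa_{\bullet}} = \mathcal{M}_{[0,k)}$. Since $\cT^{(k)}$ satisfies the grade restriction rule at every stratum, it lies in $\cG_{\kappa_{\bullet}}$, hence in the magic window subcategory $\mathcal{M}_{[0,k)} \subset \MF_{G_{k}^{2}\times\bC^{\times}}(\ddR_{k},\upsilon_{k})$. By definition, an object of $\mathcal{M}_{[0,k)}$ is quasi-isomorphic, as a matrix factorization over the affine space $\ddR_{k}$, to one whose components are direct sums of the generating equivariant vector bundles; I would take such a representative to be $\cF^{\bullet}$. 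Because the window $[0,k)$ only constrains the $\GL(V_{k}^{\prime})$-weights while we retain the full $(G_{k}\times G_{k}\times\bC^{\times})$-equivariance, those generators are precisely the bundles $\bS^{\mu}V_{k}^{\vee}\otimes\bS^{\la}V_{k}^{\prime}\otimes\cO_{\ddR_{k}}$ (up to internal twists) with $\la$ dominant satisfying $k>\la_{1}\geq\cdots\geq\la_{k}\geq 0$ and $\mu$ arbitrary; here one uses that every rational $\GL(V_{k})$-representation decomposes into Schur functors $\bS^{\mu}V_{k}^{\vee}$. This is exactly the asserted form.

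To make the improvement visible, I would also record where the bound comes from. Convolving the locally free resolutions $\cF_{i}^{\bullet}$ of the individual terms $p_{*}\cO_{\ddI_{k-i}}$ from Lemma~\ref{lem:resolni} — lifting the differentials $d_{i}$ of the Postnikov system to these complexes, which is possible since each $\cF_{i}^{\bullet}$ is a bounded complex of equivariant vector bundles on the affine $\ddR_{k}$ — already produces \emph{some} locally free resolution of $\cT^{(k)}$, but only with the weaker bound $k\geq\la_{1}\geq\cdots\geq\la_{k}\geq 0$ inherited from Lemma~\ref{lem:resolni}. The strict inequality $\la_{1}<k$ is genuinely new information, and it is exactly what membership in $\mathcal{M}_{[0,k)}=\cG_{\kappa_{\bullet}}$ supplies: a summand $\bS^{\mu}V_{k}^{\vee}\otimes\bS^{\la}V_{k}^{\prime}$ with $\la_{1}=k$ contributes a $\gamma_{1}$-weight equal to $\eta_{1}=k$ upon restriction along $\sigma_{1}$, which lies outside the grade restriction window $[0,k)$ and is therefore incompatible with lying in $\cG_{\kappa_{\bullet}}$, so it cannot survive in a window representative.

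The main point to be careful about is therefore not a computation but a matter of principle: that ``$\cT^{(k)}$ lies in the abstractly defined window subcategory'' genuinely upgrades to ``$\cT^{(k)}$ has an explicit resolution with the stated Schur-functor terms'', and that the second $\GL(V_{k})$-equivariance — invisible to the window theory — is correctly accounted for. Both are formal here: a full subcategory of the derived category of matrix factorizations specified through the components of its objects is by construction closed under quasi-isomorphism, so membership already yields a representative of the right shape, and the $V_{k}^{\vee}$-factor is left unconstrained by the window exactly as it was in Lemmas~\ref{lem:resolni} and \ref{lem:win}. No new input beyond the material already assembled in \S\ref{sec:2} and~\S\ref{sec:3} should be required.
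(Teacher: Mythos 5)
Your proposal is correct and uses essentially the same ingredients as the paper's own (very terse) proof: the paper takes the convolution of the resolutions $\cF_{i}^{\bullet}$ from Lemma~\ref{lem:resolni} and then eliminates the $\la_{1}=k$ summands by resolving them in terms of the magic window generators, which is exactly the combination of the weak bound from Lemma~\ref{lem:resolni} with membership in $\cG_{\kappa_{\bullet}}=\mathcal{M}_{[0,k)}$ that you describe. Your added remarks on why the strict inequality $\la_{1}<k$ is precisely the content supplied by the grade restriction rule, and on the unconstrained $\bS^{\mu}V_{k}^{\vee}$ factor, are accurate.
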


\begin{proof}
    Take the convolution of the resolutions $\cF_{i}^{\bullet}$ for $i=0,\cdots,k$ from Lemma~\ref{lem:resolni}, and then resolve those $\bS^{\la}V_{k}^{\prime}$ with $\la_{1}=k$ by using the magic window generators of $\mathcal{M}_{[0,k)}$. 
    See \cite[\S 3.3]{HLS} for details on an algorithm to do this in practice. 
\end{proof}

The pullback of this resolution $\cF^{\bullet}$ along the flat morphism $\varphi:\dR_{k} \to \ddR_{k}$ (\S\ref{sec:2}) will be the one used in our proof of the main theorem in \S\ref{sec:1}.

\printbibliography

\vspace{1em}
\begin{flushleft}
    {\fontsize{9.5}{11}\selectfont Department of Mathematics, Imperial College, London, SW7 2AZ, United Kingdom \\
    \href{mailto:w.zhou21@imperial.ac.uk}{\texttt{w.zhou21@imperial.ac.uk}}}
\end{flushleft}

\end{document}